\documentclass[11pt,a4paper]{amsart}
\usepackage[margin=1in]{geometry}
\usepackage{amsmath,amssymb,amsthm}
\usepackage{booktabs}
\usepackage[colorlinks,linkcolor=blue,citecolor=blue]{hyperref}
\usepackage{graphicx}
\usepackage{float}

\newtheorem{thm}{Theorem}[section]
\numberwithin{thm}{section}
\newtheorem{lemma}[thm]{Lemma}
\newtheorem{remark}[thm]{Remark}
\newtheorem{proposition}[thm]{Proposition}
\newtheorem{corollary}[thm]{Corollary}
\numberwithin{equation}{section}
\numberwithin{table}{section}

\DeclareMathOperator{\Area}{Area}
\DeclareMathOperator{\Vol}{Vol}
\newcommand{\RR}{\mathbb{R}}
\newcommand{\PP}{\ensuremath{\mathrm{P}}}
\newcommand{\DD}{\ensuremath{\mathrm{D}}}
\newcommand{\Tet}{\mathcal{Q}_{\mathrm{tet}}}
\newcommand{\Oct}{\mathcal{Q}_{\mathrm{oct}}}
\newcommand{\KK}{K\!\left(\tfrac14\right)}
\newcommand{\KKp}{K\!\left(\tfrac34\right)}
\newcommand{\At}{\widetilde{A}_{3}}
\newcommand{\Bt}{\widetilde{B}_{3}}
\newcommand{\Ct}{\widetilde{C}_{3}}
\newcommand{\Qu}{\mathcal{T}}        

\title[Two Smyth disks in a tetragonal disphenoid]
{Schwarz \PP{} and Schwarz \DD{} as free--boundary minimal disks\\
in a tetragonal disphenoid: exact areas}
\author{Steven Finch}
\address{MIT Sloan School of Management, Cambridge, MA 02139, USA}
\email{steven\_finch\_math@outlook.com}
\date{\today}
\subjclass[2020]{Primary 53A10; Secondary 49Q05, 33E05, 20F55}
\keywords{Schwarz D surface, Schwarz P surface, conjugate minimal surface,
tetragonal disphenoid, Smyth disk, free boundary, affine Weyl group,
elliptic integral}

\begin{document}
\maketitle

\begin{abstract}
Let $T$ be the tetragonal disphenoid whose four congruent faces are
isosceles triangles with sides $2,\sqrt3,\sqrt3$, so that the dihedral
angles are $\pi/2$ along one opposite pair of edges and $\pi/3$ along the
remaining four. By Smyth's theorem $T$ contains three embedded minimal
disks meeting $\partial T$ orthogonally; the symmetry group $D_{2d}$ of
$T$ fixes one and interchanges the other two. We identify all three
completely. The distinguished disk is a patch of \textbf{Schwarz's
\PP{} surface} and the congruent pair are patches of \textbf{Schwarz's
\DD{} surface}, with exact areas
\[
\Area(\Sigma^{*}_{A})=\frac{\KK}{\KKp}=0.781700961348056\ldots,
\
\Area(\Sigma^{*}_{B})=\frac{\KKp}{2\,\KK}=0.639630785585503\ldots
\]
in the normalization $\Vol(T)=2/3$, inradius $r_T=\sqrt2/4$. In
particular $2\,\Area(\Sigma^{*}_{A})\Area(\Sigma^{*}_{B})=1$, which we
show is the Bonnet relation for the adjoint pair $(\DD,\PP)$. The proof
is a reversal of the usual strategy: rather than uniformizing the Gauss
map, we identify Smyth's three Plateau contours as the \emph{tetrahedral
four edges} and the \emph{octahedral four edges} --- classically solved
by \DD{} and \PP{} respectively --- and then conjugate. Both areas agree
with Surface Evolver computations of Ken~Brakke to all $15$ reported
digits. We also show that all twelve corners of the three disks are
midpoints of edges of $T$, each of the six midpoints being used twice,
and that $T$ is precisely the alcove of the affine Weyl group $\widetilde{A}_{3}$, 
confirming a remark of Gandy, Cvijovi\'c, et al. (1999).
\end{abstract}

\section{Introduction and statement of results}

\subsection{The disphenoid}
Throughout, $T\subset\RR^{3}$ is the tetrahedron with vertices
\begin{equation}\label{eq:verts}
v_{1}=\left(1,0,\tfrac12\right),\quad
v_{2}=\left(-1,0,\tfrac12\right),\quad
v_{3}=\left(0,1,-\tfrac12\right),\quad
v_{4}=\left(0,-1,-\tfrac12\right).
\end{equation}
Then $|v_{1}v_{2}|=|v_{3}v_{4}|=2$ and the remaining four edges have
length $\sqrt3$, so each face is an isosceles triangle with sides
$2,\sqrt3,\sqrt3$ and area $\sqrt2$; $T$ is a tetragonal disphenoid
(isohedral, four congruent faces). Its centroid and incenter are both
the origin, and
\begin{equation}\label{eq:Tdata}
\Vol(T)=\frac23,\qquad
S(T)=4\sqrt2,\qquad
r_{T}=\frac{3\Vol(T)}{S(T)}=\frac{1}{2\sqrt2}=\frac{\sqrt2}{4}.
\end{equation}
The outward face normals are
\begin{equation}\label{eq:nrms}
\nu_{1}=\tfrac{1}{\sqrt2}(-1,0,-1),\quad 
\nu_{2}=\tfrac{1}{\sqrt2}(1,0,-1), \quad 
\nu_{3}=\tfrac{1}{\sqrt2}(0,-1,1),\quad 
\nu_{4}=\tfrac{1}{\sqrt2}(0,1,1),
\end{equation}
labelled so that $\nu_{i}$ is normal to the face $\varphi_{i}$ opposite vertex $v_{i}$; explicitly
$\nu_{1}\leftrightarrow v_{2}v_{3}v_{4}=\varphi_{1}$,
$\nu_{2}\leftrightarrow v_{1}v_{3}v_{4}=\varphi_{2}$,
$\nu_{3}\leftrightarrow v_{1}v_{2}v_{4}=\varphi_{3}$,
$\nu_{4}\leftrightarrow v_{1}v_{2}v_{3}=\varphi_{4}$.
One has $\sum_{i}\nu_{i}=0$ and
\begin{equation}\label{eq:gram}
\nu_{1}\!\cdot\!\nu_{2}=\nu_{3}\!\cdot\!\nu_{4}=0,
\qquad
\nu_{i}\!\cdot\!\nu_{j}=-\tfrac12\ \ \text{for all four cross pairs.}
\end{equation}
Since the dihedral angle $\delta_{ij}$ along the edge shared by faces
$i,j$ satisfies $\nu_{i}\!\cdot\!\nu_{j}=-\cos\delta_{ij}$, we read off
$\delta_{12}=\delta_{34}=\pi/2$ (the two edges of length $2$) and
$\delta_{ij}=\pi/3$ (the four edges of length $\sqrt3$). In the notation
of \cite{FinchReg} this is the member
\begin{equation}
\beta=1-\frac{\delta_{1}}{\pi}=\frac23,
\qquad \mu=1,
\qquad \cos\pi\beta=-\frac12,
\end{equation}
of the one-parameter family of disphenoids, i.e.,\ the endpoint
$\beta=2/3$ left unresolved there.

\subsection{Smyth's construction} \label{subsec:smyth}
Smyth \cite{Smyth} proved that every tetrahedron contains three embedded
minimal disks meeting the boundary orthogonally, one for each of the three
ways of arranging the four face-normals into a closed quadrilateral. They 
are obtained as follows. Put $c_{i}=\Area(\varphi_{i})$ for the four faces;
then $\sum_{i}c_{i}\nu_{i}=0$, so the four vectors $c_{i}\nu_{i}$ close
up into a skew quadrilateral $Q$ once an ordering is chosen. There are
three orderings up to rotation and reflection of the cycle. For each,
one solves the Plateau problem in $Q$, obtaining a disk $\Sigma$ bounded
by four straight segments, and takes the conjugate (adjoint) surface
$\Sigma^{*}$. By the boundary dictionary --- conjugation sends a
straight boundary line of direction $v$ to a planar geodesic in a plane
of normal $v$, met orthogonally --- $\Sigma^{*}$ has four planar free
boundary arcs lying in four planes with normals $\nu_{i}$, hence in the
faces of a tetrahedron homothetic to $T$. Smyth also proved the area
formula used repeatedly below:
\begin{equation}\label{eq:smytharea}
\Area=\tfrac12\,\mathcal{P}\,\rho,
\end{equation}
$\mathcal{P}$ the perimeter of the free boundary and $\rho$ the inradius
of the ambient tetrahedron.

For $T$ all four faces have equal area $c=\sqrt2$, so all three
quadrilaterals are \emph{equilateral with side $\sqrt2$}. Since the
interior angle of $Q$ at the vertex between consecutive edge vectors
$c\nu_{i},c\nu_{j}$ equals $\pi-\angle(\nu_{i},\nu_{j})$, equation
\eqref{eq:gram} gives interior angles $\pi/3$ when
$\nu_{i}\!\cdot\!\nu_{j}=-\tfrac12$ and $\pi/2$ when
$\nu_{i}\!\cdot\!\nu_{j}=0$.

\subsection{Terminology} \label{subsec:terminology}
Following \cite{FinchDP} we use two names, and we deliberately avoid the
phrase ``regular skew quadrilateral,'' which is ambiguous precisely
because \emph{both} of the following contours are equilateral.

\medskip\noindent
\textbf{The tetrahedral four edges} $\Tet$ is the closed polygon
\begin{equation}\label{eq:tet}
(0,0,0)\to(1,0,1)\to(1,1,0)\to(0,1,1)\to(0,0,0),
\end{equation}
four of the six edges of a regular tetrahedron inscribed in the unit
cube. All sides are $\sqrt2$, all interior angles are $\pi/3$, and both
diagonals are $\sqrt2$; equivalently all six mutual distances of the four
vertices are equal.

\medskip\noindent
\textbf{The octahedral four edges} $\Oct$ is the closed polygon
\begin{equation}\label{eq:oct}
\left(0,0,\tfrac12\right)\to\left(\tfrac12,-\tfrac12,\tfrac12\right)
\to\left(\tfrac12,0,1\right)\to\left(\tfrac12,\tfrac12,\tfrac12\right)
\to\left(0,0,\tfrac12\right),
\end{equation}
four edges of a regular octahedron. All sides are $1/\sqrt2$; the
interior angles alternate $\pi/2,\pi/3,\pi/2,\pi/3$; the diagonals are
$1/\sqrt2$ and $1$.

\medskip\noindent
The classical solutions, with the areas as computed in \cite{GKMK,GK}, are
\begin{align}
\Area(\text{Plateau disk in }\Tet)&=\frac{\KKp}{\KK}=1.2792615711\ldots
&&\text{a patch of }\DD,\label{eq:areatet}\\
\Area(\text{Plateau disk in }\Oct)&=\frac12\,\frac{\KK}{\KKp}
=0.3908504806\ldots &&\text{a patch of }\PP.\label{eq:areaoct}
\end{align}
Here $K(m)$ is the complete elliptic integral of the first kind with
parameter $m$ (Mathematica's \texttt{EllipticK}); we abbreviate
$K=\KK$, $K'=\KKp$. It is convenient to record the scale-invariant
shape constants
\begin{equation}\label{eq:kappas}
\kappa_{Q}:=\frac{\Area(\text{Plateau disk in }Q)}{(\text{side of }Q)^{2}},
\
\kappa_{\rm tet}=\frac{K'}{2K}=0.6396307856\ldots,
\
\kappa_{\rm oct}=\frac{K}{K'}=0.7817009613\ldots
\end{equation}
and to note the relation, used in \S\ref{sec:product},
\begin{equation}\label{eq:kprod}
\kappa_{\rm tet}\,\kappa_{\rm oct}=\frac12 .
\end{equation}

\subsection{Main theorem}

\begin{thm}\label{thm:main}
Let $T$ be as in \eqref{eq:verts} and let $\Sigma^{*}_{A}$,
$\Sigma^{*}_{B}$, $\Sigma^{*}_{C}$ be Smyth's three free-boundary
minimal disks, indexed so that $\Sigma^{*}_{A}$ is the one invariant
under the full symmetry group $D_{2d}$ of $T$. Then{\rm :}
\begin{enumerate}
\item[\textup{(i)}] The Plateau contour of $\Sigma^{*}_{A}$ is congruent
to the tetrahedral four edges $\Tet$ \textup{(}same scale\textup{)}, and
those of $\Sigma^{*}_{B},\Sigma^{*}_{C}$ are congruent to the octahedral
four edges $\Oct$ scaled by $2$.
\item[\textup{(ii)}] $\Sigma^{*}_{A}$ is a patch of the Schwarz
\PP{} surface, consisting of four of Gandy--Klinowski's fundamental
patches, in a simple cubic lattice of edge $2$.
\item[\textup{(iii)}] $\Sigma^{*}_{B}$ and $\Sigma^{*}_{C}$ are
congruent patches of the Schwarz \DD{} surface, each consisting of two
fundamental patches, in a face-centered cubic (fcc) lattice of conventional cube edge $4$.
\item[\textup{(iv)}] The areas are
\begin{equation}\label{eq:areas}
\Area(\Sigma^{*}_{A})=\frac{K}{K'}
=0.781700961348056\ldots,
\
\Area(\Sigma^{*}_{B})=\Area(\Sigma^{*}_{C})=\frac{K'}{2K}
=0.639630785585503\ldots
\end{equation}
\item[\textup{(v)}] The total curvatures are
$\int_{\Sigma^{*}_{A}}\!K\,dA=-\tfrac{2\pi}{3}$ and
$\int_{\Sigma^{*}_{B}}\!K\,dA=-\tfrac{\pi}{3}$.
\item[\textup{(vi)}] Each disk bisects $\Vol(T)$, and all twelve corners
are midpoints of edges of $T$, each of the six midpoints occurring in
exactly two of the three disks.
\end{enumerate}
\end{thm}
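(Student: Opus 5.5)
The plan is to run Smyth's construction of \S\ref{subsec:smyth} explicitly for the three cyclic orderings of $\nu_1,\dots,\nu_4$. Then read off the Plateau contours from \eqref{eq:gram}, identify the Plateau solutions with the classical ones \eqref{eq:areatet}, \eqref{eq:areaoct}, and transport everything through conjugation and a final homothety.

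\textbf{Contours, part (i).} The three orderings up to rotation and reflection are $(\nu_1\nu_3\nu_2\nu_4)$, $(\nu_1\nu_2\nu_3\nu_4)$ and $(\nu_1\nu_2\nu_4\nu_3)$.
\begin{itemize}
\item In the first ordering every consecutive pair is a cross pair, so by \eqref{eq:gram} all four interior angles are $\pi/3$ and all sides are $\sqrt2$. Both diagonals equal $|c\nu_1+c\nu_3|=|c\nu_3+c\nu_2|=\sqrt2$, so the contour is $\Tet$ at the same scale.
\item This ordering is the one preserved by $D_{2d}$. The symmetry group permutes the normals while preserving the partition $\{\{1,2\},\{3,4\}\}$. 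It therefore fixes the alternating cycle and swaps the other two, which justifies the labelling $A$.
\item In the remaining two orderings exactly two consecutive pairs are orthogonal pairs. The angles alternate $\pi/2,\pi/3$, and the diagonals are $\sqrt2$ and $2$. This is $\Oct$ scaled by $2$.
\end{itemize}
By uniqueness of the Plateau solution in these extremal polygons (each bounds a graph-like disk, by Rad\'o's theorem applied to a suitable projection), the Plateau disks are the classical \DD{} patch and the classical \PP{} patch. Conjugating gives (ii) and (iii), since \DD{} and \PP{} are adjoint.

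\textbf{Areas, part (iv), the main obstacle.} Conjugation is an isometry, so before rescaling the two conjugate disks have areas $K'/K$ and $4\cdot\frac12 K/K'=2K/K'$. The conjugate disk lies in a tetrahedron homothetic to $T$, but not equal to it. The real work is computing the homothety ratio $\lambda$, and the target answers force $\lambda_A=K/K'$ and $\lambda_B=K'/(2K)$.

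I would obtain $\lambda$ from the Bonnet/associate-family relation between the \DD{} and \PP{} lattice constants (Gandy--Klinowski). The steps are:
\begin{enumerate}
\item Locate the free boundary planes of the conjugate patch in the \PP{} (resp.\ \DD{}) lattice: they are the mirror planes through the patch.
\item Compute the inradius $\rho$ of the tetrahedron they bound, in units where the Plateau contour has side $\sqrt2$.
\item Rescale so that $\rho=r_T=\sqrt2/4$. This should give the \PP{} lattice of edge $2$ and the fcc \DD{} lattice of edge $4$.
\item As a consistency check, apply Smyth's formula \eqref{eq:smytharea} with the free-boundary perimeter. The perimeter itself is an elliptic integral, namely the length of a planar geodesic on \PP.
\end{enumerate}
The relation \eqref{eq:kprod} then gives $2\Area(\Sigma^*_A)\Area(\Sigma^*_B)=1$ for free.

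\textbf{Total curvature, part (v).} Gauss--Bonnet on the geodesic polygon gives $\int K\,dA=2\pi-\sum(\pi-\alpha_i)$. This equals $2\pi-4\cdot\frac{2\pi}{3}=-\frac{2\pi}{3}$ for $\Tet$ and $2\pi-2(\frac{\pi}{2}+\frac{2\pi}{3})=-\frac{\pi}{3}$ for $\Oct$. Total curvature is invariant under both conjugation and homothety, so these values pass directly to $\Sigma^*_A$ and $\Sigma^*_B$.

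\textbf{Corners and volume, part (vi).}
\begin{itemize}
\item \emph{Corners.} A vertex of $Q$ between edges $c\nu_i$ and $c\nu_j$ becomes a corner of $\Sigma^*$ lying on the edge $\varphi_i\cap\varphi_j$. Each Plateau contour has a $180^\circ$ rotation symmetry about a line through a vertex, or through a side midpoint, perpendicular to the appropriate diagonal. These Schwarz-reflection symmetries conjugate to order-two rotations of $T$ about axes through opposite edge midpoints. A corner fixed by such an axis on an edge of $T$ must be that edge's midpoint.
\item \emph{Counting.} Disk $A$ uses the four cross edges. Each of $B$ and $C$ uses both length-$2$ edges and two cross edges. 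Hence each of the six midpoints appears exactly twice.
\item \emph{Volume.} The same edge-midpoint rotation maps $\Sigma^*$ to itself and exchanges its two sides, so it bisects $\Vol(T)$.
\end{itemize}
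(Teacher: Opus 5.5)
Parts (i) and (v) are sound, and so is your forward identification of the Plateau solutions by Rad\'o uniqueness (for $\Oct$, project along the common perpendicular of its diagonals to get a rhombus). The genuine gap is (iv), which you call the main obstacle and then do not resolve. The ratios $\lambda_A=K/K'$ and $\lambda_B=K'/(2K)$ are back-solved from the desired answers. The lattice route you outline presupposes the scale relation between a \DD{} patch and the lattice of its adjoint \PP{} (and conversely), which you never state. ``This should give'' edges $2$ and $4$ is an expectation, not a computation. The missing idea is the one your step~4 throws away: the free-boundary perimeter is not an unknown elliptic integral. Conjugation is an isometry, so $|\partial\Sigma^{*}|=|\partial\Sigma|=4c=4\sqrt2$ exactly. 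Smyth's formula \eqref{eq:smytharea} then determines the inradius of the unrescaled ambient tetrahedron outright: $\rho=2\Area(\Sigma)/\mathcal{P}$, i.e.\ $\rho_A=K'/(2\sqrt2\,K)$ and $\rho_B=K/(\sqrt2\,K')$. Hence $\lambda=r_T/\rho$ equals $K/K'$, resp.\ $K'/(2K)$, and $\lambda^{2}\Area(\Sigma)$ gives \eqref{eq:areas}. This is Proposition~\ref{prop:scaling}, $\Area(\Sigma^{*})=4r_T^{2}/\kappa_Q$, and it needs no lattice information at all. The same omission leaves the patch counts and lattice edges of (ii)--(iii) unproved. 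With (iv)--(v) in hand they follow, e.g.\ from $-2\pi/3=4\cdot(-\pi/6)$ and $-\pi/3=2\cdot(-\pi/6)$ (a fundamental patch carries $-\pi/6$), or from the area/volume bookkeeping of Proposition~\ref{prop:lattice}.

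Second, the symmetry mechanism you give for (vi) is wrong, though the conclusion is right. Neither contour has a half-turn about a line through a vertex, and $\Oct$ has none through a side midpoint either. Its symmetry group has order $4$: the half-turn about the common perpendicular of its diagonals, and two mirrors each containing one diagonal. Moreover, by the boundary dictionary a Schwarz half-turn about a line in $\Sigma$ becomes a \emph{mirror} of $\Sigma^{*}$, not a rotation. For example, the bimedian half-turns of the \DD{} patch in $\Tet$ become $\sigma_{x=0},\sigma_{y=0}$, which fix no corner of $\Sigma^{*}_{A}$. It is the mirror symmetries of $\Sigma$ that become half-turns of $\Sigma^{*}$ with axes lying in $\Sigma^{*}$: $C_2',C_2''$ for $\Sigma^{*}_{A}$, and $C_2(z),C_2''$ for $\Sigma^{*}_{B}$. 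The clean route, as in the paper, is to let the stabilizer of the face-order class in $D_{2d}$ act on $\Sigma^{*}$ via uniqueness. For $A$ this is all of $D_{2d}$ (your partition argument). For $B$ and $C$ it is only the rotation subgroup $D_2$, since only the four orientation-reversing elements swap $B$ and $C$, contrary to your ``swaps the other two.'' The resulting half-turns reverse the edges carrying the corners, which forces those corners to be midpoints. Because their axes lie in $\Sigma^{*}$, they also exchange its two sides, which is what bisection needs. By contrast, $C_2(z)$ for $\Sigma^{*}_{A}$ has its axis normal to the disk and would not exchange the sides.
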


\noindent
Parts (iv) agree with unpublished Surface Evolver computations of
K.~Brakke (private communication; see also \cite{BrakkeDisph}) to all
fifteen reported digits.

\section{Identification of the three contours}

\begin{lemma}[Contour lemma]\label{lem:contour}
Order the normals \eqref{eq:nrms} cyclically and form
$Q=Q(\sigma)$ with edge vectors $\sqrt2\,\nu_{\sigma(1)},\dots,
\sqrt2\,\nu_{\sigma(4)}$. Up to rotation and reflection of the cycle
there are exactly three classes, and{\rm :}
\begin{enumerate}
\item[\textup{(a)}] $\sigma=(1,3,2,4)$ gives interior angles
$(\tfrac\pi3,\tfrac\pi3,\tfrac\pi3,\tfrac\pi3)$ and all six mutual
vertex distances equal to $\sqrt2$. Hence $Q(1,3,2,4)$ is congruent to
$\Tet$.
\item[\textup{(b)}] $\sigma=(1,2,3,4)$ and $\sigma=(1,3,4,2)$ give
interior angles $(\tfrac\pi2,\tfrac\pi3,\tfrac\pi2,\tfrac\pi3)$, sides
$\sqrt2$ and diagonals $\sqrt2,2$. Hence each is congruent to
$2\cdot\Oct$.
\item[\textup{(c)}] The $S_{4}$ generator of $D_{2d}$ acts on indices as
the $4$-cycle $(1\,4\,2\,3)$; it fixes the class \textup{(a)} and
interchanges the two classes in \textup{(b)}. The stabilizer of
\textup{(a)} is all of $D_{2d}$ \textup{(}order $8$\textup{)}, that of
each member of \textup{(b)} has order $4$.
\end{enumerate}
\end{lemma}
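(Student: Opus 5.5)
The proof is a direct computation with the explicit normals \eqref{eq:nrms} and the Gram matrix \eqref{eq:gram}; no step is deep.

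\emph{Counting the classes.} Cyclic orderings of four symbols up to rotation and reflection number $4!/(4\cdot 2)=3$. Each class is determined by which pair of indices is \emph{not} adjacent in the cycle, that is, by a partition of $\{1,2,3,4\}$ into two pairs. The three classes are therefore represented by $(1,3,2,4)$ (nonadjacent pairs $\{1,2\},\{3,4\}$), $(1,2,3,4)$ (nonadjacent $\{1,3\},\{2,4\}$) and $(1,3,4,2)$ (nonadjacent $\{1,4\},\{2,3\}$). Since $\sum\nu_i=0$, every ordering closes up.

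\emph{Parts (a) and (b).} Consecutive edges in the cycle pair adjacent indices. By the rule in \S\ref{subsec:smyth}, the interior angle is $\pi/3$ when $\nu_i\cdot\nu_j=-\frac12$ and $\pi/2$ when $\nu_i\cdot\nu_j=0$. In (a) the adjacent pairs $13,32,24,41$ are all cross pairs, so every angle is $\pi/3$. In $(1,2,3,4)$ the adjacent pairs are $12,23,34,41$, and in $(1,3,4,2)$ they are $13,34,42,21$. Each of these cycles contains exactly two orthogonal pairs, and they alternate, giving angles $(\pi/2,\pi/3,\pi/2,\pi/3)$ up to rotation.

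For the diagonals, the diagonal vectors are $\sqrt2(\nu_a+\nu_b)$ for consecutive $a,b$, with
\[
|\nu_a+\nu_b|^2=2+2\,\nu_a\cdot\nu_b .
\]
This gives length $\sqrt2$ when $\nu_a\cdot\nu_b=-\frac12$ and length $2$ when $\nu_a\cdot\nu_b=0$. So in (a) both diagonals are $\sqrt2$, and all six distances equal $\sqrt2$. In (b) one diagonal is $\sqrt2$ and the other is $2$.

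Congruence then follows because a quadrilateral is determined up to isometry by its six vertex distances. Four points with prescribed pairwise distances are unique up to an isometry of $\RR^3$, and mirror images count as congruent. In (a) these distances match $\Tet$. In (b) they are twice the data of $\Oct$: sides $1/\sqrt2$, diagonals $1/\sqrt2$ and $1$. One must also check that the vertex labelling matches, so that the short diagonal joins the two $\pi/3$ vertices in both polygons. This can be verified directly from \eqref{eq:oct}.

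\emph{Part (c).} This is the step needing care: it requires an explicit symmetry.

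\begin{itemize}
\item Take the rotoreflection $S(x,y,z)=(y,-x,-z)$. Compute $S v_1=(0,-1,-\frac12)=v_4$, $S v_4=v_2$, $S v_2=v_3$ and $S v_3=v_1$. Hence $S$ preserves $T$ and acts on vertices, and therefore on the opposite faces and normals, as the 4-cycle $(1\,4\,2\,3)$.
\item Relabeling by $S$ carries the partition $\{12\},\{34\}$ to itself. It swaps $\{13\},\{24\}$ (image $\{41\},\{32\}$) with $\{14\},\{23\}$ (image $\{43\},\{21\}$), so it exchanges the two classes in (b).
\item For the stabilizers, $D_{2d}$ has order 8 and acts on the three partitions. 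It preserves the distinguished pair of length-2 edges, hence the partition $\{12\},\{34\}$, so the stabilizer of (a) is all of $D_{2d}$.
\item The two classes in (b) form one orbit of size 2. By orbit–stabilizer, each of their stabilizers has order $8/2=4$.
\end{itemize}
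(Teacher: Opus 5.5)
Your proposal is correct and follows essentially the paper's route: Gram-matrix angles, diagonal lengths, six-distance congruence, the same rotoreflection $(x,y,z)\mapsto(y,-x,-z)$, and orbit--stabilizer. Your one genuine variation is to deduce $\mathrm{Stab}(\textup{a})=D_{2d}$ from the fact that every symmetry of $T$ preserves the pair of length-$2$ edges, and hence the matching $\{12\},\{34\}$; this is cleaner than the paper's check on the generators $R,\sigma_{x=0}$.

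Two slips should be fixed. First, under $(1\,4\,2\,3)$ the image of $\{14\},\{23\}$ is $\{42\},\{31\}$, not $\{43\},\{21\}$, which would be class (a). Second, in both $Q(1,2,3,4)$ and $2\cdot\Oct$ the short diagonal joins the two $\pi/2$ vertices (it subtends the $\pi/3$ angles), not the two $\pi/3$ vertices. This labelling check is redundant anyway, since five distances $\sqrt2$ and one distance $2$ already determine the polygon up to congruence.
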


\begin{proof}
The interior angle at the vertex between consecutive edge vectors
$\nu_{i},\nu_{j}$ is $\pi-\arccos(\nu_{i}\!\cdot\!\nu_{j})$, so by
\eqref{eq:gram} it is $\pi/3$ when the pair is a cross pair and $\pi/2$
when the pair is $\{1,2\}$ or $\{3,4\}$.

(a) In the cycle $1\to3\to2\to4\to1$ every consecutive pair is a cross
pair, so every angle is $\pi/3$. With all sides equal to $s=\sqrt2$ and
all angles $\pi/3$, each of the four triangles cut off by a diagonal is
equilateral, so both diagonals equal $s$; all six distances are $s$ and
the four vertices form a regular tetrahedron of edge $\sqrt2$. A
tetrahedron is determined up to congruence by its six edge lengths,
and the vertices of $\Tet$ likewise have all six distances $\sqrt2$;
hence $Q(1,3,2,4)\cong\Tet$.

(b) In the cycle $1\to2\to3\to4\to1$ the consecutive pairs are
$\{1,2\},\{2,3\},\{3,4\},\{4,1\}$, i.e.,\ $\pi/2,\pi/3,\pi/2,\pi/3$.
The diagonal subtending the two $\pi/2$ angles has length $s\sqrt2=2$,
the other has length $s=\sqrt2$. For $\Oct$ with side $s_{0}=1/\sqrt2$
the diagonals are $s_{0}=1/\sqrt2$ and $1=s_{0}\sqrt2$; the six-distance
multisets agree after scaling by $s/s_{0}=2$. The remaining cycle
$1\to3\to4\to2$ has consecutive pairs
$\{1,3\},\{3,4\},\{4,2\},\{2,1\}$, giving the same pattern. That these
are the only three classes is the standard count
$3!/2=3$ of cyclic orders of four objects.

(c) With $R:(x,y,z)\mapsto(y,-x,-z)$ --- an $S_{4}$ rotoreflection, since
$\det R=-1$, $R^{2}=C_{2}(z)$ and $R^{4}=E$ --- one computes
$R\nu_{1}=\nu_{4}$, $R\nu_{4}=\nu_{2}$, $R\nu_{2}=\nu_{3}$,
$R\nu_{3}=\nu_{1}$, i.e.,\ the $4$-cycle $(1\,4\,2\,3)$. (Consistently,
$R$ permutes the vertices by the same cycle $v_{1}\to v_{4}\to v_{2}\to
v_{3}\to v_{1}$, as it must, $\varphi_{i}$ being opposite $v_{i}$.) This
$4$-cycle maps the cyclic word $(1,3,2,4)$ to $(4,1,3,2)$, a rotation of
it, and maps $(1,2,3,4)$ to $(4,3,1,2)\sim(1,2,4,3)\sim(1,3,4,2)$, the
other class of \textup{(b)}; since $R$ permutes the three classes and
fixes the first, it must interchange the remaining two.

The mirrors $\sigma_{x=0}$ and $\sigma_{y=0}$ act as the transpositions
$(1\,2)$ and $(3\,4)$, carrying $(1,3,2,4)$ to $(2,3,1,4)$ and
$(1,4,2,3)$ respectively, each a rotation of the reversal $(4,2,3,1)$;
so both preserve class \textup{(a)}. Since
$\sigma_{y=0}=R^{2}\sigma_{x=0}$ and $\langle R,\sigma_{x=0}\rangle
=D_{2d}$, the whole group fixes \textup{(a)}, whence
$\mathrm{Stab}(\textup{a})=D_{2d}$ has order $8$. The two classes of
\textup{(b)} therefore form a single orbit of size $2$, and
orbit--stabilizer gives $|\mathrm{Stab}|=8/2=4$ for each; explicitly the
stabilizer is in both cases the rotation subgroup
\[
D_{2}=\{E,\;C_{2}(z),\;C_{2}',\;C_{2}''\},\qquad
C_{2}':(x,y,z)\mapsto(y,x,-z),\quad
C_{2}'':(x,y,z)\mapsto(-y,-x,-z)
\]
acting on indices as $(1\,2)(3\,4)$, $(1\,3)(2\,4)$, $(1\,4)(2\,3)$.
\end{proof}

\begin{remark}
	The action of $D_{2d}$ on the three contour classes is the determinant:
	the rotation subgroup $D_{2}$ fixes all three, and each of the four
	orientation-reversing elements transposes the two classes of
	\textup{(b)}. Hence $\Sigma^{*}_{B}$ and $\Sigma^{*}_{C}$ are
	\emph{enantiomorphic} --- mirror images, each individually chiral, since
	no reflection in $D_{2d}$ preserves either --- whereas $\Sigma^{*}_{A}$
	is achiral. Their areas are of course equal.
\end{remark}

\begin{remark}
Lemma~\ref{lem:contour}(a)--(b) is the precise sense in which the phrase
``regular skew quadrilateral'' fails: \emph{both} $\Tet$ and $\Oct$ are
equilateral skew quadrilaterals, and both occur here. The invariant that
separates them is the angle pattern, equivalently the pair of diagonals
$(s,s)$ versus $(s,s\sqrt2)$.
\end{remark}

\section{From the contours to \PP{} and \DD}

We give the argument in the ``reverse'' direction, starting from the
classical patches. This avoids any appeal to uniqueness of Plateau
solutions.

\begin{lemma}[Ambient tetrahedron of an adjoint patch]\label{lem:ambient}
Let $Q$ be a skew quadrilateral with edge vectors $w_{1},w_{2},w_{3},w_{4}$
(in cyclic order, $\sum w_{i}=0$), let $\Sigma$ be a minimal disk
spanning $Q$, and let $\Sigma^{*}$ be its conjugate. Then
$\Sigma^{*}$ has four planar free-boundary arcs lying in planes with
unit normals $\hat w_{i}$, met orthogonally, and the Gram matrix of the
$\hat w_{i}$ determines the ambient tetrahedron up to homothety.
\begin{enumerate}
\item[\textup{(i)}] For $Q=\Tet$ as in \eqref{eq:tet} the edge vectors
are $(1,0,1),(0,1,-1),(-1,0,1),(0,-1,-1)$, whose normalizations satisfy
$\hat w_{1}\!\cdot\!\hat w_{3}=\hat w_{2}\!\cdot\!\hat w_{4}=0$ and all
cross products $=-\tfrac12$.
\item[\textup{(ii)}] For $Q=\Oct$ as in \eqref{eq:oct} the edge vectors
are $(1,-1,0),(0,1,1),(0,1,-1),(-1,-1,0)$ up to the factor $\tfrac12$,
whose normalizations satisfy
$\hat w_{1}\!\cdot\!\hat w_{4}=\hat w_{2}\!\cdot\!\hat w_{3}=0$ and all
other products $=-\tfrac12$.
\end{enumerate}
In both cases the configuration $\{\hat w_{i}\}$ is congruent to
$\{\nu_{i}\}$ of \eqref{eq:nrms}, so in both cases the ambient
tetrahedron is homothetic to $T$. Moreover in case \textup{(i)} the two
orthogonal pairs are \emph{opposite} edges of the cycle, so all four
\emph{adjacent} pairs have product $-\tfrac12$; in case \textup{(ii)}
the adjacent products alternate $-\tfrac12,0,-\tfrac12,0$.
\end{lemma}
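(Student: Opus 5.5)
The plan is to split the statement into a soft geometric part (the boundary dictionary plus the rigidity of a tetrahedron with prescribed face normals) and a hard-number part (the two Gram computations). For the first assertion I will invoke the boundary dictionary recalled in \S\ref{subsec:smyth}. Along a straight boundary segment of $\Sigma$ with direction $w_i$, the conjugate $\Sigma^{*}$ has a boundary arc that is a planar geodesic, and hence a line of curvature. This arc lies in a plane with normal $\hat w_i$, and $\Sigma^{*}$ meets that plane orthogonally. Concretely, if $\Sigma=X$ and $\Sigma^{*}=X^{*}$, then along the segment the tangential derivative of $X$ is parallel to $w_i$. Hence the conormal derivative of $X^{*}$ is parallel to $w_i$, and the tangential derivative of $X^{*}$ is orthogonal to it, which gives both planarity and orthogonality.

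Next I would show that the Gram matrix fixes the tetrahedron up to homothety. Since $\sum w_i=0$ with $|w_i|>0$, the unit vectors $\hat w_i$ satisfy a positive linear relation. Any three of them span $\RR^{3}$ (no four of them are coplanar here, which the computation will confirm). So the half-spaces $\{x\cdot\hat w_i\le h_i\}$ cut out a tetrahedron, and this holds for every choice of offsets $h_i$. Its faces have the prescribed outward normals. The four offsets modulo the three-dimensional translations leave a single positive scale, so the tetrahedron is determined up to translation and homothety. Finally, two spanning configurations of vectors with equal Gram matrices differ by an orthogonal map, possibly improper, so the Gram matrix determines the tetrahedron up to congruence and homothety.

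For (i) and (ii) the computations are routine differences of consecutive vertices from \eqref{eq:tet} and \eqref{eq:oct}.
\begin{itemize}
\item For $\Tet$ one gets $w_1\cdot w_3=w_2\cdot w_4=0$, and every adjacent product equals $-1$, which becomes $-\tfrac12$ after dividing by $|w_i|^2=2$.
\item For $\Oct$, after removing the factor $\tfrac12$, one gets $w_1\cdot w_4=w_2\cdot w_3=0$ and all other products equal to $-1$.
\end{itemize}
In each case the Gram matrix is a relabelling of \eqref{eq:gram}: two disjoint orthogonal pairs and four products equal to $-\tfrac12$. For $\Tet$ the matching sends $\{1,3\},\{2,4\}$ to $\{1,2\},\{3,4\}$, and for $\Oct$ it sends $\{1,4\},\{2,3\}$ to $\{1,2\},\{3,4\}$. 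The previous paragraph then gives congruence with $\{\nu_i\}$, and hence homothety of the ambient tetrahedron with $T$. The final "moreover" statement is read off directly: the orthogonal pairs in (i) are the non-consecutive indices, whereas in (ii) the pairs $\{1,4\}$ and $\{2,3\}$ are consecutive in the cycle.

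The main obstacle is justifying that the four free-boundary arcs really lie in the faces of the tetrahedron bounded by these planes, with the $\hat w_i$ as outward normals. It is not enough that they lie on the four planes, since one could have the wrong side or a non-compact configuration. I would handle this as Smyth does. The positive relation $\sum|w_i|\hat w_i=0$ guarantees that the normals are outward for a bounded simplex. Convexity of the Plateau disk, which lies in the convex hull of $Q$, together with the orthogonality of $\Sigma^{*}$ to each plane, places $\Sigma^{*}$ on the correct side. Since the lemma only claims homothety of the ambient tetrahedron, I would defer to Smyth's theorem for this embedding issue rather than reprove it.
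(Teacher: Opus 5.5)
Your proposal is correct and follows essentially the paper's route. Both rest on three ingredients: the boundary dictionary for the first assertion, direct dot-product computations for (i)--(ii), and the fact that a spanning configuration is determined up to an orthogonal map by its Gram matrix. Your half-space argument also makes explicit a step the paper leaves implicit, namely that the face normals fix the tetrahedron up to translation and scale.

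One small correction: the half-spaces $\{x\cdot\hat w_i\le h_i\}$ bound a tetrahedron only when $\sum_i |w_i|\,h_i>0$, not for every choice of offsets. Otherwise the intersection is empty or a single point. Finally, the embedding question you flag and defer to Smyth is not addressed in the paper's proof either.
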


\begin{proof}
The first sentence is the boundary dictionary of \S\ref{subsec:smyth}.
The dot products are immediate computations. Four unit vectors summing
to zero with Gram matrix having exactly two vanishing off-diagonal
entries forming a perfect matching, and all remaining entries equal to
$-\tfrac12$, are congruent to \eqref{eq:nrms}: the matching determines
the two $\pi/2$ dihedrals, hence the labelling, and the Gram matrix of
a spanning set determines it up to an orthogonal transformation. The
final statement compares the position of the matching relative to the
cycle, which is exactly the datum distinguishing the orderings in
Lemma~\ref{lem:contour}.
\end{proof}

\begin{thm}\label{thm:ident}
Theorem~\ref{thm:main}\textup{(i)--(iii)} holds. Specifically{\rm :}
\begin{itemize}
\item The adjoint of the Schwarz \DD{} patch spanning $\Tet$ is a
free-boundary minimal disk in a disphenoid homothetic to $T$ whose four
boundary arcs occur in the cyclic face order $(1,3,2,4)$; it is a patch
of \PP{} and realizes $\Sigma^{*}_{A}$.
\item The adjoint of the Schwarz \PP{} patch spanning $\Oct$ is a
free-boundary minimal disk in a disphenoid homothetic to $T$ whose four
boundary arcs occur in a cyclic face order of type $(1,2,3,4)$; it is a
patch of \DD{} and realizes $\Sigma^{*}_{B}$ and $\Sigma^{*}_{C}$.
\end{itemize}
\end{thm}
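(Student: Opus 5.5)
The plan is to run the argument in the ``reverse'' direction announced at the start of this section. We start from the classical patches and use Lemma~\ref{lem:ambient} to locate their adjoints inside a copy of $T$. Then Lemma~\ref{lem:contour} matches each adjoint to one of Smyth's three disks.

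\textbf{The $\Tet$ case.} Let $\Sigma_{\DD}$ be the classical Schwarz \DD{} patch spanning $\Tet$. Take its conjugate $\Sigma_{\DD}^{*}$, using the fact that the adjoint of \DD{} is \PP{} under the Bonnet association. By Lemma~\ref{lem:ambient}(i), $\Sigma_{\DD}^{*}$ has four planar free-boundary arcs in planes with normals $\hat w_{i}$, and these are congruent to $\{\nu_i\}$. After a rigid motion and homothety the arcs lie in the faces of $T$. The orthogonal pairs $\hat w_1\cdot\hat w_3=\hat w_2\cdot\hat w_4=0$ are opposite in the cycle, so adjacent arcs always lie in faces forming a cross pair. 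Hence the cyclic face order is the class $(1,3,2,4)$ of Lemma~\ref{lem:contour}(a).

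To identify $\Sigma_{\DD}^{*}$ with $\Sigma^*_A$, I would check two things.
\begin{itemize}
\item[(1)] The conjugate of a disk is a disk, and it is embedded. For embeddedness, quote Smyth: his disks are embedded because they come from conjugating the Plateau solution. Here the classical patch is a graph-like Plateau solution, by Radó's theorem for contours with convex projection, so it is the unique Plateau solution in $\Tet$.
\item[(2)] The free-boundary arcs actually lie in the faces of the triangles, not merely in the planes. This follows from Smyth's theorem itself, once the disk is known to be Smyth's.
\end{itemize}
This is where uniqueness sneaks back in. The cleanest route is to note that Smyth's $\Sigma^*_A$ is by definition the conjugate of a Plateau solution in $Q(1,3,2,4)\cong\Tet$ (Lemma~\ref{lem:contour}(a)). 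The \DD{} patch is such a solution, unique by Radó, since $\Tet$ projects injectively onto a convex square along the cube diagonal direction. So the two conjugates coincide up to congruence.

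That $\Sigma_{\DD}^{*}$ is a union of four Gandy--Klinowski fundamental patches of \PP{} in a cubic lattice of edge $2$ comes from the classical symmetry decomposition. The \DD{} patch in $\Tet$ is cut by its $C_2$ axes, the lines through opposite side midpoints lying on the surface, into four fundamental pieces. Conjugation sends these lines to planar symmetry curves of \PP{}. The lattice scale follows by tracking the side length $\sqrt2$ through the associate family.

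\textbf{The $\Oct$ case.} This case is identical in structure. Scale the classical \PP{} patch spanning $\Oct$ by $2$ and conjugate it to a \DD{} patch. Lemma~\ref{lem:ambient}(ii) gives normals congruent to $\{\nu_i\}$, now with the orthogonal pairs adjacent in the cycle. The face order is therefore of type $(1,2,3,4)$, and Lemma~\ref{lem:contour}(b) matches it to $2\cdot\Oct$. Applying the rotoreflection $R$ of Lemma~\ref{lem:contour}(c), which swaps the two classes in (b), produces the mirror image. This realizes both $\Sigma^*_B$ and $\Sigma^*_C$. Uniqueness of the Plateau solution in $\Oct$ again follows from Radó, since $\Oct$ has a convex projection. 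The patch consists of two fundamental pieces, split by its single $C_2$ line. The fcc lattice of cube edge $4$ comes from scaling.

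\textbf{Main obstacle.} I expect the main difficulty to be the justification that the adjoint arcs lie in the faces with the correct extent. That is, the conjugate sits inside $T$ itself, not just inside the cone of the four planes, and the homothety factor is exactly $1$. The factor is fixed by Smyth's normalization: the edge vectors are $c_i\nu_i$ with $c_i=\sqrt2$, which matches the side $\sqrt2$ of $\Tet$ and of $2\Oct$. For containment in $T$, I would simply invoke Smyth's theorem after the identification via Radó uniqueness, rather than verify it directly.
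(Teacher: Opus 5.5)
Your identification argument is the ``forward'' route that the paper mentions only in the remark after its proof: Lemma~\ref{lem:contour}, then uniqueness of the Plateau solution, then \PP--\DD{} adjointness. The paper's own proof instead runs in reverse, through Lemma~\ref{lem:ambient} and the stabilizer count. Your route is sound, and you are right that uniqueness (Rad\'o or Nitsche) is what lets one say the adjoint \emph{is} Smyth's disk rather than merely a free-boundary disk with the same face order.

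There are three slips to fix.
\begin{enumerate}
\item $\Tet$ projects onto the unit square along the $z$-axis (a cube edge), not along a body diagonal. Along $(1,1,1)$ the vertex $(0,0,0)$ lands on the centroid of the other three images, so the projected quadrilateral is not convex.
\item $\Oct$ projects onto a rhombus along $(1,0,-1)$, the line through the midpoints of its diagonals. In fact every skew quadrilateral projects injectively onto a parallelogram along that line, so Rad\'o covers all three contours at once.
\item ``Its single $C_2$ line'' is wrong. The symmetries of the \PP{} patch in $\Oct$ are the mirrors $y=0$ and $x+z=1$ (coordinates of \eqref{eq:oct}) together with a transverse half-turn. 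The two mirror curves conjugate to \emph{two} straight segments of $\Sigma^{*}_{B}$, along the $z$-axis and along $(1,-1,0)$, and these cut it into four congruent pieces. The count of two fundamental patches therefore needs a separate argument, e.g.\ total curvature $-\pi/3=2\cdot(-\pi/6)$, or the bookkeeping of \S\ref{sec:lattice}.
\end{enumerate}

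The genuine error is your claim that the homothety factor is exactly $1$. The side length $|c_i\nu_i|=\sqrt2$ fixes the scale of the contour $Q$, not of the tetrahedron that contains its conjugate. That second scale is fixed by Smyth's formula \eqref{eq:smytharea}. Conjugation preserves area and boundary length, so $\kappa_Qc^2=\tfrac12(4c)(sr)$, giving $s=\kappa_Qc/(2r)=2\kappa_Q$. This is $K'/K\approx1.279$ for $\Tet$ and $2K/K'\approx1.563$ for $2\cdot\Oct$ (Proposition~\ref{prop:scaling}, Corollary~\ref{cor:areas}).

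You can see directly that $s=1$ is impossible. It would make $\Area(\Sigma^{*}_{A})$ equal to the Plateau area $K'/K$, while \eqref{eq:smytharea} applied in $T$ gives $\tfrac12\cdot4\sqrt2\cdot\tfrac{\sqrt2}{4}=1$.

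The qualitative identification survives this error, since being a patch of \PP{} or \DD{} is scale-free and the statement only asserts a disphenoid homothetic to $T$. But your plan to get the lattice constants by ``tracking the side length $\sqrt2$ through the associate family'' and ``scaling'' fails as written. Without rescaling by $1/s$ you obtain the constants of the unrescaled conjugates, $2K'/K$ and $8K/K'$, instead of the $2$ and $4$ asserted in Theorem~\ref{thm:main}(ii),(iii). The paper gets these from the bookkeeping in \S\ref{sec:lattice}, which uses the correctly rescaled areas in $T$.
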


\begin{proof}
By the classical results recorded in \eqref{eq:areatet} and
\eqref{eq:areaoct} (Schwarz \cite{Schwarz}; 
exact parametrizations in \cite{GKMK,GK}), the Plateau solution in
$\Tet$ is a patch of \DD{} and that in $\Oct$ is a patch of \PP.
By Lemma~\ref{lem:ambient} their adjoints are free-boundary minimal
disks with four planar orthogonal arcs in a tetrahedron homothetic to
$T$, with cyclic face orders of the two distinct types; by
Lemma~\ref{lem:contour} these are precisely the distinguished class and
the non-distinguished class. Finally \DD{} and \PP{} are adjoint
(Bonnet angle $\pi/2$) \cite{GKMK,GK,Schoen}, so the adjoint of a \DD{}
patch is a \PP{} patch and conversely.

For the symmetry assertion: the disk arising from $\Tet$ inherits the
stabilizer of the cyclic order $(1,3,2,4)$, which is all of $D_{2d}$ by
Lemma~\ref{lem:contour}(c), so it is the distinguished disk; the other
two have stabilizer of order $4$ and are interchanged by $S_{4}$, hence
congruent. The patch counts in Theorem~\ref{thm:main}(ii),(iii) are
established in \S\ref{sec:lattice}.
\end{proof}

\begin{remark}[The forward direction, for completeness]
If one prefers to argue forward from Smyth's disks, uniqueness of the
Plateau solution suffices and is available: the total curvature of
$\Tet$ is $4(\pi-\pi/3)=8\pi/3<4\pi$ and that of $\Oct$ is
$2(\pi-\pi/2)+2(\pi-\pi/3)=7\pi/3<4\pi$, so Nitsche's uniqueness
theorem \cite{NitscheUniq} applies to each. For $\Tet$ one may instead
invoke Rad\'o's theorem, since the Plateau solution is a graph over the
unit square --- this is exactly the form in which its area is presented
in \cite[\S5.17]{FinchDP}.
\end{remark}

\begin{remark}[Why the labels look crossed]
It is worth emphasizing the point that caused confusion in earlier
drafts. The disk one \emph{sees} inside $T$ is $\Sigma^{*}$, the
conjugate of the Plateau disk. Hence ``the distinguished contour is the
\DD{} contour'' and ``the distinguished disk is a \PP{} patch'' are the
same statement. The remark of \cite[p.~545]{GKMK}, that the fundamental
patch of \DD{} ``is inscribed in a tetragonal disphenoid, an isohedral
tetrahedron with isosceles triangle faces,'' refers to the
\emph{free-boundary} picture and therefore points at
$\Sigma^{*}_{B},\Sigma^{*}_{C}$ --- which is precisely what
Theorem~\ref{thm:ident} confirms. See also \S\ref{sec:alcove}.
\end{remark}

\begin{remark}[The accessory parameter, circumvented]
In \cite[\S6, footnote]{FinchReg} it was observed that the two
non-distinguished disks have a Gauss image which is a circular-arc
\emph{quadrilateral} with unequal angles ($2\pi/3$ and $\pi/2$ here),
so that the accessory parameter returns and the Schwarz-triangle method
of \cite{FinchReg} does not apply. At $\beta=2/3$ this obstruction
evaporates, because the surface is algebraic: with $g=w$ the height
differential is
$dh=dw/\sqrt{w^{8}-14w^{4}+1}$, i.e.,\ exactly the Weierstrass function
$R(t)=(t^{8}+\lambda t^{4}+1)^{-1/2}$ with $\lambda=-14$ of
\cite{GKMK,GK} --- the eight zeros being the stereographic images
$\pm\sqrt{2\pm\sqrt3},\ \pm i\sqrt{2\pm\sqrt3}$ of the eight cube-vertex
directions, which are the flat points of the $\PP/\DD/\mathrm{G}$
family. The accessory parameter is thus not computed but \emph{known}.
This also vindicates ``Attempt~1'' of \cite[\S A.11]{FinchReg}: the
$tD/rPD$ ansatz is wrong for the regular tetrahedron but right here.
\end{remark}

\section{Exact areas}

\begin{proposition}[Scaling]\label{prop:scaling}
Let $T$ be any tetrahedron with all faces of equal area $c$ and inradius
$r$, and let $Q$ be one of Smyth's contours, with shape constant
$\kappa_{Q}$ as in \eqref{eq:kappas}. Let $\Sigma^{*}$ be the
corresponding free-boundary disk, rescaled to sit in $T$ itself. Then
\begin{equation}\label{eq:master}
\boxed{\ \Area(\Sigma^{*})=\frac{4r^{2}}{\kappa_{Q}}\ }
\end{equation}
and the ambient tetrahedron produced by the unrescaled construction is
$\widetilde T=sT$ with $s=\kappa_{Q}c/(2r)$.
\end{proposition}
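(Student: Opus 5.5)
The plan is to compute the area of the unrescaled free-boundary disk in two independent ways, once from the Plateau side and once from Smyth's area formula \eqref{eq:smytharea}, and to let the comparison fix the homothety factor $s$. Then rescale.

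\emph{Step 1 (Plateau side).} In the unrescaled construction of \S\ref{subsec:smyth} the contour $Q$ has edge vectors $c\,\nu_{\sigma(i)}$, so it is equilateral with side $c$. By the definition \eqref{eq:kappas} of $\kappa_Q$, the Plateau disk $\Sigma$ spanning $Q$ has area $\kappa_Q c^{2}$. The conjugate $\widetilde\Sigma^{*}$ is isometric to $\Sigma$, because the associate family preserves the induced metric. Hence
\[
\Area(\widetilde\Sigma^{*})=\kappa_Q c^{2}.
\]

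\emph{Step 2 (Smyth side).} The same isometry carries $\partial\Sigma$ onto the free boundary of $\widetilde\Sigma^{*}$, preserving arc length. Each straight side of length $c$ becomes a planar geodesic arc of length $c$, so the free-boundary perimeter is $\mathcal P=4c$. By Lemma~\ref{lem:ambient} these four arcs lie in planes with unit normals $\nu_i$. By Smyth's theorem they lie in the faces of a tetrahedron $\widetilde T$, and $\widetilde\Sigma^{*}$ sits inside $\widetilde T$ and meets $\partial\widetilde T$ orthogonally. Four planes with prescribed outward normals bound a tetrahedron determined up to translation and homothety, so $\widetilde T=sT$ after a translation, for some $s>0$. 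Its inradius is $\rho=sr$. Formula \eqref{eq:smytharea} then gives
\[
\Area(\widetilde\Sigma^{*})=\tfrac12\cdot 4c\cdot sr=2csr.
\]

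\emph{Step 3 (compare and rescale).} Equating Steps 1 and 2 gives $\kappa_Q c^{2}=2csr$, i.e.\ $s=\kappa_Q c/(2r)$, which is the second assertion. Rescaling by $1/s$ places the disk in $T$ and multiplies area by $s^{-2}$, so
\[
\Area(\Sigma^{*})=\frac{\kappa_Q c^{2}}{s^{2}}=\kappa_Q c^{2}\cdot\frac{4r^{2}}{\kappa_Q^{2}c^{2}}=\frac{4r^{2}}{\kappa_Q}.
\]
This is \eqref{eq:master}. As a sanity check, for our $T$ we have $4r^{2}=\tfrac12$, and \eqref{eq:kprod} turns \eqref{eq:master} into \eqref{eq:areas}.

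\emph{Main obstacle.} The delicate point is Step 2: we need the inradius entering Smyth's formula to be exactly $sr$ for the tetrahedron cut out by the four face planes. This requires that the boundary arcs lie in the faces of that tetrahedron rather than merely in the planes, with the disk on the correct side, so that the homothety ratio is positive and Smyth's formula applies verbatim. I would take this directly from Smyth's construction together with the normal-congruence statement of Lemma~\ref{lem:ambient}. Everything else is bookkeeping under scaling.
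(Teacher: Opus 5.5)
Your proposal is correct and matches the paper's proof step for step. Conjugation preserves the area $\kappa_Q c^{2}$ and the perimeter $4c$, Smyth's formula \eqref{eq:smytharea} with inradius $sr$ for $\widetilde T=sT$ gives $s=\kappa_Q c/(2r)$, and rescaling by $1/s$ yields $4r^{2}/\kappa_Q$. The point you flag as the main obstacle (that the arcs lie in the faces of a positive homothet of $T$) is likewise taken directly from Smyth's construction in the paper, not proved separately.
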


\begin{proof}
Smyth's construction applied to $T$ produces $Q$ with side $c$, so
$\Area(\Sigma)=\kappa_{Q}c^{2}$ and the perimeter is $\mathcal{P}=4c$.
The Bonnet transformation preserves both the induced metric and the
boundary length, so $\Area(\Sigma^{*})=\kappa_{Q}c^{2}$ and
$|\partial\Sigma^{*}|=4c$ as well. The free boundary lies in
$\widetilde T=sT$, of inradius $sr$, so \eqref{eq:smytharea} gives
\[
\kappa_{Q}c^{2}=\Area(\Sigma^{*})=\tfrac12(4c)(sr)=2crs
\quad\Longrightarrow\quad
s=\frac{\kappa_{Q}c}{2r}.
\]
Rescaling lengths by $1/s$ to place the disk in $T$ divides areas by
$s^{2}$:
$\kappa_{Q}c^{2}/s^{2}=\kappa_{Q}c^{2}\cdot 4r^{2}/(\kappa_{Q}^{2}c^{2})
=4r^{2}/\kappa_{Q}$.
\end{proof}

\begin{corollary}\label{cor:areas}
For $T$ of \eqref{eq:verts} one has $c=\sqrt2$, $r=1/(2\sqrt2)$,
$4r^{2}=\tfrac12$, and therefore
\begin{equation}
\Area(\Sigma^{*}_{A})=\frac{1}{2\kappa_{\rm tet}}=\frac{K}{K'},
\qquad
\Area(\Sigma^{*}_{B})=\Area(\Sigma^{*}_{C})
=\frac{1}{2\kappa_{\rm oct}}=\frac{K'}{2K},
\end{equation}
with the homothety factors $s_{A}=K'/K=1.2792615711\ldots$ and
$s_{B}=2K/K'=1.5634019227\ldots$, and perimeters
\[
|\partial\Sigma^{*}_{A}|=4\sqrt2\,\frac{K}{K'}=4.4219684045\ldots,
\qquad
|\partial\Sigma^{*}_{B}|=2\sqrt2\,\frac{K'}{K}=3.6182981275\ldots
\]
\end{corollary}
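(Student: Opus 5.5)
The plan is to plug the data of $T$ into the master formula \eqref{eq:master} of Proposition~\ref{prop:scaling}, with the shape constants \eqref{eq:kappas} and the matching of contours to disks from Theorem~\ref{thm:ident}. First record the inputs from \eqref{eq:Tdata}. Each face is isosceles with sides $2,\sqrt3,\sqrt3$, so its height over the base is $\sqrt2$ and $c=\sqrt2$. The inradius is $r=\sqrt2/4$, so $4r^2=\tfrac12$.

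Next assign contours. Theorem~\ref{thm:ident}, via Lemma~\ref{lem:contour}, says that $\Sigma^*_A$ comes from the contour congruent to $\Tet$, and $\Sigma^*_B,\Sigma^*_C$ from contours congruent to $2\cdot\Oct$. Since $\kappa_Q$ is scale invariant, the factor $2$ is irrelevant, and it is consistent: all Smyth sides equal $c=\sqrt2=2\cdot(1/\sqrt2)$. Then \eqref{eq:master} gives
\[
\Area(\Sigma^*_A)=\frac{1}{2\kappa_{\rm tet}}=\frac{1}{2}\cdot\frac{2K}{K'}=\frac{K}{K'},\qquad
\Area(\Sigma^*_B)=\frac{1}{2\kappa_{\rm oct}}=\frac{K'}{2K}.
\]
The value for $\Sigma^*_C$ is the same, because it is congruent to $\Sigma^*_B$.

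The homothety factors come from $s=\kappa_Q c/(2r)=\kappa_Q\sqrt2\cdot\sqrt2=2\kappa_Q$. This gives $s_A=K'/K$ and $s_B=2K/K'$. Numerically these match \eqref{eq:areatet}, and twice $\kappa_{\rm oct}$ is $1.5634\ldots$.

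For the perimeters, the unrescaled boundary has length $4c=4\sqrt2$, and rescaling by $1/s$ divides it by $s$. This yields $4\sqrt2\,K/K'$ and $4\sqrt2\cdot K'/(2K)=2\sqrt2\,K'/K$. As a cross-check, Smyth's formula \eqref{eq:smytharea} with $\rho=r$ should give $\tfrac12\mathcal P r=\mathcal P\sqrt2/8$, which reproduces the areas. The numerical values follow by evaluating $K(1/4)$ and $K(3/4)$.

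The only step needing care is the contour-to-disk matching. The factor-of-two subtlety is that $\Tet$ has side $\sqrt2$ while Smyth's $Q$ also has side $c=\sqrt2$. Because $\kappa_Q$ is dimensionless, no separate check is actually needed, and the argument is otherwise routine arithmetic.
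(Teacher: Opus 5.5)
Your proposal is correct and follows essentially the same route as the paper: substitute $c=\sqrt2$ and $4r^{2}=\tfrac12$ into $\Area(\Sigma^{*})=4r^{2}/\kappa_{Q}$ and $s=\kappa_{Q}c/(2r)=2\kappa_{Q}$ from Proposition~\ref{prop:scaling}, use the contour-to-disk matching of Theorem~\ref{thm:ident}, and obtain the perimeters as $4c/s$. Your Smyth-formula check $\tfrac12\mathcal{P}r$ is redundant, since that formula is already built into the master formula, but it is harmless.
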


\noindent
Note the pleasant accident $\Area(\Sigma^{*}_{A})=\kappa_{\rm oct}$ and
$\Area(\Sigma^{*}_{B})=\kappa_{\rm tet}$, a consequence of
$4r^{2}=\tfrac12$ together with \eqref{eq:kprod}.

\begin{table}[h]
\centering
\caption{The two disks. $K=\KK$, $K'=\KKp$.}
\begin{tabular}{lll}
\toprule
& $\Sigma^{*}_{A}$ (distinguished) & $\Sigma^{*}_{B}\cong\Sigma^{*}_{C}$\\
\midrule
Plateau contour $Q$ & tetrahedral four edges $\Tet$ & octahedral four edges $\Oct$\\
interior angles of $Q$ & $\pi/3,\pi/3,\pi/3,\pi/3$
 & $\pi/2,\pi/3,\pi/2,\pi/3$\\
diagonals of $Q$ & $\sqrt2,\ \sqrt2$ & $\sqrt2,\ 2$\\
$\Sigma$ (Plateau) is a patch of & $\DD$ & $\PP$\\
$\Sigma^{*}$ (free boundary) is a patch of & $\mathbf{\PP}$ & $\mathbf{\DD}$\\
$\kappa_{Q}$ & $K'/(2K)$ & $K/K'$\\
homothety $s$ & $K'/K=1.27926157\ldots$ & $2K/K'=1.56340192\ldots$\\
$\Area$ & $K/K'=0.781700961348056$ & $K'/(2K)=0.639630785585503$\\
$|\partial\Sigma^{*}|$ & $4\sqrt2\,K/K'=4.42196840$ & $2\sqrt2\,K'/K=3.61829813$\\
$\int K\,dA$ & $-2\pi/3=-2.0943951$ & $-\pi/3=-1.0471976$\\
symmetry group & $D_{2d}$, order $8$ & order $4$\\
flat competitor & unit square, area $1$ & rhombus, area $\sqrt2/2$\\
saving over it & $21.8299\%$ & $9.5425\%$\\
Brakke (Surface Evolver) & $0.78170096134806$ & $0.639630785585505$\\
\bottomrule
\end{tabular}
\end{table}

\section{The product identity}\label{sec:product}

\begin{proposition}
$2\,\Area(\Sigma^{*}_{A})\,\Area(\Sigma^{*}_{B})=1$, and more
intrinsically
\begin{equation}
\Area(\Sigma^{*}_{A})\,\Area(\Sigma^{*}_{B})
=\frac{16r_{T}^{4}}{\kappa_{\rm tet}\kappa_{\rm oct}}=8r_{T}^{4}
=\tfrac12 .
\end{equation}
\end{proposition}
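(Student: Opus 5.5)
The plan is to multiply the two instances of the master formula \eqref{eq:master} in Proposition~\ref{prop:scaling}. With $Q=\Tet$ for $\Sigma^{*}_{A}$ and $Q=\Oct$ for $\Sigma^{*}_{B}$, it gives
\[
\Area(\Sigma^{*}_{A})\,\Area(\Sigma^{*}_{B})=\frac{4r_T^{2}}{\kappa_{\rm tet}}\cdot\frac{4r_T^{2}}{\kappa_{\rm oct}}=\frac{16r_T^{4}}{\kappa_{\rm tet}\kappa_{\rm oct}}.
\]
This is the first equality of the display, and it holds for any equal-area-face tetrahedron whose Smyth contours are $\Tet$ and $2\Oct$ up to similarity. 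For the specific $T$ of \eqref{eq:verts}, that identification is Lemma~\ref{lem:contour} together with Theorem~\ref{thm:ident}.

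Next we substitute the relation \eqref{eq:kprod}, $\kappa_{\rm tet}\kappa_{\rm oct}=\tfrac12$, which turns the expression into $32r_T^{4}$. Here a normalization needs care. The display as stated reads $8r_T^{4}$, whereas the naive substitution gives $32r_T^4$. With $r_T^2=\tfrac18$ from \eqref{eq:Tdata}, $32r_T^4=\tfrac12$ while $8r_T^4=\tfrac18$. So the numerically correct intermediate form is $16r_T^4/(\kappa_{\rm tet}\kappa_{\rm oct})=32r_T^4=\tfrac12$, and I would present it that way. I would also cross-check directly from Corollary~\ref{cor:areas}: $(K/K')\cdot(K'/2K)=\tfrac12$, with no elliptic-integral identity needed. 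Hence $2\,\Area(\Sigma^{*}_{A})\Area(\Sigma^{*}_{B})=1$.

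It remains to justify \eqref{eq:kprod} itself, since the whole argument rests on it. From \eqref{eq:kappas}, $\kappa_{\rm tet}\kappa_{\rm oct}=\frac{K'}{2K}\cdot\frac{K}{K'}=\frac12$ is immediate once the classical areas \eqref{eq:areatet}--\eqref{eq:areaoct} are granted.

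The real content, to be explained after the statement, is \emph{why} the product of the two Plateau shape constants is a pure number. I would attribute this to the Bonnet adjointness of $\DD$ and $\PP$. The $\Tet$ patch of $\DD$ and the $\Oct$ patch of $\PP$ are built from Gandy--Klinowski fundamental patches sharing the single Weierstrass datum $R(t)=(t^8-14t^4+1)^{-1/2}$. Their areas are both integrals of $|R|^2(1+|t|^2)^2$ over related domains, and these integrals evaluate to $K'/K$ and $K/K'$ respectively, i.e.\ a period and its reciprocal, up to rational factors.

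I expect this conceptual step to be the main obstacle. Concretely, it requires tracking the patch counts and lattice scales from \S\ref{sec:lattice}: four fundamental patches at cube edge $2$ versus two at fcc edge $4$. The rational factors must combine to exactly $\tfrac12$ rather than some other constant. The arithmetic identity, by contrast, is routine.
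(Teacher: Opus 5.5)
Your argument is correct and is the paper's own: multiply the two instances of \eqref{eq:master} and substitute \eqref{eq:kprod}, which is immediate from the classical areas \eqref{eq:areatet}--\eqref{eq:areaoct}. Your closing discussion of Weierstrass integrals is commentary rather than proof; the paper instead explains \eqref{eq:kprod} through the lattice-constant ratio $a_{\DD}/a_{\PP}=K/K'$. You are also right about the normalization: since $r_T^4=\tfrac1{64}$, the middle term must be $32r_T^4$, not $8r_T^4=\tfrac18$, so the displayed chain contains a typo that the paper's one-line proof passes over.
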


\begin{proof}
Immediate from \eqref{eq:master} and \eqref{eq:kprod}. The content is
therefore entirely in \eqref{eq:kprod}, which is the Bonnet relation for
the adjoint pair: if $\Sigma$ and $\Sigma^{*}$ are adjoint patches
spanning, respectively, a straight-edged contour of perimeter
$\mathcal{P}$ and a free boundary of the same length, then by
\eqref{eq:smytharea} and $\Area(\Sigma)=\Area(\Sigma^{*})$ the two
Coxeter cells have the \emph{same} inradius
$\rho=2\Area/\mathcal{P}$. Applied to $(\DD,\PP)$ this is the classical
relation between the lattice constants, $a_{\DD}/a_{\PP}=K/K'$, recorded
as \cite[Eq.~(12)]{GKMK} and \cite[Eq.~(7)]{GK}; in the normalizations
of \eqref{eq:areatet}--\eqref{eq:areaoct} it reads precisely
$\kappa_{\rm tet}\kappa_{\rm oct}=\tfrac12$.
\end{proof}

\noindent
So the identity is not a coincidence, but neither is it deep: it is the
adjointness of $\PP$ and $\DD$, seen through Smyth's area formula.

\section{Lattice bookkeeping, and uniqueness of the assignment}
\label{sec:lattice}

Extending each disk by reflection in the four face planes of $T$ yields
a triply periodic minimal surface for which $T$ is a kaleidoscopic cell.
Consequently the \emph{density} $\Area(\Sigma^{*})/\Vol(T)$ is an
invariant of the extended surface. If the extended surface has
conventional cubic cell of edge $L$, Euler characteristic $\chi$ per
cell, and normalized area $\sigma=\Area_{\rm cell}/L^{2}$, then three
conditions must hold simultaneously:
\begin{equation}\label{eq:threechecks}
\underbrace{\frac{\Area(\Sigma^{*})}{\sigma L^{2}}}_{\text{area fraction}}
=\underbrace{\frac{\Vol(T)}{L^{3}}}_{\text{volume fraction}},
\qquad
\int_{\Sigma^{*}}\!K\,dA=\frac{\Vol(T)}{L^{3}}\cdot 2\pi\chi .
\end{equation}
The standard data are $\sigma_{\PP}=3K/K'=2.345102884\ldots$ with
$\chi=-4$ (genus $3$, simple cubic cell), and
$\sigma_{\DD}=3K'/K=3.837783\ldots$ with $\chi=-16$ (genus $9$ per
conventional fcc\ cube) \cite{GKMK,GK}.

\begin{proposition}\label{prop:lattice}
With $\Vol(T)=2/3$:
\begin{enumerate}
\item[\textup{(i)}] For $\Sigma^{*}_{A}$ and the surface $\PP$, both
conditions \eqref{eq:threechecks} hold with $L=2$ exactly:
$\tfrac{K/K'}{3(K/K')\cdot4}=\tfrac1{12}=\tfrac{2/3}{8}$ and
$\tfrac1{12}(-8\pi)=-\tfrac{2\pi}{3}$.
\item[\textup{(ii)}] For $\Sigma^{*}_{B}$ and the surface $\DD$, both
hold with $L=4$ exactly:
$\tfrac{K'/(2K)}{3(K'/K)\cdot16}=\tfrac1{96}=\tfrac{2/3}{64}$ and
$\tfrac1{96}(-32\pi)=-\tfrac{\pi}{3}$.
\item[\textup{(iii)}] The swapped assignment is impossible. Pairing
$\Sigma^{*}_{A}$ with $\DD$ forces $L^{3}=32$ from the curvature
condition but $L=2(K'/K)^{2}=3.27300\ldots$ from the area condition, and
$32^{1/3}=3.17480\ldots\neq3.27300\ldots$; pairing $\Sigma^{*}_{B}$ with
$\PP$ forces $L^{3}=16$ against $L=4(K/K')^{2}=2.44428\ldots$, and
$16^{1/3}=2.51984\ldots\neq2.44428\ldots$.
\end{enumerate}
Consequently the identification of Theorem~\ref{thm:ident} is the only
one consistent with the two classical surfaces, and the patch counts are
\[
\Area(\Sigma^{*}_{A})=4\cdot\frac{a^{2}K}{4K'}\Big|_{a=1},
\qquad
\Area(\Sigma^{*}_{B})=2\cdot\frac{a_{\DD}^{2}K'}{4K}\Big|_{a_{\DD}=1},
\]
i.e.,\ four fundamental \PP{} patches and two fundamental \DD{} patches
in the notation of \cite{GK} and \cite{GKMK}.
\end{proposition}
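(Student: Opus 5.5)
The three parts of Proposition~\ref{prop:lattice} are direct substitutions into the two conditions \eqref{eq:threechecks}. The inputs are the areas from Corollary~\ref{cor:areas}, the standard data $\sigma_{\PP}=3K/K'$, $\chi_{\PP}=-4$, $\sigma_{\DD}=3K'/K$, $\chi_{\DD}=-16$, and $\Vol(T)=2/3$.

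Before computing, I would justify \eqref{eq:threechecks}. Reflecting the disk across the faces of $T$ tiles space by copies of $T$, and the extended surface meets each copy in a congruent copy of $\Sigma^*$. A conventional cell of edge $L$ therefore contains $L^3/\Vol(T)$ tetrahedra, each carrying one patch. Comparing area and total curvature per cell, the latter being $2\pi\chi$ by Gauss--Bonnet for the quotient surface, gives both identities.

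For (i), I would take $L=2$. The area fraction is $(K/K')/(3(K/K')\cdot 4)=1/12$, and the volume fraction is $(2/3)/8=1/12$. The curvature condition then predicts $\frac1{12}\cdot 2\pi(-4)=-2\pi/3$. For (ii), I would take $L=4$. The area fraction is $(K'/2K)/(3(K'/K)\cdot16)=1/96=(2/3)/64$, and the predicted curvature is $\frac1{96}(-32\pi)=-\pi/3$. In both cases the elliptic-integral ratios cancel identically, which is what makes these exact.

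For (iii), each condition alone determines $L$. For $\Sigma_A^*$ with $\DD$, the curvature condition gives $(2/3)(-32\pi)/L^3=-2\pi/3$, hence $L^3=32$. This uses the value $\int K\,dA=-2\pi/3$, which I would obtain independently from Gauss--Bonnet on the disk: four corners of interior angle $\pi/2$, geodesic-curvature contribution from the planar free boundary, or equivalently the spherical area of the Gauss image. The area condition gives $(K/K')/(3(K'/K)L^2)=(2/3)/L^3$, hence $L=2(K'/K)^2$. The two values differ numerically. The case of $\Sigma_B^*$ with $\PP$ goes the same way, giving $L^3=16$ against $L=4(K/K')^2$.

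The step I expect to be the main obstacle is turning "the numbers disagree" into a rigorous inequality. I would certify the inequality $2(K'/K)^2\neq 32^{1/3}$ using rigorous interval bounds for $K(1/4)/K(3/4)$ via the AGM. I would also confirm that the independent derivation of the total curvatures in Theorem~\ref{thm:main}(v) does not itself rely on the lattice assignment, so that the argument is not circular.

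Finally, the patch counts come from dividing. Using the fundamental patch areas $a^2K/(4K')$ for $\PP$ and $a_{\DD}^2K'/(4K)$ for $\DD$ in the normalizations of \cite{GK,GKMK}, with the lattice constants matched to $L$ under those conventions, the quotients are $4$ and $2$.
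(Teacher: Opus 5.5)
Your route is the paper's own. You substitute the areas of Corollary~\ref{cor:areas} and the classical data $(\sigma,\chi)$ into \eqref{eq:threechecks}, observe that the elliptic ratios cancel for $(\Sigma^{*}_{A},\PP)$ at $L=2$ and for $(\Sigma^{*}_{B},\DD)$ at $L=4$, show that the two conditions demand different $L$ for the swapped pairings, and then divide by the fundamental-patch areas. Your density justification of \eqref{eq:threechecks} is correct, and so is all of the algebra ($L^{3}=32$ versus $L=2(K'/K)^{2}$, and $L^{3}=16$ versus $L=4(K/K')^{2}$).

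The step that fails as written is your ``independent'' Gauss--Bonnet derivation of $\int_{\Sigma^{*}_{A}}K\,dA=-2\pi/3$.
\begin{itemize}
\item The corners of $\Sigma^{*}_{A}$ are not right angles. The cycle $(1,3,2,4)$ uses only cross pairs, so each corner lies on an edge of dihedral angle $\pi/3$. The disk meets both adjacent faces orthogonally, so its tangent plane at the corner contains both face normals and is perpendicular to the edge. Hence the interior angle equals the dihedral angle $\pi/3$. Equivalently, the Bonnet isometry preserves the $\pi/3$ corner angles of $\Tet$.
\item There is no geodesic-curvature term either. Along a curve where a surface meets a plane orthogonally, the surface conormal is the plane's normal, while the curve's curvature vector lies in the plane. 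So each free-boundary arc is a geodesic.
\end{itemize}
With your ingredients, Gauss--Bonnet gives $2\pi-4\cdot\frac{\pi}{2}=0$. That would make the curvature condition unsatisfiable for every $L$, and it would break (i) as well as (iii).

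The correct count is $2\pi-4\bigl(\pi-\frac{\pi}{3}\bigr)=-\frac{2\pi}{3}$. For $\Sigma^{*}_{B}$, whose corners carry $\frac{\pi}{2},\frac{\pi}{3},\frac{\pi}{2},\frac{\pi}{3}$, it is $-\frac{\pi}{3}$, which you also need for the second swapped pairing. This is Lemma~\ref{lem:gb} of the paper. It uses no lattice data, so your non-circularity requirement is met once the angles are fixed.

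The obstacle you flagged is milder than it looks. Both swapped pairings reduce to the single equation $K/K'=2^{-1/3}=0.7937\ldots$. Since $K/K'=0.7817\ldots$, any certified two-digit value settles (iii).
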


\noindent
The patch counts are consistent with the symmetry counts: each
Gandy--Klinowski fundamental patch carries an internal mirror
\cite[\S5]{GK}, \cite[\S4]{GKMK}, hence equals two Weierstrass
integration domains; and $8=4\times2$, $4=2\times2$ match the orbit
orders $8$ and $4$ of Lemma~\ref{lem:contour}(c).

\begin{remark}
CLP and H are excluded without numerics: the reflection group generated
by the faces of $T$ is cubic (\S\ref{sec:alcove}), whereas CLP is
tetragonal and H is hexagonal. They are also excluded structurally,
since Theorem~\ref{thm:ident} identifies the Plateau solutions outright.
\end{remark}

\section{$T$ is the $\widetilde{A}_{3}$ alcove}\label{sec:alcove}

All six dihedral angles of $T$ are submultiples of $\pi$
($\pi/2$ twice, $\pi/3$ four times), so reflections in the four faces
generate a discrete group $W$ with fundamental domain $T$. The Coxeter
graph on the faces has an edge labelled $3$ exactly for the four cross
pairs and no edge for $\{1,2\}$ and $\{3,4\}$: a $4$-cycle
$1-3-2-4-1$ with all labels $3$. This is the affine Coxeter diagram
$\widetilde{A}_{3}$. Hence

\begin{proposition}
$T$ is the alcove of the affine Weyl group $\widetilde{A}_{3}$; its
translation lattice is the $A_{3}$ root lattice, i.e.,\ fcc; and
$\Vol(T)=L^{3}/96$ for the conventional cubic cell of edge $L$, since
$|W(A_{3})|=24$ and the conventional fcc\ cube contains four
primitive cells. With $L=4$ this gives $64/96=2/3=\Vol(T)$.
\end{proposition}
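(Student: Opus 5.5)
The plan is to verify the three claims in turn. First, that $T$ is the alcove of the affine Coxeter group with diagram $\At$. Second, that the translation subgroup of $W$ is the $A_3$ root lattice, i.e.\ fcc. Third, the volume count.

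\textbf{The alcove.} By the dihedral angle computation from \eqref{eq:gram}, every dihedral angle of $T$ is $\pi/m$ with $m\in\{2,3\}$. Poincaré's polyhedron theorem (equivalently, the classical Coxeter criterion for simplices) then shows that the reflections in the four faces generate a discrete group $W$ with $T$ as strict fundamental domain. The Coxeter graph records an edge labelled $3$ for each cross pair and no edge for $\{1,2\}$ and $\{3,4\}$. This gives the $4$-cycle $1-3-2-4-1$, which is the $\At$ diagram. Since the Gram matrix $(\nu_i\cdot\nu_j)$ is positive semidefinite of rank $3$ (because $\sum\nu_i=0$), $W$ is affine rather than hyperbolic. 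By Coxeter's classification it is the affine Weyl group $\At$, and $T$ is congruent, up to homothety, to its standard alcove.

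\textbf{The translation lattice.} I will make this concrete rather than cite it. The stabilizer in $W$ of a vertex, say $v_1$, is generated by the three faces through $v_1$. Its diagram is the path obtained from the $4$-cycle by deleting node $1$, namely $3-2-4$, which is of type $A_3$ and has order $24$. The translation subgroup is generated by products $s_i s_i'$ of reflections in parallel mirrors. For face $\varphi_i$, the parallel mirror through the opposite vertex $v_i$ gives a translation by twice the height along $\nu_i$. The height is $3\Vol/c=\sqrt2$, so this translation has length $2\sqrt2$, namely $2\sqrt2\,\nu_i=2(\mp1,0,\mp1)$ and so on. Applying the finite group (signed permutations realized by the mirrors) yields all vectors $(\pm2,\pm2,0)$ up to permutation. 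These span $\{x\in(2\mathbb Z)^3:\sum x_i/2\equiv0 \bmod 2\}$, which is fcc with conventional cube edge $L=4$. The main obstacle is checking that these translations exhaust the translation subgroup, with nothing finer. The cleanest route is the standard fact that for an affine Weyl group $W=Q^\vee\rtimes W_0$, the translation lattice is the coroot lattice of the finite group $W_0$. Here the roots are the vectors $2\sqrt2\,\nu_i$ and their $W_0$-images, so the lattice is exactly the one generated above.

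\textbf{Volume.} Since $W=\Lambda\rtimes W_0$ with $|W_0|=|W(A_3)|=24$, a fundamental domain of $\Lambda$ is the union of $24$ copies of $T$, so $\Vol(\RR^3/\Lambda)=24\Vol(T)$. The fcc primitive cell has volume $L^3/4$. Hence $\Vol(T)=L^3/96$, and with $L=4$ this gives $64/96=2/3$, matching \eqref{eq:Tdata}. This equality also serves as an independent check that no finer translations were missed in the second step.
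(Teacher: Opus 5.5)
Your proposal is correct and follows the paper's route: the dihedral angles $\pi/2,\pi/3$ give the $4$-cycle $1-3-2-4-1$ with all labels $3$, hence $\At$, and then $|W(A_3)|=24$ with four primitive cells per fcc cube gives $\Vol(T)=L^{3}/96$. You go beyond the paper by writing down the translations $2\sqrt2\,\nu_i$, which reproduce its spanning set $(2,2,0),(2,0,2),(0,2,2)$, and by using the covolume equality $24\Vol(T)=16$ to rule out a finer lattice. The one step you leave implicit is that the plane through $v_i$ parallel to $\varphi_i$ is a mirror. It holds because $\nu_i=-\sum_{j\neq i}\nu_j$ is, up to sign, the highest root of the $A_3$ system generated by the three faces at $v_i$, so that reflection lies in the stabilizer of $v_i$ and hence in $W$.
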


This is exactly the cell referred to in \cite[p.~545]{GKMK} and in
Brakke's disphenoid families \cite{BrakkeDisph}: $T$ \emph{is} the
tetragonal disphenoid in which the \DD{} patch is inscribed. The
\PP{} surface of Proposition~\ref{prop:lattice}(i) is also invariant
under $W$, because the $\widetilde A_{3}$ translation lattice
(spanned by $(2,2,0),(2,0,2),(0,2,2)$ when $L_{\DD}=4$) is an
index-two sublattice of the simple cubic lattice $2\mathbb{Z}^{3}$, and
$W$ is a subgroup of index $4$ in the symmetry group $Pm\bar3m$ of
$\PP$ with cube edge $2$. All four faces of $T$ are $\{110\}$ mirror
planes of both lattices, which is why the same disphenoid supports
patches of both surfaces.

\section{Corners, vertices, and flat competitors}

\begin{proposition}[Corners]\label{prop:corners}
Every corner of every Smyth disk in $T$ is the midpoint of an edge of
$T$. Explicitly, with the labelling of \eqref{eq:verts}:
\begin{align}
\partial\Sigma^{*}_{A}&:\quad
\left(\tfrac12,\tfrac12,0\right),\
\left(\tfrac12,-\tfrac12,0\right),\
\left(-\tfrac12,-\tfrac12,0\right),\
\left(-\tfrac12,\tfrac12,0\right),\\
\partial\Sigma^{*}_{B}&:\quad
\left(0,0,\tfrac12\right),\
\left(\tfrac12,-\tfrac12,0\right),\
\left(0,0,-\tfrac12\right),\
\left(-\tfrac12,\tfrac12,0\right),\\
\partial\Sigma^{*}_{C}&:\quad
\left(0,0,\tfrac12\right),\
\left(\tfrac12,\tfrac12,0\right),\
\left(0,0,-\tfrac12\right),\
\left(-\tfrac12,-\tfrac12,0\right).
\end{align}
There are six edge midpoints in all, and each is a corner of exactly two
of the three disks, accounting for all twelve corners.
\end{proposition}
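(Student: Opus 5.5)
The plan is to locate each corner combinatorially first, and then pin it down by symmetry. A corner of $\Sigma^{*}$ is the common endpoint of two consecutive free-boundary arcs. By Lemma~\ref{lem:ambient} and Theorem~\ref{thm:ident}, these two arcs lie in the faces $\varphi_i,\varphi_j$ for a pair $\{i,j\}$ that is consecutive in the cyclic order $\sigma$ defining the disk. Hence the corner lies in $\varphi_i\cap\varphi_j$. Since $\Sigma^{*}\subset T$, it lies on the closed edge of $T$ shared by these faces, namely the edge joining the two vertices $v_k,v_l$ with $\{k,l\}=\{1,2,3,4\}\setminus\{i,j\}$. The four consecutive pairs of a $4$-cycle are distinct, so each disk has exactly one corner on each of four distinct edges. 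This reduces the problem to showing that this unique corner is the midpoint of its edge.

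For the midpoint claim I will use the rotation subgroup $D_2=\{E,C_2(z),C_2',C_2''\}$. By Lemma~\ref{lem:contour}(c), it stabilizes all three contour classes, and hence (Theorem~\ref{thm:ident}) all three disks. It acts on indices as the three double transpositions $(1\,2)(3\,4)$, $(1\,3)(2\,4)$, $(1\,4)(2\,3)$. For a consecutive pair $\{i,j\}$, take the element $g\in D_2$ acting as $(i\,j)(k\,l)$. Then:
\begin{itemize}
\item $g$ maps the disk to itself;
\item $g$ preserves the pair of faces $\{\varphi_i,\varphi_j\}$, so it permutes the corners and sends the corner on edge $v_kv_l$ to a corner on the same edge, which must therefore be fixed by $g$;
\item $g$ swaps $v_k$ and $v_l$, so it reverses the edge, and its only fixed point on that edge is the midpoint $\tfrac12(v_k+v_l)$.
\end{itemize}
So every corner is an edge midpoint. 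The one point needing care is that "the disk" is genuinely $g$-invariant as a set inside $T$, not merely up to the homothety. This holds because the homothety centre is the incenter $0$, which every element of $D_{2d}$ fixes.

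The explicit lists then follow by computation. For $\Sigma^{*}_A$, with $\sigma=(1,3,2,4)$, the consecutive pairs are the four cross pairs, giving the edges $v_2v4$, $v_1v_4$, $v_1v_3$, $v_2v_3$ and the midpoints $(\pm\tfrac12,\pm\tfrac12,0)$. For $\Sigma^{*}_B$, with $\sigma=(1,2,3,4)$, the pairs $\{1,2\},\{2,3\},\{3,4\},\{4,1\}$ give the edges $v_3v_4$, $v_1v_4$, $v_1v_2$, $v_2v_3$ and the midpoints $(0,0,-\tfrac12)$, $(\tfrac12,-\tfrac12,0)$, $(0,0,\tfrac12)$, $(-\tfrac12,\tfrac12,0)$. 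The case $\Sigma^{*}_C$, with $\sigma=(1,3,4,2)$, is analogous. For the counting statement:
\begin{itemize}
\item the two long edges, coming from the pairs $\{1,2\}$ and $\{3,4\}$, occur in $B$ and $C$ but not in $A$;
\item each cross edge occurs in $A$ and in exactly one of $B,C$, because the remaining cross pairs split as $\{2,3\},\{1,4\}$ for $B$ and $\{1,3\},\{2,4\}$ for $C$.
\end{itemize}
This gives $2\cdot 6=12$ corners in total. I expect the main obstacle to be justifying that the corner genuinely lies on the edge, rather than the two arcs meeting elsewhere or degenerating at a vertex. The first line of attack is the boundary dictionary: each straight side of $Q$ maps to a single planar arc, and each vertex of $Q$ maps to a single boundary point in both adjacent planes. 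Together with $\Sigma^{*}\subset T$, this forces the corner onto the closed edge, and $g$-invariance then excludes the endpoints.
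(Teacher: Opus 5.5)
Your proposal is correct and follows the paper's proof: each corner is placed on the edge $\varphi_i\cap\varphi_j$ shared by two consecutive faces, and a stabilizing rotation that swaps that edge's endpoints forces the unique corner there to be its midpoint. Your uniform choice of the double transposition $(i\,j)(k\,l)\in D_2$ for each consecutive pair is a tidier packaging of the paper's case-by-case use of $C_2'$, $C_2(z)$ and $C_2''$ (the paper obtains $\Sigma^{*}_{C}$ as the mirror image of $\Sigma^{*}_{B}$ under $\sigma_{y=0}$); you also spell out the two-disks-per-midpoint count, which the paper leaves to its explicit lists.
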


\begin{proof}
A corner is the meeting point of two consecutive boundary arcs, hence
lies on the edge shared by the two corresponding faces: for
$\Sigma^{*}_{A}$ the cycle $(1,3,2,4)$ gives the edges
$v_{1}v_{3},v_{1}v_{4},v_{2}v_{4},v_{2}v_{3}$, all of length $\sqrt3$;
for $\Sigma^{*}_{B}$ the cycle $(1,2,3,4)$ gives
$v_{1}v_{2},v_{1}v_{4},v_{3}v_{4},v_{2}v_{3}$, i.e.,\ both edges of
length $2$ and two of length $\sqrt3$.

That each corner is the \emph{midpoint} follows from the stabilizer. For
$\Sigma^{*}_{A}$, the rotation $(x,y,z)\mapsto(y,x,-z)\in D_{2d}$
preserves the disk and fixes the edge $v_{1}v_{3}$ setwise while
interchanging its endpoints; since the disk has a single corner on that
edge, that corner is fixed, hence is the midpoint. For $\Sigma^{*}_{B}$,
whose stabilizer is $\{E,C_{2}(z),C_{2}',C_{2}''\}$: $C_{2}(z)$ reverses
$v_{1}v_{2}$ and $v_{3}v_{4}$, forcing the corners there to be
$(0,0,\pm\tfrac12)$; and
$C_{2}'':(x,y,z)\mapsto(-y,-x,-z)$ interchanges $v_{1}$ and $v_{4}$,
forcing the corner on $v_{1}v_{4}$ to be its midpoint
$(\tfrac12,-\tfrac12,0)$. The remaining corner follows by symmetry.
$\Sigma^{*}_{C}$ is the image of $\Sigma^{*}_{B}$ under
$\sigma_{y=0}$.
\end{proof}

\begin{corollary}[Flat competitors]
The four corners of $\Sigma^{*}_{A}$ are coplanar, forming the unit
square in $\{z=0\}$, of area $1$; those of $\Sigma^{*}_{B}$ are coplanar
in $\{x+y=0\}$, forming a rhombus with diagonals $1$ and $\sqrt2$, of
area $\sqrt2/2$; similarly $\Sigma^{*}_{C}$ in $\{x-y=0\}$. In each case
the flat quadrilateral has its boundary on $\partial T$, so is an
admissible competitor, and
\[
\frac{\Area(\Sigma^{*}_{A})}{1}=\frac{K}{K'}=0.7817009613,
\qquad
\frac{\Area(\Sigma^{*}_{B})}{\sqrt2/2}=\frac{K'}{\sqrt2\,K}=0.9045745319,
\]
savings of $21.830\%$ and $9.543\%$ respectively.
\end{corollary}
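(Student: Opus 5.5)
The proof has three parts: show the corners are coplanar, compute the flat areas, and check admissibility.

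\emph{Coplanarity.} I use the explicit corner lists of Proposition~\ref{prop:corners}. For $\Sigma^{*}_{A}$ the four points $(\pm\tfrac12,\pm\tfrac12,0)$ all have $z=0$. In cyclic order they form a square of side $1$, so its area is $1$. For $\Sigma^{*}_{B}$ the corners $(0,0,\pm\tfrac12)$ and $\pm(\tfrac12,-\tfrac12,0)$ all satisfy $x+y=0$. The diagonal joining $(0,0,\tfrac12)$ and $(0,0,-\tfrac12)$ has length $1$. The diagonal joining $(\tfrac12,-\tfrac12,0)$ and $(-\tfrac12,\tfrac12,0)$ has length $\sqrt2$. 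The two diagonals are orthogonal and bisect each other at the origin, so the quadrilateral is a rhombus of area $\tfrac12\cdot1\cdot\sqrt2=\sqrt2/2$. The case of $\Sigma^{*}_{C}$ follows by applying $\sigma_{y=0}$, which sends the plane $x+y=0$ to $x-y=0$.

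\emph{Admissibility.} This is the only point needing care. We must show that each flat quadrilateral lies in $T$ and that its four sides lie in $\partial T$. Consecutive corners lie on two edges of $T$ sharing a common face. For $\Sigma^{*}_{A}$, the cycle $(1,3,2,4)$ gives the consecutive edges $v_1v_3,v_1v_4$, and these share face $\varphi_2=v_1v_3v_4$; the other consecutive pairs work the same way. For $\Sigma^{*}_{B}$ the consecutive edges are $v_1v_2,v_1v_4,v_3v_4,v_2v_3$, and the shared faces are $\varphi_3,\varphi_2,\varphi_1,\varphi_4$. Each face is convex, so the segment between two consecutive corners lies in the common face. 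Each side therefore lies in $\partial T$. Since $T$ is convex, the planar quadrilateral, as the convex hull of its corners, lies in $T$. So it is a disk in $T$ with boundary on $\partial T$, which is the admissible class.

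\emph{Ratios.} Dividing the areas from Corollary~\ref{cor:areas} by the flat areas gives $K/K'$ and $\bigl(K'/(2K)\bigr)\big/\bigl(\sqrt2/2\bigr)=K'/(\sqrt2 K)$. Evaluating these with the numerical values $K/K'=0.78170\ldots$ and $K'/(2K)=0.63963\ldots$ gives the ratios and the stated percentages. Nothing here is an obstacle; the only step needing a moment's thought is the admissibility argument above.
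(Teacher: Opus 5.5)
Your proof is correct and follows the paper's route: the corollary is read off from the explicit corner list of Proposition~\ref{prop:corners} and the areas of Corollary~\ref{cor:areas}, with the rhombus area computed from its perpendicular diagonals, just as in the paper's verification code and the bimedian argument of Proposition~\ref{prop:varignon}. Your admissibility paragraph, showing that each side lies in the face shared by the two edges carrying its endpoints so that the boundary lies on $\partial T$, spells out what the paper asserts without comment.
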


\begin{proposition}[Volume bisection]
Each of the three disks bisects $\Vol(T)=2/3$. For $\Sigma^{*}_{A}$ the
isometry $S_{4}$ exchanges the two complementary solids; for
$\Sigma^{*}_{B},\Sigma^{*}_{C}$ the rotation $C_{2}(z)$ does, since it
reverses the cyclic order of the four boundary arcs and hence the
orientation of the disk while being orientation preserving on $\RR^{3}$.
Consequently the volume constraint is inactive, the Lagrange multiplier
vanishes, and $H\equiv0$ rather than $H=\text{const}$.
\end{proposition}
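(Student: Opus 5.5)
The plan is to produce, for each disk, an isometry $\phi$ of $T$ that preserves the disk but reverses its co-orientation. Such a $\phi$ maps each of the two components of $T\setminus\Sigma^{*}$ onto the other, and since $\phi$ preserves volume the components have equal volume $\tfrac13$. Every ingredient has been established earlier: $T$ is embedded in the disk picture and $D_{2d}$ acts on the three disks (Lemma~\ref{lem:contour}(c), Theorem~\ref{thm:ident}), the stabilizers are known, and the corners have been located (Proposition~\ref{prop:corners}). Because each disk is embedded with boundary on $\partial T$, it separates $T$ into exactly two components. An isometry preserving the disk either fixes both sides or swaps them, according to whether it preserves or reverses the unit normal.

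For $\Sigma^{*}_{A}$ I take $\phi=R=S_{4}$, which lies in $\mathrm{Stab}(\mathrm{a})=D_{2d}$. By Lemma~\ref{lem:contour}(c), $R$ acts on the cyclic word $(1,3,2,4)$ as a rotation, so it preserves the cyclic orientation of $\partial\Sigma^{*}_{A}$. Since $\det R=-1$, the induced map on the disk is orientation preserving while $R$ reverses the orientation of $\RR^{3}$. It therefore sends the normal $n$ to $-n$: if $(e_{1},e_{2})$ is an oriented frame of the disk, then $(Re_{1},Re_{2},Rn)$ must be negatively oriented, forcing $Rn=-n'$ with $n'$ the normal at the image point. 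Concretely, the component containing the edge $v_{1}v_{2}$ (at $z=\tfrac12$) is carried to the one containing $v_{3}v_{4}$ (at $z=-\tfrac12$), because $R$ maps $v_{1}v_{2}$ to $v_{4}v_{3}$. This gives a clean check that the sides really are swapped.

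For $\Sigma^{*}_{B}$ I take $\phi=C_{2}(z)$, which acts on indices as $(1\,2)(3\,4)$. It maps the cycle $(1,2,3,4)$ to $(2,1,4,3)$, which is the reversal $(4,3,2,1)$ up to rotation. So $\phi$ reverses the boundary orientation, hence the disk's orientation, while $\det C_{2}(z)=+1$. By the same frame argument the normal flips and the sides are exchanged. As an independent check, the corners $(0,0,\pm\tfrac12)$ lie on the edges $v_{1}v_{2}$ and $v_{3}v_{4}$, so the disk separates the vertex pair $\{v_{1},v_{4}\}$ from $\{v_{2},v_{3}\}$, and $C_{2}(z)$ sends $v_{1}\mapsto v_{2}$ and $v_{4}\mapsto v_{3}$. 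The case $\Sigma^{*}_{C}$ follows by applying $\sigma_{y=0}$, which exchanges $\Sigma^{*}_{B}$ and $\Sigma^{*}_{C}$.

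The Lagrange-multiplier remark is then formal: among surfaces bisecting the volume, the disk is already critical for area without the constraint, since $H\equiv0$ by construction. I expect the main obstacle to be rigor about which vertices lie on which side, which is what makes the side-swap unambiguous. I would settle it by the corner data: the boundary of each disk meets exactly four of the six edges, at their midpoints, so each remaining edge lies entirely on one side. Those two edges determine the two components, and the chosen isometry exchanges them.
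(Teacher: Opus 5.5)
Your proof is correct and is essentially the paper's argument: $R=S_{4}$ carries the cyclic word $(1,3,2,4)$ to a rotation of itself while $\det R=-1$, and $C_{2}(z)$ carries $(1,2,3,4)$ to a rotation of its reversal while $\det C_{2}(z)=+1$, so in each case the unit normal is reversed and the two complementary solids are exchanged. One small slip is in your auxiliary check for $\Sigma^{*}_{B}$: $\partial\Sigma^{*}_{B}$ also passes through the midpoints of $v_{1}v_{4}$ and $v_{2}v_{3}$, so the disk separates $\{v_{1},v_{3}\}$ from $\{v_{2},v_{4}\}$ (the two untouched edges, as your own last paragraph would give), not $\{v_{1},v_{4}\}$ from $\{v_{2},v_{3}\}$ --- this is harmless, because $C_{2}(z)$ acts as $(1\,2)(3\,4)$ and exchanges the correct pairing as well.
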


\section{Verification notebook}

The following \textsc{Mathematica} code is exact and symbolic throughout;
it runs in a second.

\begin{verbatim}
ClearAll["Global`*"];
(* M.1  the disphenoid T *)
vv = {{1,0,1/2},{-1,0,1/2},{0,1,-1/2},{0,-1,-1/2}};
Print["edge lengths  = ", Sort[Norm[#[[1]] - #[[2]]] & /@ Subsets[vv, {2}]]];
volT = Abs[Det[{vv[[2]] - vv[[1]], vv[[3]] - vv[[1]], vv[[4]] - vv[[1]]}]]/6;
Print["Vol  = ", volT, "   (expect 2/3)"];
nu = {{-1,0,-1},{1,0,-1},{0,-1,1},{0,1,1}}/Sqrt[2];    
Print["sum nu = ", Simplify[Total[nu]]];
Print["Gram   = ", MatrixForm[Simplify[Outer[Dot, nu, nu, 1]]]];
Print["dihedrals/Pi = ", Simplify[ArcCos[-#]/Pi] & /@ DeleteDuplicates[
Flatten[Table[nu[[i]] . nu[[j]], {i, 4}, {j, i + 1, 4}]]]];
cT = Sqrt[2]; (*face area*)
rT = 3 volT/(4 cT); (*inradius*)
Print["c, r, 4r^2 = ", {cT, rT, Simplify[4 rT^2]}]; 

R    = {{0,1,0},{-1,0,0},{0,0,-1}};
SigX = DiagonalMatrix[{-1,1,1}];  SigY = DiagonalMatrix[{1,-1,1}];
C2z  = DiagonalMatrix[{-1,-1,1}];
C2p  = {{0,1,0},{1,0,0},{0,0,-1}};  C2pp = {{0,-1,0},{-1,0,0},{0,0,-1}};
grp  = {IdentityMatrix[3], R, C2z, R.R.R, SigX, SigY, C2p, C2pp};

permF[M_] := Table[First@FirstPosition[nu, Simplify[M . nu[[i]]]], {i,4}];
permV[M_] := Table[First@FirstPosition[vv, Simplify[M . vv[[i]]]], {i,4}];
Print["R on faces   = ", permF[R], "   (expect {4,3,1,2})"];
Print["R on vertices= ", permV[R], "   (must agree)"];
Print["sigY == R^2.sigX ? ", SigY === R.R.SigX];

(* classes as diagonal-pairings; action and determinant *)
pair[w_] := Sort[Sort /@ {{w[[1]],w[[3]]}, {w[[2]],w[[4]]}}];
cls = pair /@ {{1,3,2,4},{1,2,3,4},{1,2,4,3}};
Print["classes = ", cls];
act[M_, c_] := Sort[Sort /@ (permF[M][[#]] & /@ # & /@ c)];
Do[Print[Det[g], "  ", Table[First@FirstPosition[cls, act[g, c]], {c, cls}]], {g, grp}]; 
(* expect: every $\det=+1$ row prints {1,2,3}; every $\det=-1$ row prints {1,3,2} *)

(* M.2  the three Smyth contours *)
Qof[cyc_] := Most[FoldList[Plus, {0,0,0}, cT nu[[cyc]]]];
dists[p_] := Sort[Simplify[Norm[#[[1]]-#[[2]]]& /@ Subsets[p,{2}]]];
angs[p_]  := Table[ArcCos[Normalize[p[[Mod[k-1,4,1]]]-p[[k]]].
                   Normalize[p[[Mod[k+1,4,1]]]-p[[k]]]]/Degree,{k,4}];
tet = {{0,0,0},{1,0,1},{1,1,0},{0,1,1}};
oct = 2 {{0,0,1/2},{1/2,-1/2,1/2},{1/2,0,1},{1/2,1/2,1/2}};
Do[Print[cyc, "  angles = ", Simplify[angs[Qof[cyc]]],
         "  dists = ", dists[Qof[cyc]]],
   {cyc, {{1,3,2,4},{1,2,3,4},{1,3,4,2}}}];
Print["tet: ", dists[tet], "   == Q(1,3,2,4)? ",
      dists[tet] === dists[Qof[{1,3,2,4}]]];
Print["2*oct: ", dists[oct], "   == Q(1,2,3,4)? ",
      dists[oct] === dists[Qof[{1,2,3,4}]]];
      
(* M.3  shape constants and the two areas *)
KK = EllipticK[1/4];  KP = EllipticK[3/4];
sideTet = Sqrt[2];        areaTet = KP/KK;         
sideOct = 1/Sqrt[2];      areaOct = (1/2) KK/KP;    
Print["contour sides  = ", {sideTet, sideOct}];
Print["contour areas  = ", N[{areaTet, areaOct}, 15]];
kTet = areaTet/sideTet^2;   kOct = areaOct/sideOct^2;
Print["kappa_tet, kappa_oct = ", N[{kTet, kOct}, 20]];
Print["kappa_tet == K'/(2K) ? ", Simplify[kTet - KP/(2 KK)] === 0];
Print["kappa_oct == K/K'    ? ", Simplify[kOct - KK/KP]    === 0];
Print["kappa product (expect 1/2) = ", Simplify[kTet kOct]];
areaA = 4 rT^2/kTet;  areaB = 4 rT^2/kOct;
Print["Area A = ", N[areaA, 21], "  = K/K'  ? ", Simplify[areaA - KK/KP] === 0];
Print["Area B = ", N[areaB, 21], "  = K'/2K ? ", Simplify[areaB - KP/(2 KK)] === 0];
Print["2 A B (expect 1) = ", Simplify[2 areaA areaB]];
Print["cross-check: ", 
Simplify[{areaA - 4 (KK/(4 KP)), areaB - 2 (KP/(4 KK))}] === {0, 0}];

(* M.4  lattice bookkeeping, both assignments *)
sigP = 3 KK/KP; chiP = -4;  sigD = 3 KP/KK; chiD = -16;
check[area_, sig_, chi_] := Module[{L},
  L = First[L /. NSolve[area/(sig L^2) == volT/L^3 && L>0, L]];
  {L, -2 Pi chi volT/L^3}];
Print["A with P: ", N[check[areaA,sigP,chiP],12], "  (want {2, 2Pi/3})"];
Print["B with D: ", N[check[areaB,sigD,chiD],12], "  (want {4, Pi/3})"];
Print["A with D: ", N[check[areaA,sigD,chiD],12], "  vs L^3=32 -> ",
      N[32^(1/3),12]];
Print["B with P: ", N[check[areaB,sigP,chiP],12], "  vs L^3=16 -> ",
      N[16^(1/3),12]];

(* M.5  corners and flat competitors *)
mid[i_,j_] := (vv[[i]]+vv[[j]])/2;
cA = {mid[1,3],mid[1,4],mid[2,4],mid[2,3]};
cB = {mid[1,2],mid[1,4],mid[3,4],mid[2,3]};
cC = {mid[1,2],mid[1,3],mid[3,4],mid[2,4]};
Print["corners A = ", cA];  Print["corners B = ", cB];
Print["corners C = ", cC];
Print["all 12 are edge midpoints, 6 distinct, each twice: ",
      Sort[Tally[Sort /@ Join[cA,cB,cC]][[All,2]]]];
planar[p_] := Simplify[Det[{p[[2]]-p[[1]],p[[3]]-p[[1]],p[[4]]-p[[1]]}]];
Print["planarity (A,B,C) = ", {planar[cA],planar[cB],planar[cC]}];
flat[p_] := Norm[Cross[p[[3]]-p[[1]],p[[4]]-p[[2]]]]/2;
Print["flat areas = ", Simplify[{flat[cA],flat[cB],flat[cC]}]];
Print["savings = ", N[{1-areaA/flat[cA], 1-areaB/flat[cB]},12]];

(* M.6  total curvature *)
Print["Int K = ", N[{-2 Pi/3, -Pi/3},15],
      "   (= -2 Pi (2 beta - 1), -Pi/3 at beta = 2/3)"];
\end{verbatim}

\section{Ledger of what is and is not established}

\begin{enumerate}
\item Theorem~\ref{thm:main} concerns Smyth's three disks, i.e.,\ the three
conjugates of the Plateau solutions in the three normal quadrilaterals. We
do not claim that $T$ contains no \emph{other} free-boundary minimal disk;
no such classification is known to us.
\item The contour identification (Lemma~\ref{lem:contour}): established,
by a rigid six-distance computation.
\item The surface identification --- $\Sigma^{*}_{A}$ a \PP{} patch,
$\Sigma^{*}_{B,C}$ \DD{} patches (Theorem~\ref{thm:ident}): established,
modulo the classical solutions \eqref{eq:areatet}--\eqref{eq:areaoct} of
the two four-edges Plateau problems \cite{Schwarz,FinchDP,GKMK,GK}. The
reverse direction needs no uniqueness theorem; the forward direction is
also available via \cite{NitscheUniq}.
\item The exact areas \eqref{eq:areas}: established, and in independent
agreement with Brakke's Surface Evolver values to $15$ digits.
\item The lattice constants $L_{\PP}=2$, $L_{\DD}=4$ and the patch counts
$4$ and $2$ (Proposition~\ref{prop:lattice}): established, with the
swapped assignment ruled out quantitatively.
\item The corner list (Proposition~\ref{prop:corners}): established.
\item Conjecture 6.1 of \cite{FinchReg} is \emph{not} addressed in the
abstract; only its endpoint $\beta=2/3$ is, and that concretely. The
real-analyticity of $\beta\mapsto\Sigma^{*}_{\beta}$ remains open, as
does any claim about the interior members.
\item Everything concerns stationarity. Global minimality among
volume-halving competitors is untouched. Note however that
$\Area(\Sigma^{*}_{A})=0.7817>\Area(\Sigma^{*}_{B})=0.6396$, so the
distinguished disk is \emph{not} the least-area member of the three ---
a point worth emphasizing, since symmetry might suggest otherwise.
\end{enumerate}

\section{Addendum A: the disphenoid $T$ as a space-filler}
\label{sec:addmA}

Nothing in this addendum is needed for the proof of the Main Theorem.
It is included because the tetrahedron $T$ of \eqref{eq:verts} turns out
to be, independently of any question about minimal surfaces, one of the
most eminent tetrahedra in the literature, and because three of
its known properties furnish independent confirmations of results proved
above --- in one case a one-line proof of a proposition for which we gave
a symmetry argument.

Throughout, $T$ is normalized as in the body of the paper:
\begin{equation}\label{eq:vertsA}
	v_{1}=\left(1,0,\tfrac12\right),\quad
	v_{2}=\left(-1,0,\tfrac12\right),\quad
	v_{3}=\left(0,1,-\tfrac12\right),\quad
	v_{4}=\left(0,-1,-\tfrac12\right),
\end{equation}
with $|v_{1}v_{2}|=|v_{3}v_{4}|=2$ (dihedral angle $\pi/2$), the other
four edges of length $\sqrt3$ (dihedral angle $\pi/3$),
$\Vol(T)=\tfrac23$, $S(T)=4\sqrt2$, $r_{T}=\sqrt2/4$.

\subsection{Five names for one tetrahedron}
\label{subsec:names}

\begin{table}[h]
	\centering
	\caption{The disphenoid $T$ in the space-filling literature.}
	\label{tab:names}
	\begin{tabular}{ll}
		\toprule
		Name & Source\\
		\midrule
		Sommerville No.~1 & numbering of Goldberg \cite{Goldberg74} for
		\cite{Sommerville23a,Sommerville23b}\\
		Hill, first type & \cite{Hill96}, via \cite[Table~I]{Goldberg74}\\
		Baumgartner $T$ & \cite{Baum68,Baum71}, via \cite[Table~I]{Goldberg74}\\
		``Regular sphenoid'' & \cite[Table~I]{Goldberg74}\\
		Goldberg first family, $a=1/3$ & \cite[\S3]{Goldberg74},
		\cite[Prop.~3.5]{BDKS20}\\
		Type (b) & \cite[Table~3]{BDKS20}\\
		Sommerville's prism family (i) & \cite[Fig.~15]{Senechal81}\\
		Senechal's ``second'' tetrahedron & \cite[Fig.~14(ii)]{Senechal81}\\
		Alcove of $\At$ & \S\ref{subsec:coxeter} below\\
		\bottomrule
	\end{tabular}
\end{table}

\begin{remark}[A numbering trap]\label{rem:numbering}
	Senechal's numbering of the four Sommerville tetrahedra
	\cite[Fig.~14]{Senechal81} is \emph{not} Goldberg's
	\cite[Table~I]{Goldberg74}, which is the one used by
	Bongiovanni--Diaz--Kakkar--Sothanaphan \cite{BDKS20} and hence by us.
	Senechal's ``first'' tetrahedron is the $\tfrac1{24}$-cube tetrahedron
	of her Figure~10, which is Goldberg's Sommerville No.~2; her ``second''
	is our $T$, which is Goldberg's Sommerville No.~1. The dictionary is
	\[
	\text{Senechal (i),(ii),(iii),(iv)}\ \longleftrightarrow\
	\text{Goldberg No.~2, No.~1, No.~3, No.~4}.
	\]
	Anyone citing ``Sommerville's first tetrahedron'' should say which
	numbering is meant.
\end{remark}

Senechal \cite[p.~237]{Senechal81} records that Sommerville's interest in
the subject was provoked by a student's error: asked to compute the
volume of a pyramid, a candidate asserted that the three tetrahedra into
which a triangular prism can be divided are congruent, rather than merely
equal in volume. ``It was an interesting question to determine the
conditions in order that the three tetrahedra should be congruent, and
this led to the wider problem.'' Sommerville's four families of such
prisms \cite[Fig.~15]{Senechal81} have as their family (i) the relations
$a'=b=c=d$, $3a^{2}=4b^{2}$, i.e.,\ $a:b=2:\sqrt3$ --- a single
tetrahedron up to similarity, namely $T$. \emph{The disphenoid of this
	paper is the tetrahedron that started the subject of space-filling
	tetrahedra.}

\subsection{Elementary geometry: the circumscribed box}
\label{subsec:box}

A tetrahedron is a \emph{disphenoid} (isosceles tetrahedron) iff opposite
edges are equal, iff it is inscribed in a rectangular box with its six
edges as box face-diagonals, iff all four faces are congruent, iff
centroid $=$ incenter $=$ circumcenter. For $T$ the box is determined by
$p^{2}+q^{2}=4$, $q^{2}+r^{2}=r^{2}+p^{2}=3$, giving
\begin{equation}\label{eq:box}
	\boxed{\ T \text{ is the disphenoid of a } \sqrt2\times\sqrt2\times1
		\text{ box }B,\qquad \Vol(B)=2=3\Vol(T).\ }
\end{equation}
Explicitly, rotating by $\pi/4$ about the $z$-axis,
$(x,y)\mapsto\big(\tfrac{x+y}{\sqrt2},\tfrac{y-x}{\sqrt2}\big)$, carries
\eqref{eq:vertsA} to the four alternate vertices
$\big(\pm\tfrac1{\sqrt2},\mp\tfrac1{\sqrt2},\tfrac12\big)$,
$\big(\pm\tfrac1{\sqrt2},\pm\tfrac1{\sqrt2},-\tfrac12\big)$ of
$B=[-\tfrac1{\sqrt2},\tfrac1{\sqrt2}]^{2}\times[-\tfrac12,\tfrac12]$.
That $B$ has a square cross-section is the meaning of ``\emph{tetragonal}''
disphenoid; the regular tetrahedron is the case $B=$ cube.

The three \emph{bimedians} of $T$ (segments joining midpoints of opposite
edges) are the three edge-directions of $B$:
\begin{equation}\label{eq:bimedians}
	m_{12|34}=1,\qquad m_{13|24}=m_{14|23}=\sqrt2,
	\qquad\text{mutually perpendicular.}
\end{equation}
The six edge midpoints of $T$ are the six face centers of $B$; they form
a square bipyramid (a non-regular octahedron) with equatorial square of
side $1$ in $\{z=0\}$ and apices $(0,0,\pm\tfrac12)$.

This yields a second, computation-free proof of
Proposition~\ref{prop:corners} and its corollary.

\begin{proposition}[Corners, via Varignon]\label{prop:varignon}
	For each of the three ways of splitting $\{v_{1},v_{2},v_{3},v_{4}\}$
	into two pairs, the midpoints of the four edges joining the two pairs
	form a parallelogram whose sides are parallel to, and half the length of,
	the two \emph{excluded} opposite edges, and whose diagonals are two of
	the three bimedians \eqref{eq:bimedians}. These three parallelograms are
	exactly the corner-sets of Smyth's three disks, and since the bimedians
	are mutually perpendicular their areas are $\tfrac12 d_{1}d_{2}$:
	\[
	\begin{array}{lccc}
	\text{pairing} & \text{diagonals} & \text{shape} & \text{area}\\[2pt]
	\{v_{1}v_{2}\}\,|\,\{v_{3}v_{4}\} & \sqrt2,\ \sqrt2
	& \text{unit square} & 1\\
	\{v_{1}v_{3}\}\,|\,\{v_{2}v_{4}\} & 1,\ \sqrt2
	& \text{rhombus} & \sqrt2/2\\
	\{v_{1}v_{4}\}\,|\,\{v_{2}v_{3}\} & 1,\ \sqrt2
	& \text{rhombus} & \sqrt2/2
	\end{array}
	\]
	The first is a \emph{square} precisely because $v_{1}v_{2}\perp v_{3}v_{4}$,
	which is the geometric content of $\delta_{12}=\delta_{34}=\pi/2$; the
	other two opposite-edge pairs of $T$ are not perpendicular
	($(v_{3}-v_{1})\cdot(v_{4}-v_{2})=-1$), whence rhombi rather than
	rectangles.
\end{proposition}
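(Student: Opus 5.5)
The plan is to work directly with the coordinates \eqref{eq:vertsA} and the classical Varignon fact. For each splitting of the vertices into pairs $\{a,b\}\,|\,\{c,d\}$, the four "cross" edges are $ac,ad,bd,bc$. Their midpoints $\tfrac12(a+c),\tfrac12(a+d),\tfrac12(b+d),\tfrac12(b+c)$ have successive differences $\tfrac12(d-c)$, $\tfrac12(b-a)$, $\tfrac12(c-d)$, $\tfrac12(a-b)$. They therefore form a parallelogram with sides parallel to, and half as long as, the excluded edges $ab$ and $cd$. Its diagonals join $\tfrac12(a+c)$ to $\tfrac12(b+d)$ and $\tfrac12(a+d)$ to $\tfrac12(b+c)$, and these are exactly the bimedians $m_{ac|bd}$ and $m_{ad|bc}$. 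The lengths and mutual perpendicularity of these bimedians are already recorded in \eqref{eq:bimedians}, via the box $B$ of \eqref{eq:box}: bimedians join opposite face centers of $B$. I will verify perpendicularity directly by dot products of the midpoint differences, so as not to lean on the box picture.

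Next I match parallelograms to disks using Proposition~\ref{prop:corners}. By the first paragraph of its proof, the corners of $\Sigma^{*}_{A}$ lie on $v_1v_3,v_1v_4,v_2v_4,v_2v_3$. These are precisely the cross edges of the pairing $\{v_1v_2\}\,|\,\{v_3v_4\}$: the excluded edges are the two edges of length $2$, and the diagonals are $m_{13|24}=m_{14|23}=\sqrt2$. Similarly, the corners of $\Sigma^{*}_{B}$ lie on $v_1v_2,v_1v_4,v_3v_4,v_2v_3$. These are the cross edges of $\{v_1v_3\}\,|\,\{v_2v_4\}$, with diagonals $m_{12|34}=1$ and $m_{14|23}=\sqrt2$. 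The pairing $\{v_1v_4\}\,|\,\{v_2v_3\}$ gives $\Sigma^{*}_{C}$.

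The three splittings are in bijection with the three cyclic orders of Lemma~\ref{lem:contour}, since a cyclic order of faces determines its pair of opposite (non-consecutive) entries. So this bijection exhausts all three disks. Areas then follow from perpendicular diagonals: a parallelogram with orthogonal diagonals $d_1,d_2$ is a rhombus of area $\tfrac12 d_1d_2$. This gives $1$, $\sqrt2/2$, $\sqrt2/2$.

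For the shape claims, a rhombus is a square iff its sides are perpendicular. The sides are parallel to the excluded edges, so the first parallelogram is a square iff $v_1v_2\perp v_3v_4$. This holds, since $(2,0,0)\cdot(0,2,0)=0$, and it reflects $\delta_{12}=\delta_{34}=\pi/2$. For the others, compute $(v_3-v_1)\cdot(v_4-v_2)=(-1,1,-1)\cdot(1,-1,-1)=-1\neq0$, and similarly for the pair $v_1v_4$, $v_2v_3$; hence these are non-square rhombi.

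The only subtle point is one of logical dependence. The midpoint property is not obtained here from Varignon; it is inherited from Proposition~\ref{prop:corners}, and Varignon then gives the shapes. I will state this explicitly so that "computation-free" refers only to the corollary on flat competitors.
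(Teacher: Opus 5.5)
Your proposal is correct and follows the paper's own proof. Like the paper, you use Varignon's theorem for the parallelogram, the definition of a bimedian for the diagonals, and Proposition~\ref{prop:corners} for the identification with the corner sets, and your explicit dot-product checks fill in the shape and area claims; your caveat on logical dependence is also well taken, since the paper's proof likewise takes the midpoint property from Proposition~\ref{prop:corners}, so this argument is not by itself an independent proof of that proposition.
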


\begin{proof}
	The parallelogram statement is Varignon's theorem applied to the skew
	quadrilateral through the four relevant vertices; the diagonal
	identification is immediate from the definition of a bimedian. The
	identification with Smyth's corner sets is
	Proposition~\ref{prop:corners}: a corner lies on the edge shared by two
	consecutive boundary faces, and the three cyclic orders of
	Lemma~\ref{lem:contour} select precisely the three pairings.
\end{proof}

\begin{remark}
	Two further ``magic angles'' appear. The face angles of $T$ are
	$\arccos\tfrac13=70.5288^{\circ}$ (apex, opposite the side of length $2$)
	and $\arccos\tfrac1{\sqrt3}=54.7356^{\circ}$ (base) --- respectively the
	dihedral angle of the regular tetrahedron and the
	$\langle111\rangle$--$\langle100\rangle$ angle. The four $\pi/3$ edges of
	$T$ are $\langle111\rangle$ segments of length $\sqrt3$, i.e.,\ half a
	body diagonal of the \PP{} unit cell of \S\ref{sec:lattice}; the two
	$\pi/2$ edges are $\langle100\rangle$ segments of length
	$2=L_{\PP}$, i.e.,\ full cube edges. The $\langle111\rangle$ directions
	are those of the $\bar3$ axes carrying the eight flat points of
	\PP{} and \DD{} \cite{GK,GKMK}.
\end{remark}

\subsection{Solid angles, and ``filling the space about a point''}

Each vertex of $T$ carries solid angle
\[
\Omega_{i}=\big(\tfrac\pi2+\tfrac\pi3+\tfrac\pi3\big)-\pi
=\frac{\pi}{6}=\frac{4\pi}{24},
\]
the same at all four vertices (a disphenoid is isogonal as well as
isohedral). Thus exactly $24$ copies of $T$ fill the space about each of
its vertices --- the question debated from Potaman and Simplicius through
Averroes and Bacon to Bradwardinus and Regiomontanus, recounted in
\cite[\S1]{Senechal81}. Here the count $24=|W(A_{3})|$ is not accidental;
see \S\ref{subsec:coxeter}.

\subsection{$T$ tiles space, in several essentially different ways}
\label{subsec:tilings}

\paragraph{(a) The prism tiling (proper, orientation-preserving).}
Set $A=v_{1}$, $B=v_{3}$, $C=v_{2}$, $D=v_{4}$, so that
$|AB|=|BC|=|CD|=|DA|=\sqrt3$ and $|AC|=|BD|=2$. Then
$D-A=(-1,-1,-1)$, and the three parallel lines through $A$, $B$, $C$ in
the direction $\langle111\rangle$ are pairwise at distance
\begin{equation}
	\Big\|(B-A)-\frac{(B-A)\cdot(1,1,1)}{3}(1,1,1)\Big\|
	=\Big\|\big(-\frac23,\frac43,-\frac23\big)\Big\|=\frac{2\sqrt6}{3}.
\end{equation}
Hence $T$ together with two further copies, obtained by sliding $A,B,C,D$
one step along the chain, fills the oblique prism $ABCDEF$ over an
equilateral triangle of side $2\sqrt6/3$ lying in a $\{111\}$ plane, with
translation vector $(-1,-1,-1)$ of length $\sqrt3$. The volume check is
\[
\underbrace{\frac{\sqrt3}{4}\Big(\frac{2\sqrt6}{3}\Big)^{2}}_{=2\sqrt3/3}
\cdot\underbrace{\sqrt3}_{\text{axis length}}=2=3\cdot\frac23 .
\]
This is Goldberg's Figure~1 and Bongiovanni et al.'s Figure~1(b)
\cite{Goldberg74,BDKS20}: three copies of $T$ make a triangular prism,
and triangular prisms tile, so $T$ tiles. The three copies are
\emph{properly} congruent, related by a three-fold screw about the
$\langle111\rangle$ axis; the resulting tiling is orientation-preserving.

\paragraph{(b) The reflection tiling (improper).}
All six dihedral angles of $T$ are submultiples of $\pi$, so reflections
in its four faces generate a discrete group with $T$ as fundamental
domain; by \S\ref{sec:alcove} that group is $W(\At)$, with $24$ alcoves
per primitive fcc\ cell and $96$ per conventional cube. This tiling is
\emph{not} orientation-preserving. It is the only one of $T$'s tilings
relevant to the present paper; see \S\ref{subsec:whyreflection}.

\paragraph{(c) Uncountably many others.}
Goldberg \cite{Goldberg74} corrected a longstanding error of Sommerville
on exactly this point: identical inclined prisms can be stacked
end-to-end into infinite prismatic fibers of equilateral cross-section,
and such fibers can be bundled by arbitrary relative sliding and
rotation, so that $\mathbb{E}^{3}$ admits uncountably many --- including
aperiodic --- tilings by $T$, not all of them face-to-face
\cite[Fig.~4]{Goldberg74}, \cite[p.~242]{Senechal81}.

\paragraph{(d) Dissections of $T$.}
From \cite[\S2]{Goldberg74} and \cite[Table~1]{Senechal81}:
\begin{itemize}
	\item the plane through a \emph{long} edge and the midpoint of the
	opposite long edge (for us, the mirror plane $y=0$ or $x=0$) cuts $T$ into
	two Sommerville No.~2 tetrahedra;
	\item the plane through a \emph{short} edge and the midpoint of the
	opposite short edge cuts $T$ into two properly congruent copies of
	Baumgartner's chiral $T_{2}$ (each of which tiles space alone); for the
	pair $v_{1}v_{3}\,|\,v_{2}v_{4}$ that plane is $y-x+2z=0$, and the two
	halves are exchanged by the $C_{2}$ rotation about $\langle110\rangle$;
	\item the six planes from the incenter to the six edges cut $T$ into four
	Sommerville No.~4 tetrahedra; equivalently, joining the vertices to the
	center gives four congruent tetrahedra with symmetry $m$
	\cite[Table~1, item~5]{Senechal81}.
\end{itemize}
The symmetry class of $T$ is $\bar4 2m=D_{2d}$, order $8$
(Senechal's ``$\bar4m$''), the richest class short of the regular
tetrahedron; it admits all the partitions of her classes 1--4
\cite[Table~1]{Senechal81}.

\paragraph{(e) $T$ is one of only four proper face-to-face tilers.}
Sommerville \cite{Sommerville23a} enumerated eleven cases and analyzed
ten, exhibiting four tilers. Edmonds \cite{Edmonds07} closed the
eighty-four-year-old gap by proving that the omitted case~(xi) --- the
tetrahedron with two chains $p,p,p$ and $q,q,q$, all faces isosceles but
\emph{without} mirror symmetry --- never tiles properly and
face-to-face, and that it tiles at all only for $p/q=\sqrt{2/3}$ or
$\sqrt{3/2}$ (Goldberg's $H(1/3)$), improperly. Hence:
\begin{quote}
	\emph{Up to similarity there are exactly four tetrahedra that tile
		$\mathbb{E}^{3}$ properly and face-to-face, and $T$ is one of them.}
\end{quote}
Note the contrast worth recording: $T$ and Sommerville's case (xi)
tetrahedron are the two ``all faces isosceles'' irreducible cases, with
and without mirror symmetry respectively; $T$ tiles both properly
(\S\ref{subsec:tilings}(a)) and improperly
(\S\ref{subsec:tilings}(b)), while case (xi) does neither except at one
isolated ratio.

\subsection{$T$ is the least-area tetrahedral tile}
\label{subsec:isoperimetry}

\begin{thm}[{Bongiovanni--Diaz--Kakkar--Sothanaphan
	\cite[Thm.~11.1]{BDKS20}; orientation-preserving case
	Gallagher et al.\ \cite{Gallagher14}}]\label{thm:leastarea}
	The least-surface-area, face-to-face tetrahedral tile of unit volume is
	uniquely $T$.
\end{thm}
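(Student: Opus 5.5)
The plan is a reduction to a finite, then one-parameter, comparison. Since the statement is scale-invariant, minimize the isoperimetric ratio $\mathcal I(\Delta)=S(\Delta)^{3}/\Vol(\Delta)^{2}$ over tetrahedra $\Delta$ that admit a face-to-face tiling of $\mathbb{E}^{3}$. For $T$ itself, \eqref{eq:Tdata} gives $\mathcal I(T)=(4\sqrt2)^{3}/(2/3)^{2}=288\sqrt2$. The whole proof is to show that every other face-to-face tile has strictly larger $\mathcal I$.

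\textbf{Step 1: local constraint.} In a face-to-face tiling, every edge of a tile is a full edge of each tile around it. The dihedral angles of the tiles incident to an edge therefore sum to $2\pi$. Each such angle is one of the six dihedral angles $\delta_{ij}$ of $\Delta$, and the adjacent tiles must also match edge lengths. This yields integer relations
\[
\sum_{ij} n^{(e)}_{ij}\,\delta_{ij}=2\pi,
\]
one for each edge class $e$. From these I would extract a finite list of admissible ``angle-and-length patterns,'' following the bookkeeping of \cite{BDKS20}. The patterns come either as isolated tetrahedra (Sommerville's tilers, including $T$ and the $\tfrac1{24}$-cube tetrahedron No.~2) or as one-parameter families (Goldberg's three infinite families, with $T$ the member $a=1/3$ of the first). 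Edmonds' theorem \cite{Edmonds07} is what guarantees that no further proper face-to-face tilers exist. The improper face-to-face cases are absorbed by the same angle analysis, since reflection does not change the angle-sum equations.

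\textbf{Step 2: compare the isolated cases.} For each isolated tetrahedron on the list, compute $\mathcal I$ directly from its vertices and check that it exceeds $288\sqrt2$. For example, Sommerville No.~2 is half of $T$ (the dissection in \S\ref{subsec:tilings}(d)). Its volume is $\tfrac13$, and its surface area is $2\sqrt2$ plus the area of the new cutting triangle. This already beats $T$'s ratio only if the cut were free, and it is not.

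\textbf{Step 3: minimize along the families.} For each Goldberg family, write $S(a)$ and $\Vol(a)$ explicitly. Each family is built from a triangular prism sliced into three pieces, so $\Vol(a)$ is elementary and $S(a)$ is a sum of four Heron-type square roots. Then show that $a\mapsto\mathcal I(a)$ has its unique minimum on the admissible interval at $a=1/3$ for the first family, and exceeds $288\sqrt2$ on the other two families. I would first try convexity, or a direct derivative sign check: the disphenoid $a=1/3$ is the most symmetric member, with all faces congruent. This suggests the minimum there by a Steiner-symmetrization-type argument inside the family. The rigorous version, however, will likely be a one-variable inequality verified by clearing radicals.

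\textbf{Expected main obstacle.} The hardest step is Step 1, the completeness of the list. Face-to-face tilings need not be proper or periodic, and Goldberg's examples show uncountably many non-isometric tilings. The classification must therefore be phrased purely in terms of local edge data, with no global structure assumed. The analysis is sharpest at edges where the multiplicity is small, since only $\pi/2$ and $\pi/3$ angles, or angles summing with them to $2\pi$, are possible there. The tightest sub-case is ruling out non-tiling tetrahedra whose dihedral angles happen to admit a solution of the angle-sum equations but whose edge lengths cannot be matched consistently. There I would invoke the length-matching argument of \cite{BDKS20} rather than reprove it.
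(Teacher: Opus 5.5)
\textbf{The gap is in Step~1, and it breaks the strategy itself.} Your reduction needs every face-to-face tetrahedral tile to be one of Sommerville's isolated tilers or a member of one of Goldberg's three families. That completeness statement is an open problem. It is precisely Conjecture~11.2 of \cite{BDKS20}, which the paper lists among its open problems at the end of \S\ref{sec:addmA}.

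Edmonds \cite{Edmonds07} completes Sommerville's enumeration, and that enumeration concerns only \emph{proper} (orientation-preserving) face-to-face tilings. This is what reduces the orientation-preserving version \cite{Gallagher14} to a finite comparison. It gives you nothing once mirror images are allowed.

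Your claim that the improper case is ``absorbed by the same angle analysis'' fails, because the relations $\sum n_{ij}\delta_{ij}=2\pi$ are only necessary conditions. On their own they do not even reduce matters to finitely many patterns. A tetrahedron with small dihedral angles admits arbitrarily large $n_{ij}$, and, as Goldberg's families already show, solution sets can be positive-dimensional. Deciding which solutions are realized by an actual tiling is a global problem that no one has settled. There is also no completeness argument in \cite{BDKS20} for you to borrow, since completeness is exactly what they conjecture.

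\textbf{The missing idea is to avoid classifying tiles altogether.} Because you are minimizing, it suffices to show the following: any tetrahedron that satisfies the \emph{necessary} local edge conditions of a face-to-face tiling, and has $S\le S(T)$ after scaling to $\Vol(T)$, is similar to $T$. Since $T$ does tile face-to-face (\S\ref{subsec:tilings}), it is then the unique minimizer.

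The area hypothesis must be used first. By \cite[Cor.~8.3]{BDKS20}, quoted in the paper, such a tetrahedron has all dihedral angles in $[\theta_0,\pi-\theta_0]$, with $\theta_0=\arcsin\bigl(27/(32\sqrt2)\bigr)\approx36.63^\circ$. Consequently:
\begin{enumerate}
\item at most nine tiles surround any edge;
\item only finitely many integer relation systems can occur;
\item what remains is a finite case analysis of those systems against the area bound.
\end{enumerate}
For instance, if every dihedral angle has the form $2\pi/n$, you land in the eleven-tetrahedron list of Proposition~\ref{prop:twopi}, where $T$ has least area. This is, in outline, the route of the argument the paper cites.

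Your Steps~2--3 are fine as computations. For example, Sommerville No.~2 has $S=1+2\sqrt2$ and $\Vol=\tfrac13$, so $\mathcal{I}=9(1+2\sqrt2)^3\approx505>288\sqrt2\approx407$. But they only compare \emph{known} tiles and cannot replace this step.
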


In our normalization the extremal value is exactly
\begin{equation}\label{eq:isoper}
	\frac{S(T)}{\Vol(T)^{2/3}}
	=\frac{4\sqrt2}{(2/3)^{2/3}}
	=2^{11/6}3^{2/3}=7.4125760463\ldots
\end{equation}
against $2\cdot3^{7/6}= 7.2056217311\ldots$ for the regular tetrahedron.
The excess is the clean constant
\begin{equation}\label{eq:excess}
	\frac{2^{11/6}3^{2/3}}{2\cdot3^{7/6}}=\frac{2^{5/6}}{\sqrt3}
	=1.0287212295\ldots
\end{equation}
so $T$ is within $2.8721\%$ of the regular tetrahedron isoperimetrically:
in the authors' phrase, $T$ is ``the closest one to the regular
tetrahedron among all face-to-face space-tiling tetrahedra''
\cite[\S1]{BDKS20}. Two further items from \cite{BDKS20} are relevant.

\begin{proposition}[{\cite[Cor.~8.3]{BDKS20}}]
	A unit-volume tetrahedron whose surface area does not exceed that of $T$
	has all dihedral angles in $[\theta_{0},\pi-\theta_{0}]$, where
	$\theta_{0}=\arcsin\!\big(\tfrac{27}{32\sqrt2}\big)=36.63^{\circ}$. The
	angles of $T$, namely $60^{\circ}$ and $90^{\circ}$, lie comfortably
	inside.
\end{proposition}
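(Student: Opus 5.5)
The plan is to turn the dihedral angle into a ratio of volume to areas and altitudes, and then bound those areas and altitudes using only the hypothesis $S\le S(T)$ at unit volume. Fix an edge $e$ of length $\ell$, shared by faces $\varphi_i,\varphi_j$ with dihedral angle $\delta$. Let $t_i=2A_i/\ell$ be the altitude of $\varphi_i$ onto $e$. The distance from the vertex of $\varphi_i$ opposite $e$ to the plane of $\varphi_j$ is $t_i\sin\delta$, so
\[
3V = A_j\,t_i\sin\delta = \frac{2A_iA_j\sin\delta}{\ell}.
\]
This is the classical formula $V=\tfrac{2}{3\ell}A_iA_j\sin\delta$. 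With $V=1$ it gives
\[
\sin\delta=\frac{3}{A_j t_i}=\frac{3\ell}{2A_iA_j},
\]
so it suffices to show that $A_j t_i\le \tfrac{32\sqrt2}{9}$ for every unit-volume tetrahedron with $S\le S_0:=2^{11/6}3^{2/3}$ (the value \eqref{eq:isoper}). Since $\sin\delta=\sin(\pi-\delta)$, the two-sided conclusion $\delta\in[\theta_0,\pi-\theta_0]$ follows at once from $\sin\delta\ge\sin\theta_0$.

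The second step bounds the two factors against $S$. For the area, $A_j\le S-A_i-A_k-A_l$ trivially. A sharper tool is the projection inequality: the three faces other than $\varphi_j$ project orthogonally onto the plane of $\varphi_j$ and cover it, so $A_j\le A_i+A_k+A_l$, hence $A_j\le S/2$. For the altitude, I would bound $t_i$ through $A_i$ and $\ell$, and bound $\ell$ from below using the remaining faces. Concretely, I would use the symmetric form $\sin\delta=3\ell/(2A_iA_j)$ together with a lower bound on $\ell$ in terms of $A_i$, $A_j$ and $V$. For this I would apply the same formula along the edge opposite $e$, and the elementary inequality $V\le \tfrac16\,\ell\,\ell'\,d$, where $\ell'$ is the length of the opposite edge and $d$ the distance between $e$ and that edge. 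The aim is to reduce everything to a one-variable inequality in $x=(A_i+A_j)/S$.

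The main obstacle is making this optimization sharp enough to land exactly on $27/(32\sqrt2)$, rather than on a weaker constant. I would first try AM--GM in the form $A_iA_j\le\big(\tfrac{A_i+A_j}{2}\big)^2$, combined with the Cauchy--Schwarz-type isoperimetric bound $V^2\le c\,A_iA_jA_kA_l/S^{2}$ (with the constant fixed by the regular tetrahedron), and maximize the resulting rational function of $x$ under $S=S_0$, $V=1$. The number $27/32$ is suggestive of a cubic extremum of the form $x(1-x)^2$ at $x=1/3$, which supports this route. If this route gives only a weaker bound, I would fall back on the approach that I believe \cite{BDKS20} uses: a compactness argument locating the extremal configuration as a degenerate isosceles tetrahedron, followed by a Lagrange-multiplier computation in the six edge lengths.

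As a sanity check, I would verify the formula on $T$ itself. At the $\pi/3$ edges ($\ell=\sqrt3$, $A_i=A_j=\sqrt2$, $V=\tfrac23$) it returns $\sin\delta=\tfrac{3\cdot\frac23\cdot\sqrt3}{2\cdot 2}=\tfrac{\sqrt3}{2}$, as it should. At the long edges ($\ell=2$) it gives $\sin\delta=1$. Both values exceed $\sin\theta_0\approx0.597$, confirming the final remark of the statement.
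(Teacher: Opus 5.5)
Your reduction is correct: at unit volume $\sin\delta=3/(A_jt_i)$, so the statement is equivalent to $A_jt_i\le32\sqrt2/9$ whenever $S\le S_0$. Your checks on $T$ are also right. But that inequality is never proved, and the tools you list cannot prove it.

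The decoupled bound $A_j\le S/2$ loses too much. To finish that way you would need $t_i\le 64\sqrt2/(9S_0)\approx1.36$ for every admissible tetrahedron. Yet $T$ itself, scaled to unit volume so that $S=S_0$, has $t_i=\tfrac{2\sqrt2}{\sqrt3}\big(\tfrac32\big)^{1/3}\approx1.87$ on a $\pi/3$ edge, i.e.\ $(S_0/2)\,t_i=4\sqrt3>32\sqrt2/9$.

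A lower bound on $\ell$ in terms of $A_i,A_j,V$ alone is, via $\ell=2A_iA_j\sin\delta/(3V)$, just the statement again. It is also false: fix $A_i,A_j,V$ and let $\ell\to0$, $\delta\to0$. The faces $\varphi_k,\varphi_l$ have to enter, and you never say how.

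The bound $V^2\le c\,A_iA_jA_kA_l/S^2$ is not scale-invariant (degree $6$ versus degree $4$ in lengths), so it fails for every $c$.

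The compactness/Lagrange fallback is a hope rather than an argument. Moreover $\theta_0$ is not the sharp angle, since the estimate below is never attained. So an exact extremal computation would not ``land exactly'' on $27/(32\sqrt2)$.

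The missing idea is a lower bound on $S$ in which $\sin\delta$ appears. Project orthogonally onto a plane perpendicular to $e$. Both endpoints of $e$ go to one point $O$, and the two remaining vertices go to points at distances $t_i,t_j$ from $O$, making angle $\delta$ at $O$. Each of $\varphi_k,\varphi_l$ (the faces not containing $e$) projects onto this triangle, so $A_k,A_l\ge\tfrac12t_it_j\sin\delta$. Put $u=\sqrt{t_it_j}$. Then $A_i+A_j=\tfrac\ell2(t_i+t_j)\ge\ell u$ and $V=\tfrac16\ell u^2\sin\delta$, and a three-term AM--GM gives
\[
S\ \ge\ \frac{3V}{u\sin\delta}+\frac{3V}{u\sin\delta}+u^{2}\sin\delta\ \ge\ 3\Big(\frac{9V^{2}}{\sin\delta}\Big)^{1/3},
\qquad\text{i.e.}\qquad
\sin\delta\ \ge\ \frac{243\,V^{2}}{S^{3}} .
\]
With $V=1$ and $S_0^3=288\sqrt2$ this is exactly $\sin\delta\ge27/(32\sqrt2)$. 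Equivalently, $A_jt_i=3V/\sin\delta\le S^3/81$, which is your target. Since $\sin\delta=\sin(\pi-\delta)$, this gives both ends of the interval.

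This is the three-term extremum your numerology was reaching for, with split $A_i+A_j=2S/3$, $A_k+A_l=S/3$. Equality would require $\varphi_k$ and $\varphi_l$ both to be perpendicular to $e$, which is impossible; hence the constant is not sharp. The paper itself gives no proof, only the citation to \cite{BDKS20}.
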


\begin{proposition}[{\cite[Table~1]{BDKS20}}]\label{prop:twopi}
	All six dihedral angles of $T$ have the form $2\pi/n$, with
	$(n_{12},n_{13},n_{14},n_{23},n_{24},n_{34})=(4,6,6,6,6,4)$. There are
	exactly eleven tetrahedra, up to permutation of vertices, all of whose
	dihedral angles have this form; five of them tile; and $T$ has the least
	normalized area among all eleven. $T$ is moreover the only isohedral
	member of the list.
\end{proposition}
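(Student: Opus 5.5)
The first clause is read off from \eqref{eq:gram}. The dihedral angles of $T$ are $\pi/2=2\pi/4$ along $v_1v_2$ and $v_3v_4$, and $\pi/3=2\pi/6$ along the four cross edges. This gives $(n_{12},n_{13},n_{14},n_{23},n_{24},n_{34})=(4,6,6,6,6,4)$. The remaining clauses I would treat in three layers, proving the isohedral clause independently and taking the enumeration from \cite[Table~1]{BDKS20} while indicating how it is proved.

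\emph{Enumeration.} A tetrahedron is determined up to similarity by its dihedral angles. These are realizable exactly when the Gram matrix $G$, with $G_{ii}=1$ and $G_{ij}=-\cos(2\pi/n_{ij})$, is positive semidefinite of rank $3$ and its kernel contains a vector with all entries positive (the face areas $c_i$, as in \S\ref{subsec:smyth}). Since every dihedral angle is $<\pi$ we have $n_{ij}\ge3$. The decisive equation is $\det G=0$, a polynomial identity in the numbers $\cos(2\pi/n_{ij})$. By the Conway--Jones classification of vanishing rational sums of roots of unity, only finitely many tuples $(n_{ij})$ can satisfy it, and those come from a short explicit list of ``exotic'' relations plus the generic cases in which most angles are $\pi/2$ or $\pi/3$. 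A finite search over these tuples, followed by the positivity and semidefiniteness filter, should yield the eleven tetrahedra of \cite{BDKS20} up to vertex permutation. \textbf{I expect this Diophantine step to be the main obstacle}, and in the paper I would cite it rather than redo it.

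\emph{Tiling and area.} Five of the eleven tile space, and each can be exhibited as a known tiler: $T$ itself (\S\ref{subsec:tilings}), together with the Sommerville/Goldberg/Baumgartner tetrahedra of Table~\ref{tab:names}. Non-tiling of the other six is part of \cite{BDKS20}. For the least normalized area, I would compute $S/\Vol^{2/3}$ for each member directly from its Gram data. Normalize the face areas $c_i$ as the positive kernel vector of $G$, recover the normals, and hence the vertices and the volume. Then compare the eleven values with \eqref{eq:isoper}. This is finite and routine.

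\emph{Isohedral clause (self-contained).} An isohedral tetrahedron is a disphenoid, so opposite dihedral angles are equal, say $a,b,c$, and all four faces have equal area. Then $\sum\nu_i=0$, and expanding $|\sum\nu_i|^2=4-4(\cos a+\cos b+\cos c)$ gives
\[
\cos a+\cos b+\cos c=1 .
\]
With $a=2\pi/p$, $b=2\pi/q$, $c=2\pi/r$ and $p,q,r\ge3$, each cosine lies in $[-\tfrac12,1)$. If some angle is $2\pi/3$, the other two cosines must sum to $\tfrac32$, so both lie in $(\tfrac12,1)$. Otherwise $p,q,r\ge4$ and all three cosines are $\ge0$. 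In either case Conway--Jones (or the elementary fact that a rational value of $\cos(2\pi/n)$ lies in $\{0,\pm\tfrac12,\pm1\}$, combined with a degree argument on the cyclotomic field) leaves $\{0,\tfrac12,\tfrac12\}$ as the only admissible solution, up to order. For instance, the regular value $\cos a=\cos b=\cos c=\tfrac13$ is not of the form $\cos(2\pi/n)$. The solution $\{0,\tfrac12,\tfrac12\}$ is $(p,q,r)=(4,6,6)$, which is $T$, so $T$ is the unique isohedral member.
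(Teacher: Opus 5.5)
Your reading of the first clause off \eqref{eq:gram} is what the paper intends. For the enumeration, the count of five tilers and the least-area comparison, you end up where the paper does. The paper gives no argument and attributes the whole proposition to \cite[Table~1]{BDKS20}, so citing rather than redoing your $\det G=0$ search is the right call.

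Where you genuinely depart is the isohedral clause, which you prove without the list: equal face areas give $\sum\nu_i=0$, hence $\cos a+\cos b+\cos c=1$ for the three opposite-edge angles. This buys something the paper's citation cannot, namely uniqueness of $T$ among isohedral members without trusting the completeness of an external table.

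Its arithmetic core is only gestured at, though. Niven's theorem alone does not exclude irrational cosines summing to $1$, and ``a degree argument'' is not yet an argument. Conway--Jones's classification of rational relations among at most four cosines does settle it, but a short self-contained version exists:
\begin{itemize}
\item Set $N=\mathrm{lcm}(p,q,r)$, apply every automorphism $\zeta_N\mapsto\zeta_N^{k}$ of $\mathbb{Q}(\zeta_N)$ to the relation, and average over $k\in(\mathbb{Z}/N\mathbb{Z})^{\times}$.
\item The average of $\cos(2\pi k/n)$ is $\mu(n)/\phi(n)$, so $\mu(p)/\phi(p)+\mu(q)/\phi(q)+\mu(r)/\phi(r)=1$.
\item For $n\ge3$ one has $\mu(n)/\phi(n)\le\tfrac12$, with equality only at $n=6$, and otherwise $\le\tfrac14$, with equality at $n=10$.
\item Up to order this leaves $(6,6,r)$ with $\mu(r)=0$, or $(6,10,10)$.
\item The second fails since $\tfrac12+2\cos\tfrac{\pi}{5}\neq1$; the first forces $\cos(2\pi/r)=0$, i.e.\ $r=4$.
\end{itemize}
This also makes your case split on the angle $2\pi/3$ unnecessary, since $n=3$ contributes $-\tfrac12$ to the averaged sum.

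One small correction: Table~\ref{tab:names} lists only aliases of $T$ itself, so it cannot supply the other four tilers. Two of them are visibly the $\Bt$ and $\Ct$ alcoves of Table~\ref{tab:coxeter}, whose angles $\pi/2,\pi/3,\pi/4$ are all of the form $2\pi/n$. The remaining two, and the non-tiling of the other six, must come from \cite{BDKS20}.
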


Proposition~\ref{prop:twopi} is the arithmetic shadow of
\S\ref{sec:alcove}: the stronger property
$\delta\in\{\pi/2,\pi/3\}\subset\{\pi/n\}$ is what makes $T$ a
kaleidoscopic cell, and hence makes the present paper possible at all.

\subsection{Dehn invariant and scissors congruence}

Since every dihedral angle of $T$ is a rational multiple of $\pi$, the
Dehn invariant
\[
\delta(T)=\sum_{e}\ell(e)\otimes\theta(e)
\ \in\ \RR\otimes_{\mathbb{Q}}\big(\RR/\pi\mathbb{Q}\big)
\]
vanishes identically. By Sydler's theorem \cite{Sydler} $T$ is therefore
scissors-congruent to a cube of the same volume --- as Hill
\cite{Hill96} had shown by explicit dissection, $T$ being his first type.
The weaker necessary condition noted by Dehn \cite{Dehn} and checked by
Goldberg for all five classical tilers is visible directly: the sum of
the dihedral angles,
\[
2\cdot\frac\pi2+4\cdot\frac\pi3=\frac{7\pi}{3}
=\frac{14}{3}\ \text{right angles},
\]
is a rational multiple of a right angle.

\subsection{The three Coxeter tetrahedra, and a second derivation of the
	patch counts}
\label{subsec:coxeter}

Up to similarity there are exactly three tetrahedra in $\mathbb{E}^{3}$
whose reflections generate a discrete group, i.e.,\ three irreducible
affine Coxeter simplices of rank $4$. Coxeter identified them
\cite[p.~84]{Coxeter}; Senechal observes
\cite[p.~242]{Senechal81} that they are her first and second tetrahedra
together with the halving of the first in her Figure~11(b). Explicitly,
for a cube of edge $L$:

\begin{table}[h]
	\centering
	\caption{The three Coxeter tetrahedra of $\mathbb{E}^{3}$. Volumes are
		for a cube of edge $L$; ``$\times$'' is the number of copies filling the
		$\At$ alcove.}
	\label{tab:coxeter}
	\begin{tabular}{llllll}
		\toprule
		Group & Diagram & Dihedral angles & Classical name & Volume & $\times$\\
		\midrule
		$\At$ & $4$-cycle, all $3$
		& $\tfrac\pi3{}^{4},\tfrac\pi2{}^{2}$
		& Sommerville No.~1 $=T$ & $L^{3}/12$ & $1$\\
		$\Bt$ & star $4,3,3$
		& $\tfrac\pi4,\tfrac\pi3{}^{2},\tfrac\pi2{}^{3}$
		& Sommerville No.~2 & $L^{3}/24$ & $2$\\
		$\Ct$ & chain $4$--$3$--$4$
		& $\tfrac\pi4{}^{2},\tfrac\pi3,\tfrac\pi2{}^{3}$
		& quadrirectangular tetr. & $L^{3}/48$ & $4$\\
		\bottomrule
	\end{tabular}
\end{table}

\noindent
(The $\Bt$ entry is verified by taking faces $z=L/2$, $y+z=0$, $x=y$,
$x+y=0$; the $\Ct$ entry by taking the cube center, a face center, an
edge midpoint and a vertex, whose four faces are $x=L/2$, $x=y$, $y=z$,
$z=0$.) Thus
\begin{equation}\label{eq:chain}
	W(\At)\ \subset\ W(\Bt)\ \subset\ W(\Ct),
	\qquad \text{each of index } 2,
\end{equation}
and correspondingly $T$ is dissected into two $\Bt$ alcoves and four
$\Ct$ alcoves. These two dissections are exactly the first and (iterated)
first of Goldberg's list in \S\ref{subsec:tilings}(d): $T\to$ two
Sommerville No.~2, each $\to$ two quadrirectangular tetrahedra
\cite[\S2]{Goldberg74}, \cite[Fig.~11(b)]{Senechal81}.

All three Coxeter tetrahedra occur among the five classical space-filling
tetrahedra, and $T$ is the unique \emph{isohedral} one. This last point is
not decorative: it is precisely what made the contour identification of
Lemma~\ref{lem:contour} rigid, since equal face areas force Smyth's three
quadrilaterals to be \emph{equilateral}, so that the six mutual vertex
distances are determined by the angle pattern alone.

We can now recover the patch counts of
Proposition~\ref{prop:lattice} from \eqref{eq:chain} alone.

\begin{proposition}[Patch counts from the Coxeter index]
	\label{prop:patchindex}
	With the normalizations of \S\ref{sec:lattice} $(L_{\PP}=2$,
	$L_{\DD}=4)$, and with $a$ denoting the bounding-cell edge of
	\textup{\cite{GK}} for \PP{} and of \textup{\cite{GKMK}} for \DD{}
	$($so $a=1$ in both cases, since $L_{\PP}=2a$ and
	$L_{\DD}=4a)$:
	\begin{enumerate}
		\item[\textup{(i)}] the fundamental patch of \PP{} occupies one $\Ct$
		alcove of the cube of edge $2a$, of volume $a^{3}/6$; since
		$\Vol(T)=\tfrac23=4\cdot\tfrac16$, the disk $\Sigma^{*}_{A}$ consists of
		$4$ fundamental \PP{} patches;
		\item[\textup{(ii)}] the fundamental patch of \DD{} occupies one $\Bt$
		alcove of the cube of edge $2a$, of volume $a^{3}/3$; since
		$\Vol(T)=\tfrac23=2\cdot\tfrac13$, each of
		$\Sigma^{*}_{B},\Sigma^{*}_{C}$ consists of $2$ fundamental \DD{}
		patches.
	\end{enumerate}
\end{proposition}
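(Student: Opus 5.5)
The argument is a volume count inside the Coxeter chain \eqref{eq:chain}, combined with two facts about Gandy--Klinowski's fundamental patches: what cell each one fills, and that the extended surface is tiled by them. Since $\Sigma^{*}_{A}$ and $\Sigma^{*}_{B}$ are free-boundary patches in the kaleidoscopic cell $T$, Proposition~\ref{prop:lattice} shows that reflecting them in the faces of $T$ produces \PP{} with cube edge $L_{\PP}=2$ and \DD{} with conventional cube edge $L_{\DD}=4$. In the normalization $a=1$ of \cite{GK,GKMK} these are $L_{\PP}=2a$ and $L_{\DD}=4a$.

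The first step is to identify the cell of each fundamental patch.
\begin{itemize}
\item For \PP{}, the fundamental patch of \cite[\S5]{GK} has its free boundary on four mirror planes of $Pm\bar3m$. These are the faces $x=L/2$, $x=y$, $y=z$, $z=0$ of the quadrirectangular tetrahedron of Table~\ref{tab:coxeter}, taken in the cube of edge $L=2a$. That is a $\Ct$ alcove of volume $L^{3}/48=8a^{3}/48=a^{3}/6$.
\item For \DD{}, the patch of \cite[\S4]{GKMK} is bounded by mirror planes forming the $\Bt$ tetrahedron (faces $z=L/2$, $y+z=0$, $x=y$, $x+y=0$ with $L=2a$). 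Its volume is $L^{3}/24=a^{3}/3$.
\end{itemize}
I would confirm both identifications by checking that the dihedral angles of the bounding planes match Table~\ref{tab:coxeter}: $\pi/4$ twice, $\pi/3$ once and $\pi/2$ three times for $\Ct$; $\pi/4$ once, $\pi/3$ twice and $\pi/2$ three times for $\Bt$.

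The second step is the count. By \eqref{eq:chain}, $W(\At)\subset W(\Bt)\subset W(\Ct)$ with each index $2$, and all three groups preserve the relevant surface; for \PP{} this uses the index-$4$ inclusion recorded in \S\ref{sec:alcove}. Hence $T$ is a union of $[W(\Ct):W(\At)]=4$ alcoves of $\Ct$, respectively $[W(\Bt):W(\At)]=2$ alcoves of $\Bt$. Each of these alcoves contains exactly one image of the fundamental patch under the larger group, so $\Sigma^{*}_{A}$ is a union of $4$ patches and $\Sigma^{*}_{B}$ of $2$. The volume ratios $\tfrac23\big/\tfrac16=4$ and $\tfrac23\big/\tfrac13=2$ give the same numbers independently.

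As a consistency check I would compare with the areas in Proposition~\ref{prop:lattice}. A \PP{} patch has area $K/(4K')$ and a \DD{} patch has area $K'/(4K)$, so $4\cdot K/(4K')=K/K'$ and $2\cdot K'/(4K)=K'/(2K)$, matching Corollary~\ref{cor:areas}.

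The main obstacle is the first step: matching the cited fundamental-patch cells exactly with the $\Ct$ and $\Bt$ alcoves in the correct scale and placement relative to $T$. In particular, the \DD{} alcove must be measured in a cube of edge $2a$ and not in the conventional cube of edge $4a$. I would handle this by placing each alcove inside $T$ explicitly, using the dissections of \S\ref{subsec:tilings}(d): the mirror plane $x=0$ splits $T$ into two Sommerville No.~2 ($\Bt$) tetrahedra, and halving each of those again gives four $\Ct$ tetrahedra. I would then verify that the vertices are edge midpoints, vertices and centers of the relevant cubes. This placement makes the scale unambiguous.
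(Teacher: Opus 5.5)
Your argument for (i) is valid and takes a different, more geometric route than the paper. The extra mirrors $x=0$, $y=0$ of $W(\Ct)\supset W(\At)$ lie in the stabilizer $D_{2d}$ of $\Sigma^{*}_{A}$, and \PP{} is invariant under $Pm\bar3m=W(\Ct)$ with cube edge $2$. So these mirrors cut $\Sigma^{*}_{A}$ into four free-boundary pieces in $\Ct$ alcoves, which is Proposition~\ref{prop:quarter}. The paper instead only counts: the area $12a^{2}K/K'$ of a cube of edge $2a$ holds $48$ patches of area $a^{2}K/(4K')$, matching the $48$ $\Ct$ alcoves of that cube.

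For (ii), however, there is a genuine gap. You claim that the \DD{} patch is bounded by mirror planes forming a $\Bt$ tetrahedron, i.e.\ that $W(\Bt)$ preserves \DD. This is false. A $\Bt$ alcove has a dihedral angle $\pi/4$, which cannot occur between two $\{110\}$ planes: their unit normals have dot products in $\{0,\pm\tfrac12,\pm1\}$. But the only mirrors of \DD{} are $\{110\}$ planes; perpendicular to $\langle100\rangle$ it has only glides. Those $\{110\}$ reflections generate just $W(\At)$.

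You can see the failure inside $T$ itself. The plane $x=0$ that splits $T$ into two $\Bt$ alcoves is the mirror $\sigma_{x=0}\in D_{2d}$. By Lemma~\ref{lem:contour}(c) and the remark after it, $\sigma_{x=0}$ interchanges $\Sigma^{*}_{B}$ and $\Sigma^{*}_{C}$. Hence $\Sigma^{*}_{B}$ does not meet $x=0$ orthogonally, and the halves $\Sigma^{*}_{B}\cap\{x\ge0\}$, $\Sigma^{*}_{B}\cap\{x\le0\}$ are not free-boundary patches. There is no larger reflection group acting on \DD{} whose alcoves each contain one patch. Your dihedral-angle check would verify only the four planes, not the surface. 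Your ``independent'' ratio $\tfrac23/\tfrac13$ also rests on this same false identification.

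For \DD{}, the phrase ``occupies one $\Bt$ alcove'' is only a volume statement, and it has to come from a count, as in the paper:
\begin{itemize}
\item The conventional cube of edge $4a$ carries area $3(4a)^{2}K'/K=48a^{2}K'/K$, hence $192$ patches of area $a^{2}K'/(4K)$.
\item The same cube contains $8\cdot48=384$ $\Ct$ alcoves of cubes of edge $2a$.
\item So each patch accounts for volume $2(2a)^{3}/48=a^{3}/3$, and $\Vol(T)/(a^{3}/3)=2$.
\end{itemize}
Equivalently, what you call a consistency check, $\Area(\Sigma^{*}_{B})\cdot 4K/K'=2$, is the actual argument. If you want a geometric picture, use the rotation $C_{2}(z)\in\mathrm{Stab}(\Sigma^{*}_{B})$. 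It carries $\Sigma^{*}_{B}\cap\{x\ge0\}$ onto $\Sigma^{*}_{B}\cap\{x\le0\}$, so each $\Bt$ half of $T$ contains exactly one patch's worth of area. The two halves are related by a rotation, not by a reflection in $W(\Bt)$.
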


\begin{proof}
	For \PP: the conventional cell of edge $L_{\PP}=2a$ carries total area
	$3L_{\PP}^{2}K/K'=12a^{2}K/K'$ and the fundamental patch has area
	$a^{2}K/(4K')$ \cite[\S4]{GK}, so there are $48$ patches per cell; the
	$\Ct$ alcove count for that cell is also $48$, whence one patch per
	alcove and volume $(2a)^{3}/48=a^{3}/6$. For \DD: the conventional
	fcc\ cell of edge $L_{\DD}=4a$ carries $3L_{\DD}^{2}K'/K
	=48a^{2}K'/K$ and the fundamental patch has area $a^{2}K'/(4K)$
	\cite[\S3]{GKMK}, so there are $192$ patches per cell; the cell contains
	$8\cdot 48=384$ alcoves of type $\Ct$ for the cube of edge $2a$, hence
	two per patch, i.e.,\ one $\Bt$ alcove per patch, of volume
	$(2a)^{3}/24=a^{3}/3$. The counts $4$ and $2$ follow by dividing
	$\Vol(T)=2/3$. These agree with the areas:
	$4\cdot\tfrac14 K/K'=K/K'$ and $2\cdot\tfrac14K'/K=K'/(2K)$, which are
	\eqref{eq:areas}.
\end{proof}

Proposition~\ref{prop:patchindex} uses only volumes and the classical
patch areas; it is logically independent of the area-fraction and
curvature-fraction bookkeeping of \S\ref{sec:lattice}, and of the
contour identification of \S\ref{sec:addmA}. It is reassuring that
three independent routes give the same pair $(4,2)$.

\begin{remark}[A terminological correction to \cite{GKMK,GK}]
	\label{rem:gandyslip}
	Both papers state, in their ``Properties'' sections, that ``the Coxeter
	cell of the patch is the tetragonal disphenoid with inradius
	$r=a(\sqrt2-1)/2$,'' and that ``the volume of the quadrirectangular
	tetrahedron enclosing the patch is $a^{3}/3$.'' Three remarks.
	\begin{enumerate}
		\item[\textup{(i)}] The inradius $a(\sqrt2-1)/2$ is \emph{not} that of a
		tetragonal disphenoid in any natural normalization; it is exactly the
		inradius of the $\Ct$ alcove (quadrirectangular tetrahedron) of the cube
		of edge $2a$. Indeed for the cube $[0,1]^{3}$ that alcove has
		$V=\tfrac1{48}$, $S=\tfrac{1+\sqrt2}{4}$, hence
		$r=3V/S=(\sqrt2-1)/4$, which doubles to $a(\sqrt2-1)/2$ on the cube of
		edge $2a$. So ``tetragonal disphenoid'' in those two sentences should
		read ``quadrirectangular tetrahedron,'' in agreement with the earlier
		sections of \cite{GK} and with \cite[Fig.~1a]{GKMK}.
		\item[\textup{(ii)}] The stated volume $a^{3}/3$ should be $a^{3}/6$:
		the quoted numerical value
		$\mathcal{A}/\mathcal{V}^{2/3}=0.6452799\ldots$ for the fundamental
		\PP{} patch requires $\mathcal{V}=a^{3}/6$, since
		$\big(a^{2}K/4K'\big)/\big(a^{3}/6\big)^{2/3}=0.645280$ while with
		$a^{3}/3$ one would get $0.406510$. With $a^{3}/6$ everything is
		consistent, including $\mathcal{A}/\mathcal{V}^{2/3}=3K/K'=2.345103$ for
		the cubic cell.
		\item[\textup{(iii)}] Nevertheless the \emph{sentence that prompted this
			investigation} --- ``[the \DD{} surface's] fundamental patch is
		inscribed in a tetragonal disphenoid, an isohedral tetrahedron with
		isosceles triangle faces'' \cite[p.~545]{GKMK} --- points at the right
		tetrahedron for the wrong reason. The disphenoid is indeed the natural
		cell here, but for a $2$-patch piece of \DD{} and a $4$-patch piece of
		\PP{}, not for a single fundamental patch. Brakke's disphenoid families
		\cite{BrakkeDisph} are organized around the correct, larger cell.
	\end{enumerate}
	We record this because the slip is reproduced in both 1999 and 2000
	papers and is easy to propagate.
\end{remark}

\subsection{Why only the reflection tiling matters here}
\label{subsec:whyreflection}

$T$ admits uncountably many tilings (\S\ref{subsec:tilings}), but exactly
one of them is generated by reflections in the faces of $T$. This is the
only one that produces a minimal surface, for a reason worth stating
explicitly.

\begin{proposition}\label{prop:onlyreflection}
	Let $\Sigma^{*}\subset T$ be a minimal disk meeting $\partial T$
	orthogonally. The union of the images of $\Sigma^{*}$ under a tiling
	group $G$ of $T$ is a smooth minimal surface across the face planes if
	and only if $G$ contains the reflections in those planes. In particular:
	\begin{enumerate}
		\item[\textup{(i)}] the reflection tiling of
		\S\ref{subsec:tilings}\textup{(b)} extends $\Sigma^{*}_{A}$ to \PP{} and
		$\Sigma^{*}_{B},\Sigma^{*}_{C}$ to \DD{};
		\item[\textup{(ii)}] the prism tiling of
		\S\ref{subsec:tilings}\textup{(a)}, being orientation-preserving, does
		\emph{not}: the adjacent copies are obtained by $\pi$-rotation about the
		symmetry axis of the shared face, which does not match $\Sigma^{*}$ to
		its neighbor along the free boundary, and the union has a crease.
	\end{enumerate}
\end{proposition}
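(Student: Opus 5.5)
The plan is to prove the general equivalence first and then read off (i) and (ii) from it.

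\emph{Sufficiency.} Suppose $G$ contains the reflection $\rho_F$ in the plane $F$ of a face. Since $\Sigma^{*}$ meets $F$ orthogonally along a free-boundary arc $\gamma$, the Schwarz reflection principle for minimal surfaces applies: a minimal surface meeting a plane orthogonally along a curve extends analytically across that curve by reflection in the plane, and the extension is $\rho_F(\Sigma^{*})$. So $\Sigma^{*}\cup\rho_F(\Sigma^{*})$ is a smooth (indeed real-analytic) minimal surface in a neighbourhood of the interior of $\gamma$. Smoothness at the corners, which sit on edges of $T$, needs a separate remark. The dihedral angles are $\pi/n$ with $n\in\{2,3\}$, so the $2n$ images around an edge close up. This follows either because the corner angle of $\Sigma^{*}$ is $\pi/n$, or, more simply, because the conjugate Plateau disk $\Sigma$ extends smoothly across its vertices by Schwarz's $180^{\circ}$ rotation principle, and conjugation is a local isometry that commutes with analytic continuation.

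\emph{Necessity.} Suppose $g\in G$ is the element carrying $T$ to its neighbour $T'=g(T)$ across $F$, and suppose the union is smooth and minimal along $\gamma$. By the Cauchy--Kovalevskaya / unique continuation property for minimal surfaces, the continuation of $\Sigma^{*}$ across $\gamma$ is unique, and we already know it is $\rho_F(\Sigma^{*})$. Hence $g(\Sigma^{*})$ and $\rho_F(\Sigma^{*})$ coincide near $\gamma$, so $\rho_F^{-1}g$ is a symmetry of $T$ preserving $\Sigma^{*}$ and fixing $\gamma$ pointwise.

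The main obstacle is the final step of this argument: showing that such a map must be the identity on the arc-neighbourhood, so that $g=\rho_F$. I would handle it as follows. The map $\rho_F^{-1}g$ is an isometry fixing the planar, non-straight geodesic $\gamma$ pointwise, so it fixes the plane $F$ pointwise. It is therefore either the identity or $\rho_F$. The second option would send $T$ to $T'$, contradicting that it preserves $T$. Hence $g=\rho_F$.

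Finally, (i) follows from sufficiency, using \S\ref{sec:alcove} ($G=W(\At)$) and Proposition~\ref{prop:lattice}. Part (ii) follows from necessity. In the prism tiling the neighbour across a face is produced by the $\pi$-rotation about the face's symmetry axis, which is orientation-preserving and hence is not $\rho_F$. So $g(\Sigma^{*})\neq\rho_F(\Sigma^{*})$, and the two sheets meet along $\gamma$ at a nonzero angle, giving the crease.
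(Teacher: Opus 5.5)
Your sufficiency half matches the paper's (Schwarz reflection), and the corner remark is a harmless addition. The gap is in necessity, at the words ``fixing $\gamma$ pointwise.''

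From $g(\Sigma^{*})=\rho_F(\Sigma^{*})$ you get only that $k:=\rho_F^{-1}g$ is a symmetry of $T$ with $k(\Sigma^{*})=\Sigma^{*}$. Hence $k(\gamma)=\gamma$ as a set, but $k$ may reverse $\gamma$, and then $g=\rho_F$ does not follow. This cannot be patched, because it is exactly what happens in the prism case. Take $F=\varphi_{4}$, the plane $y+z=\tfrac12$. The half-turn about the symmetry axis of $F$ is $g=\rho_F\circ\sigma_{x=0}$, so $k=\sigma_{x=0}$. This map sends $\varphi_{4}$ to itself and swaps the two corners $(\pm\tfrac12,\tfrac12,0)$ of $\Sigma^{*}_{A}$ on it. Since $\mathrm{Stab}(\Sigma^{*}_{A})=D_{2d}$ (Lemma~\ref{lem:contour}(c)), $g(\Sigma^{*}_{A})=\rho_F(\Sigma^{*}_{A})$ is precisely the Schwarz continuation, although $g\neq\rho_F$. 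More generally, every orientation-preserving $g$ with $g(T)=\rho_F(T)$ makes $k$ an improper element of $D_{2d}$. So for $\Sigma^{*}_{A}$ an orientation-preserving tiling does extend the disk smoothly across the faces. The ``only if'' and part~(ii) are therefore false for $\Sigma^{*}_{A}$, and no argument can give $g=\rho_F$ for a general $\Sigma^{*}$.

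What is true is the setwise criterion, which is the one the paper's proof uses: $g(\Sigma^{*})$ continues $\Sigma^{*}$ across $\Pi$ if and only if $g^{-1}\sigma_{\Pi}$ stabilizes $\Sigma^{*}$. For orientation-preserving $g$, the element $g^{-1}\sigma_{\Pi}$ is an improper element of $D_{2d}$. So the extension fails for every such $g$ precisely when $\mathrm{Stab}(\Sigma^{*})$ contains no improper element. That holds for $\Sigma^{*}_{B},\Sigma^{*}_{C}$, whose stabilizer is the rotation group $D_{2}$ (Remark after Lemma~\ref{lem:contour}), but not for $\Sigma^{*}_{A}$. The paper's closing appeal to the cyclic order is likewise valid only for $\Sigma^{*}_{B},\Sigma^{*}_{C}$, since here $g^{-1}\sigma_{\Pi}=\sigma_{x=0}$ preserves the order $(1,3,2,4)$.

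Finally, even for $\Sigma^{*}_{B}$ the failure is not a crease along $\gamma$. We have $g(\Sigma^{*}_{B})=\rho_F(\Sigma^{*}_{C})$, which meets $\varphi_{4}$ in the arc of $\Sigma^{*}_{C}$ running from $(0,0,\tfrac12)$ to $(\tfrac12,\tfrac12,0)$. By contrast, $\gamma$ runs from $(0,0,\tfrac12)$ to $(-\tfrac12,\tfrac12,0)$. The free boundaries do not even coincide, so your ``two sheets meet along $\gamma$ at a nonzero angle'' does not describe what happens.
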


\begin{proof}
	Orthogonal intersection with a plane $\Pi$ is exactly the condition that
	$\Sigma^{*}\cup\sigma_{\Pi}(\Sigma^{*})$ be real-analytic across
	$\Sigma^{*}\cap\Pi$ (Schwarz reflection for minimal surfaces). If the
	tile across $\Pi$ is $g(T)$ with $g$ not the reflection $\sigma_{\Pi}$,
	then $g(\Sigma^{*})$ and $\sigma_{\Pi}(\Sigma^{*})$ have the same trace
	on $\Pi$ only if $g^{-1}\sigma_{\Pi}$ stabilizes $\Sigma^{*}$, which
	fails for the $\pi$-rotations occurring in the prism tiling, since those
	rotations do not preserve the cyclic order of the four free-boundary arcs.
\end{proof}

Consequently: of the three Coxeter tetrahedra, $T$ is the one with four
congruent faces; of $T$'s many tilings, the $\At$ reflection tiling is
the one that manufactures triply periodic minimal surfaces; and the
classical cubic TPMS all live in $\At$, $\Bt$ or $\Ct$ cells. Thus the
appearance of \PP{} and \DD{} inside $T$ is not a coincidence but the
only possibility consistent with $T$ being an isohedral Coxeter
tetrahedron.

\subsection{What is \emph{not} claimed}

The following are coincidences until someone proves otherwise, and the
paper should not suggest more.
\begin{enumerate}
	\item[\textup{(i)}] That $T$ is simultaneously (a) the least-area
	tetrahedral tile and (b) the kaleidoscopic cell hosting \PP{} and \DD{}
	is, as far as we can tell, \emph{not} a theorem with a known common
	cause. Both are ``roundness'' optima of a sort --- (a) minimizes
	$S^{3}/V^{2}$ over tiles, and \PP{} and \DD{} are the two classical
	area-minimizing TPMS of genus $3$ --- but we know of no implication in
	either direction.
	\item[\textup{(ii)}] The product identity $2A_{A}A_{B}=1$ of
	\S\ref{sec:product} is explained by Bonnet adjointness, not by anything
	in this addendum.
	\item[\textup{(iii)}] Nothing here bears on global minimality of the
	three disks.
\end{enumerate}

\subsection{Three open problems that touch $T$}

\begin{enumerate}
	\item[\textup{(a)}] \emph{Completeness of the tile list.}
	Conjecture~11.2 of \cite{BDKS20}: the Sommerville and Goldberg lists of
	face-to-face tetrahedral tiles are complete, and in particular every
	tetrahedral tile has at least one dihedral angle equal to $\pi/2$. $T$
	has two. It remains open whether every tetrahedral space-filler arises by
	partitioning a triangular prism \cite[p.~242]{Senechal81}, or a
	parallelohedron.
	\item[\textup{(b)}] \emph{Acute tiles.} Kiernan \cite{Kiernan} proved
	that no tetrahedron with all six dihedral angles acute tiles properly
	face-to-face; Eppstein--Sullivan--\"Ung\"or conjectured no single acute
	tetrahedron tiles at all \cite{ESU}. $T$ is the extremal near-miss: four
	of its six dihedral angles are $\pi/3$ and the other two are exactly
	$\pi/2$.
	\item[\textup{(c)}] \emph{Interior of the family.} Conjecture~6.1 of
	\cite{FinchReg} concerns $\beta\in(\tfrac12,\tfrac23)$, where the
	disphenoid is no longer a Coxeter cell (the dihedral angle
	$\delta_{1}=\pi(1-\beta)$ is not a submultiple of $\pi$), so there is no
	reflection tiling, no triply periodic extension, and --- consistently ---
	no algebraic Gauss map. The present paper settles only the endpoint
	$\beta=\tfrac23$, and \S\ref{subsec:whyreflection} explains why that
	endpoint is the only member of the family where a classical
	identification could have been expected.
\end{enumerate}

\subsection{Verification code for Addendum A}

\begin{verbatim}
ClearAll["Global`*"];
vv = {{1,0,1/2},{-1,0,1/2},{0,1,-1/2},{0,-1,-1/2}};
volT = Abs[Det[{vv[[2]]-vv[[1]], vv[[3]]-vv[[1]], vv[[4]]-vv[[1]]}]]/6;     
cT = Sqrt[2];  rT = 3 volT/(4 cT);
Print["Vol, S, r = ", {volT, 4 cT, rT}];

(* A.1  circumscribed box, bimedians *)
rot = {{1,1,0},{-1,1,0},{0,0,Sqrt[2]}}/Sqrt[2];
Print["box coords = ", Simplify[rot . # & /@ vv]];
bimed[{i_,j_},{k_,l_}] := Norm[(vv[[i]]+vv[[j]])/2-(vv[[k]]+vv[[l]])/2];
Print["bimedians = ", Simplify[{bimed[{1,2},{3,4}],
bimed[{1,3},{2,4}], bimed[{1,4},{2,3}]}], "  (expect 1,Sqrt2,Sqrt2)"];
Print["perpendicular? ",
Simplify[{((vv[[1]]+vv[[3]])/2-(vv[[2]]+vv[[4]])/2) .
((vv[[1]]+vv[[2]])/2-(vv[[3]]+vv[[4]])/2),
((vv[[1]]+vv[[4]])/2-(vv[[2]]+vv[[3]])/2) .
((vv[[1]]+vv[[2]])/2-(vv[[3]]+vv[[4]])/2)}]];
Print["Varignon areas = ", {1/2*Sqrt[2]*Sqrt[2], 1/2*1*Sqrt[2],
1/2*1*Sqrt[2]}];

(* A.2  solid angles *)
Print["Omega_i/(4 Pi) = ", (Pi/2+2 Pi/3-Pi)/(4 Pi), "  (expect 1/24)"];

(* A.3  prism *)
A0 = vv[[1]]; B0 = vv[[3]]; C0 = vv[[2]]; D0 = vv[[4]];
Print["chain lengths = ",
Simplify[{Norm[B0-A0], Norm[C0-B0], Norm[D0-C0], Norm[A0-D0]}]];
Print["diagonals = ", Simplify[{Norm[C0-A0], Norm[D0-B0]}]];
t = D0 - A0;  Print["axis = ", t, "  |t| = ", Norm[t]];
perp = (B0-A0) - ((B0-A0).{1,1,1}/3) {1,1,1};
Print["cross-section edge = ", Simplify[Norm[perp]],
" = ", Simplify[2 Sqrt[6]/3]];
Print["prism volume = ",
Simplify[Sqrt[3]/4 Norm[perp]^2 * Norm[t]], "  (expect 2)"];

(* A.4  isoperimetry *)
Print["S/V^(2/3) = ", Simplify[4 Sqrt[2]/volT^(2/3)],
" = ", N[2^(11/6) 3^(2/3), 16]];
Print["regular    = ", N[2*3^(7/6),16]];
Print["ratio      = ", Simplify[2^(5/6)/Sqrt[3]], " = ",
N[2^(5/6)/Sqrt[3],16]];
Print["theta0 = ", N[ArcSin[27/(32 Sqrt[2])]/Degree, 10]];

(* A.5  Coxeter chain: alcove volumes for cube edge L=2 *)
Print["A3~, B3~, C3~ volumes (L=2) = ", {8/12, 8/24, 8/48},
"   ratios = ", {1, 2, 4}];
KK = EllipticK[1/4]; KP = EllipticK[3/4];
Print["4 P patches = ", N[4 (KK/(4 KP)), 16], " = K/K'"];
Print["2 D patches = ", N[2 (KP/(4 KK)), 16], " = K'/(2K)"];

(* A.6  Gandy inradius check: C3~ alcove of unit cube *)
P1={1/2,1/2,1/2}; P2={1/2,1/2,0}; P3={1/2,0,0}; P4={0,0,0};
vol48 = Abs[Det[{P1-P4,P2-P4,P3-P4}]]/6;
ar[a_,b_,c_] := Norm[Cross[b-a,c-a]]/2;
s48 = ar[P1,P2,P3]+ar[P1,P2,P4]+ar[P1,P3,P4]+ar[P2,P3,P4];
Print["C3~ alcove: V = ", vol48, "  S = ", Simplify[s48],
"  r = ", Simplify[3 vol48/s48], " = ", Simplify[(Sqrt[2]-1)/4]];
Print["doubled (cube edge 2a): r = ", Simplify[(Sqrt[2]-1)/2]];
Print["Gandy A/V^(2/3) with V=a^3/6 : ",
N[(KK/(4 KP))/(1/6)^(2/3), 10], "  (expect 0.6452799)"];
Print["with V=a^3/3 (as printed)   : ",
N[(KK/(4 KP))/(1/3)^(2/3), 10], "  (does not match)"];
\end{verbatim}

\section{Addendum B: a question about a quadrirectangular tetrahedron}
\label{sec:addmB}

Consider the quadrirectangular tetrahedron with vertices $(0,0,0), (1,0,0), (0,0,1), (0,1,1)$.  
Schwarz \cite{Schwarz,GK} determined that the minimal disk with corners on the edges of 
length $1,\sqrt2,\sqrt3,\sqrt2$ is a piece of the $\PP$ surface and that its surface area is 
exactly
\[
\frac{\KK}{4\,\KKp} \approx 0.195425240337014.
\]
Might one of the other minimal disks possibly be related to the $\DD$ surface? The answer is
evidently no.  The minimal disk with corners on the edges of length $1,\sqrt2,1,\sqrt2$ is a 
piece of Schoen's I-WP surface and has area $\approx 0.288674897531445$. The minimal disk with 
corners on the edges of length $1,1,1,\sqrt3$ is a piece of Neovius' surface and has area 
$\approx 0.292539844325784$. [See Errata at end]. Brakke explained his method for reaching such conclusions:   
\begin{quote}
 I made Evolver files of the surfaces in the tetrahedrons, evolved them to smooth surfaces, then used the multiple-copy view feature of Evolver to show complete unit cubes, and compared them to the images on my Triply Periodic Minimal Surfaces web page.
 \end{quote}
A more formal argument -- still not a rigorous proof -- appears in Remark \ref{rem:noD}.

\subsection{What the tetrahedron is}
\label{subsec:whatQ}

Write $\Qu$ for the tetrahedron of this addendum, with vertices
\begin{equation}\label{eq:Qverts}
A=(0,0,0),\quad B=(1,0,0),\quad C=(0,0,1),\quad D=(0,1,1)
\end{equation}
and let $\varphi_{A},\varphi_{B},\varphi_{C},\varphi_{D}$ be the faces
opposite $A,B,C,D$. The path $B\!-\!A\!-\!C\!-\!D$ has three mutually
perpendicular unit legs, so $\Qu$ is an \emph{orthoscheme}
(path-simplex). Its data include:
\begin{gather}
\text{edges } 1,1,1,\sqrt2,\sqrt2,\sqrt3;\qquad
\Vol(\Qu)=\tfrac16;\qquad
\text{face areas } \tfrac{\sqrt2}{2},\tfrac12,\tfrac{\sqrt2}{2},\tfrac12;
\label{eq:Qdata}\\
S(\Qu)=1+\sqrt2,\qquad
r_{\Qu}=\frac{3\Vol}{S}=\frac{\sqrt2-1}{2}
\label{eq:Qr}
\end{gather}
and, writing $\nu$ for the outward unit normals
\begin{equation}\label{eq:Qnrm}
\nu_{A}=\tfrac{1}{\sqrt2}(1,0,1),\quad
\nu_{B}=(-1,0,0),\quad
\nu_{C}=\tfrac{1}{\sqrt2}(0,1,-1),\quad
\nu_{D}=(0,-1,0)
\end{equation}
the six dihedral angles are
\begin{equation}\label{eq:Qdih}
\delta_{AB}=\delta_{CD}=\tfrac\pi4,\qquad
\delta_{BD}=\tfrac\pi3,\qquad
\delta_{AC}=\delta_{AD}=\delta_{BC}=\tfrac\pi2 .
\end{equation}
Comparison with Table \ref{tab:coxeter} identifies $\Qu$ at once: 
the Coxeter diagram on the faces is
the chain $\varphi_{D}\!-\!\varphi_{C}\!-\!\varphi_{A}\!-\!\varphi_{B}$
with labels $4,3,4$, so we have the following.

\begin{proposition}\label{prop:QisC3}
	$\Qu$ is the alcove of the affine Weyl group $W(\Ct)$, equivalently
	$1/48$ of a cube of edge $2$, equivalently Coxeter's quadrirectangular
	tetrahedron \cite{Coxeter}. In the notation of Gandy--Klinowski
	\cite{GK} it is the cell of the fundamental \PP{} patch with bounding-cell
	edge $a=1$, and accordingly $r_{\Qu}=a(\sqrt2-1)/2$ matches their stated
	inradius exactly, while $\Vol(\Qu)=a^{3}/6$ confirms the correction to
	their printed $a^{3}/3$ recorded in Remark~\ref{rem:gandyslip} above.
\end{proposition}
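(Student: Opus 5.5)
The plan is to compute the Coxeter diagram of $\Qu$ from its dihedral angles and match it to $\Ct$. Then I will exhibit an explicit congruence between $\Qu$ and the $\Ct$ alcove of Table~\ref{tab:coxeter}, which settles the ``$1/48$ of a cube of edge $2$'' and Gandy--Klinowski identifications together.

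First I verify \eqref{eq:Qnrm} and \eqref{eq:Qdih}. Each normal $\nu_X$ is checked to be orthogonal to the three edges of $\varphi_X$ and to point away from $X$. The dihedral angles then follow from $\cos\delta_{XY}=-\nu_X\cdot\nu_Y$:
\begin{itemize}
\item $\nu_A\cdot\nu_B=-1/\sqrt2$ and $\nu_C\cdot\nu_D=-1/\sqrt2$ give $\delta_{AB}=\delta_{CD}=\pi/4$;
\item $\nu_B\cdot\nu_D=0$, and this is where a sign must be watched; see below;
\item $\nu_A\cdot\nu_C=-1/2$ gives $\pi/3$;
\item the remaining products vanish.
\end{itemize}
The Gram computation gives $\delta_{AC}=\pi/3$ and $\delta_{BD}=\pi/2$. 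This is the assignment consistent with a chain $\varphi_D-\varphi_C-\varphi_A-\varphi_B$: the consecutive pairs $CD$, $AC$, $AB$ carry $\pi/4,\pi/3,\pi/4$, and all other pairs carry $\pi/2$. I will therefore recompute \eqref{eq:Qdih} rather than trust it; if the printed labels $\delta_{BD}$ and $\delta_{AC}$ are swapped, I will note that only the Coxeter graph matters. All angles are $\pi/m$, so reflections generate a discrete group with fundamental domain $\Qu$. Its diagram is a linear chain labelled $4,3,4$, which is the affine diagram $\Ct$. The stated data \eqref{eq:Qdata}--\eqref{eq:Qr} are routine: $\Vol=\tfrac16|\det(B-A,C-A,D-A)|=1/6$, the face areas come from cross products, and $r=3V/S$.

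For the cube description, I will map $\Qu$ to the orthoscheme of Table~\ref{tab:coxeter} with $L=2$: cube center, face center, edge midpoint, vertex. Centered at the origin these are $(0,0,0),(1,0,0),(1,1,0),(1,1,1)$. The path $B-A-C-D$ of $\Qu$ has orthogonal unit legs in the directions $-e_1,e_3,e_2$. The vertex path of the reference orthoscheme also has orthogonal unit legs, in the directions $e_1,e_2,e_3$. An isometry matching the ordered paths, namely a translation composed with a signed permutation, carries one onto the other. Hence $\Qu$ is that alcove, with volume $8/48=1/6$.

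The identification with \cite{GK} reuses Remark~\ref{rem:gandyslip}(i) and Proposition~\ref{prop:patchindex}(i). There the fundamental \PP{} patch occupies one $\Ct$ alcove of the cube of edge $2a$, whose inradius is $a(\sqrt2-1)/2$ and whose volume is $a^3/6$. Setting $a=1$ and comparing with \eqref{eq:Qr} gives the claims. The only real obstacle is bookkeeping: the labelling of opposite faces against the dihedral-angle labels, which I will settle by the explicit Gram matrix above.
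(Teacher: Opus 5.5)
Your proof is correct and follows essentially the paper's route: you compute the dihedral angles from the outward normals, read off the face Coxeter graph $\varphi_D-\varphi_C-\varphi_A-\varphi_B$ with labels $4,3,4$ (hence the $\Ct$ alcove), and take the volume, the inradius and the Gandy--Klinowski comparison from \eqref{eq:Qr} and Remark~\ref{rem:gandyslip}; your explicit signed-permutation isometry onto $\{1\ge x_1\ge x_2\ge x_3\ge 0\}\subset[-1,1]^3$ is a small addition that makes the edge-$2$ scale explicit, since the diagram alone fixes $\Qu$ only up to similarity and the paper pins the scale through the volume. One clarification: there is no label swap in \eqref{eq:Qdih}, which (like the verification code B.2) indexes each dihedral angle by the \emph{edge} it sits on, and the edge $BD$ is exactly $\varphi_A\cap\varphi_C$, so $\delta_{BD}=\pi/3$ there is the same fact as your $\nu_A\cdot\nu_C=-\tfrac12$.
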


\noindent
Since $\Qu$ is a $W(\Ct)$ alcove and the symmetry of the chain $4,3,4$ is
its reversal, $\operatorname{Sym}(\Qu)$ has order $2$, generated by the
half-turn
\begin{equation}\label{eq:g}
g:(x,y,z)\longmapsto(y,x,-z)+(0,0,1),
\end{equation}
a $\pi$-rotation about the line through $(0,0,\tfrac12)$ in the direction
$(1,1,0)$. It acts as $A\leftrightarrow C$, $B\leftrightarrow D$, hence
on faces as the double transposition
$(\varphi_{A}\varphi_{C})(\varphi_{B}\varphi_{D})$.

\subsection{$\Qu$ is a quarter of $T$, and the first disk a quarter of
	$\Sigma^{*}_{A}$}
\label{subsec:quarter}

This is the main observation we wish to record.

\begin{proposition}\label{prop:quarter}
    $W(\At)\subset W(\Ct)$ with index $4$, and
    $\Vol(T)=\tfrac23=4\cdot\tfrac16=4\Vol(\Qu)$. Hence, after a rigid
    motion, the disphenoid $T$ of \eqref{eq:verts} is the union of four
    $W(\Ct)$-images of $\Qu$, and the two extra mirrors cut the 
    distinguished disk $\Sigma^{*}_{A}$ of Theorem~\ref{thm:main} 
    into four congruent free-boundary \PP{} disks, each of area
	\begin{equation}\label{eq:quarterarea}
	\frac14\Area(\Sigma^{*}_{A})=\frac{1}{4}\,\frac{\KK}{\KKp}
	\approx 0.195425240337014
	\end{equation}
	and of total curvature $\tfrac14\!\left(-\tfrac{2\pi}{3}\right)
	=-\tfrac{\pi}{6}$.
\end{proposition}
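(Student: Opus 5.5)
The plan is to realize the two ``extra'' mirrors of $W(\Ct)$ inside $T$ as the planes $x=0$ and $y=0$. These are the reflections $\sigma_{x=0},\sigma_{y=0}$, which lie in $D_{2d}$ and hence preserve $T$. Cutting $T$ and $\Sigma^{*}_{A}$ along them should give the four quarters directly.

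\textbf{Index and volume.} These are bookkeeping. By \eqref{eq:chain}, $W(\At)\subset W(\Bt)\subset W(\Ct)$ with each step of index $2$, so the total index is $4$. Also $\Vol(T)=\tfrac23=4\cdot\tfrac16=4\Vol(\Qu)$ by \eqref{eq:Tdata} and \eqref{eq:Qdata}.

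\textbf{The four pieces of $T$.} The planes $x=0$ and $y=0$ are perpendicular and both contain the $z$-axis. The $z$-axis joins the midpoints $(0,0,\pm\tfrac12)$ of the two long edges $v_{1}v_{2}$ and $v_{3}v_{4}$. So the two planes cut $T$ into four congruent pieces, interchanged by the Klein group $\langle\sigma_{x=0},\sigma_{y=0}\rangle$. The piece in $\{x,y\ge0\}$ has vertices
\[
v_{1}=(1,0,\tfrac12),\quad (0,0,\tfrac12),\quad (0,0,-\tfrac12),\quad v_{3}=(0,1,-\tfrac12).
\]
The path through these vertices in this order has legs $(-1,0,0)$, $(0,0,-1)$, $(0,1,0)$. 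These are three mutually perpendicular unit vectors, so the piece is an orthoscheme with unit legs and is congruent to $\Qu$ of \eqref{eq:Qverts}. This realizes $T$ as a union of four $W(\Ct)$-images of $\Qu$. Under this congruence, two faces of the quarter lie in faces of $T$ (namely $\varphi_{2},\varphi_{4}$, which contain $v_{1},v_{3}$), and the other two lie in the mirrors.

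\textbf{The four pieces of $\Sigma^{*}_{A}$.} By Lemma~\ref{lem:contour}(c) and Theorem~\ref{thm:ident}, $\Sigma^{*}_{A}$ is invariant under both $\sigma_{x=0}$ and $\sigma_{y=0}$.
\begin{itemize}
\item[(1)] A minimal surface invariant under a reflection $\sigma_\Pi$, and not contained in $\Pi$, meets $\Pi$ orthogonally along $\Sigma^{*}_{A}\cap\Pi$. Indeed, at a fixed point the tangent plane is $\sigma_\Pi$-invariant and is not $\Pi$, so it contains the normal of $\Pi$. The intersection curve is then a planar geodesic.
\item[(2)] Hence each quarter $\Sigma^{*}_{A}\cap\{\pm x\ge0,\pm y\ge0\}$ is a minimal surface in a copy of $\Qu$ meeting all four faces orthogonally. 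The four quarters are congruent via the Klein group, so each has area $\tfrac14\Area(\Sigma^{*}_{A})=\tfrac14K/K'$ by Corollary~\ref{cor:areas}. Each has total curvature $\tfrac14(-\tfrac{2\pi}{3})=-\tfrac\pi6$ by Theorem~\ref{thm:main}(v).
\item[(3)] From Proposition~\ref{prop:corners}, the four corners $(\pm\tfrac12,\pm\tfrac12,0)$ lie one in each open quarter.
\end{itemize}

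\textbf{Main obstacle: each quarter is a single disk.} One needs $\Sigma^{*}_{A}\cap\{x=0\}$ to be a single arc, and likewise for $y=0$, with the two arcs crossing once at the point where $\Sigma^{*}_{A}$ meets the $z$-axis.
\begin{itemize}
\item[(a)] The crossing point exists: the rotation $C_{2}(z)\in D_{2d}$ preserves $\Sigma^{*}_{A}$ and acts on the disk as an orientation-preserving involution. By Brouwer it has a fixed point, which lies on the $z$-axis.
\item[(b)] I would first try a topological argument: the fixed set of a reflection acting on a disk, reversing orientation, is a single properly embedded arc. Each mirror therefore cuts the disk into two disks, and the second mirror cuts each of these again.
\item[(c)] As a cross-check, Gauss--Bonnet on each quarter should be consistent with the corner angles ($\pi/2$ at the corners on the $\pi/3$ edges, and $\pi/2$ where the surface meets the right-angle edges and the $z$-axis) and with total curvature $-\pi/6$.
\item[(d)] Alternatively, match with Proposition~\ref{prop:patchindex}(i): each quarter is exactly one fundamental \PP{} patch in one $\Ct$ alcove.
\end{itemize}
This identifies each quarter with Schwarz's free-boundary \PP{} disk in $\Qu$ whose corners lie on the edges of lengths $1,\sqrt2,\sqrt3,\sqrt2$.
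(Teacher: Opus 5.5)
Your proposal is correct and is essentially the paper's argument made explicit. The paper reads the index $4$ off \eqref{eq:chain} together with the volume ratio. It implicitly takes the two extra mirrors to be the planes $x=0$, $y=0$ of Goldberg's iterated dissection in \S\ref{subsec:tilings}(d), and lets the reflection symmetry of $\Sigma^{*}_{A}$ in those planes do the rest. You additionally verify three things: that the quarter is an orthoscheme congruent to $\Qu$, that the surface is orthogonal to the mirrors, and, via the fixed-arc property of orientation-reversing involutions of a disk, that each quarter really is a disk.

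One slip in your cross-check (c): each quarter has just one corner on a $\pi/3$ edge (for $\{x,y\ge0\}$, the midpoint of $v_{1}v_{3}$), and its interior angle is $\pi/3$, not $\pi/2$. Only then does Gauss--Bonnet give $-2\pi+\tfrac{\pi}{3}+3\cdot\tfrac{\pi}{2}=-\tfrac{\pi}{6}$; four right angles would give $0$.
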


So the first assertion of this addendum is not independent of the main
theorem: it \emph{is} the $\Ct$ refinement of
Theorem~\ref{thm:main}(ii),(iv) and conversely the main theorem's
distinguished case is four copies of it. The agreement is exact, not
numerical, and both normalizations correspond to the same \PP{} lattice
constant $L_{\PP}=2$.

\subsection{Why the ``tetrahedral four edges'' cannot appear}
\label{subsec:nottet}

The question that prompted these remarks has a clear negative answer. All
four faces of the disphenoid $T$ have equal area, which is precisely why
Smyth's three contours there are \emph{equilateral}, and hence why the
rigid six-distance comparison of Lemma~\ref{lem:contour} could identify
them with $\Tet$ and with $\Oct$. For $\Qu$ the face areas \eqref{eq:Qdata} 
are unequal, so the sides of
each of the three Smyth contours are $\tfrac{\sqrt2}2,\tfrac{\sqrt2}2,
\tfrac12,\tfrac12$ --- in some cyclic order, every order using all four
face areas --- and no equilateral contour occurs. Since both $\Tet$ and
$\Oct$ are equilateral, neither can arise \emph{as a Smyth contour of}
$\Qu$.

What does arise, for the \PP{} disk, is the cyclic order
$(\varphi_{A},\varphi_{C},\varphi_{B},\varphi_{D})$, whose contour has
vertices
\begin{equation}\label{eq:Qcontour}
\tfrac12\big\{(0,0,0),\ (1,0,1),\ (1,1,0),\ (0,1,0)\big\},
\end{equation}
sides $\tfrac{\sqrt2}2,\tfrac{\sqrt2}2,\tfrac12,\tfrac12$, interior angles $\tfrac{\pi}{3},\tfrac{\pi}{2},\tfrac{\pi}{2},\tfrac{\pi}{2}$, and diagonals
$\tfrac{\sqrt2}2,\tfrac{\sqrt3}2$. Doubled, \eqref{eq:Qcontour} differs
from $\Tet=\{(0,0,0),(1,0,1),(1,1,0),(0,1,1)\}$ in a single vertex: the
fourth has descended a cube edge from $(0,1,1)$ to $(0,1,0)$, a vertex
of the complementary regular tetrahedron inscribed in the cube. Since
that vertex is an endpoint of two of the four sides, exactly \emph{two}
sides survive --- the two face diagonals, which still meet at the lone
$\pi/3$ angle --- while the other two are traded for cube edges.

This contour is classical, though it is not usually presented as such.
It is one of the six skew quadrilaterals into which the $\DD$ disk spanning 
Schwarz's skew hexagon \cite[Fig.~4a]{GKMK}, inscribed in six of the twelve 
edges of the cube, is cut. See Figures \ref{fig:sixquadsd}--\ref{fig:sixquadsd1}. 
This quadrilateral is characterized 
by having three right angles, one angle of $\pi/3$, and two sides equal to 
$\sqrt2$ $\times$ the other two \cite{mathcurve}. The Plateau solution it bounds 
is the fundamental patch of $\DD$, parametrized exactly in 
\cite[Eqs.~(5a)--(5c)]{GKMK}. Its area, in the normalization
\eqref{eq:Qcontour}, is
\[
\alpha = \frac{\KKp}{4\,\KK} \approx 0.319815392792752
\]
and notice $4 \times \alpha=1.2792615711\ldots$ is the area spanned by
the doubled contour. That this agrees numerically with the area spanned
by $\Tet$ itself (\S\ref{subsec:terminology}) is a coincidence of scale:
the doubled contour spans one fundamental $\DD$ patch enlarged by a
factor of two, whereas $\Tet$ spans four at unit scale. The total
curvatures, $-\pi/6$ and $-2\pi/3$, distinguish them.

A companion statement holds for $\Oct$, which is
likewise one of six quadrilaterals of a skew hexagon inscribed in the
edges of a regular octahedron, with two right angles, two angles of
$\tfrac{\pi}{3}$, and four equal sides \cite{mathcurve}, bounding the
fundamental patch of $\PP$.  See Figures \ref{fig:sixquadsp}--\ref{fig:sixquadsp1}.

\subsection{Corners}
\label{subsec:Qcorners}

Because $\operatorname{Sym}(\Qu)=\{1,g\}$ is so small, the corner
analysis of \S\ref{sec:addmA} does not go through, and indeed the
corners are not all midpoints.

\begin{proposition}\label{prop:Qcorners}
	Each of the three disks in $\Qu$ is $g$-invariant, since $g$ fixes all
	three cyclic orders. For the \PP{} disk, $g$ reverses the edges $AC$ and
	$BD$ and interchanges $AD$ with $BC$. Consequently exactly two of its
	four corners are forced to be midpoints, namely
	\[
	\tfrac12(A+C)=\left(0,0,\tfrac12\right)\ \text{on the unit edge }AC,
	\qquad
	\tfrac12(B+D)=\left(\tfrac12,\tfrac12,\tfrac12\right)
	\ \text{on the }\sqrt3\text{ edge }BD
	\]
	while the remaining two lie on the two $\sqrt2$ edges in $g$-conjugate
	positions, undetermined by symmetry.
\end{proposition}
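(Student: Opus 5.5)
The plan mirrors the proof of Proposition~\ref{prop:corners}: first show that each of the three Smyth disks in $\Qu$ is carried to itself by $g$, then read off the corners from the way $g$ acts on the edges of $\Qu$.

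\emph{Step 1: $g$ fixes the three cyclic classes.} A cyclic order of four faces, up to rotation and reflection, is determined by its pair of ``diagonal'' (non-consecutive) pairs. This is the bookkeeping already used in the verification notebook. The three classes therefore correspond to the three pairings
\[
\{\varphi_A\varphi_B\}\{\varphi_C\varphi_D\},\qquad
\{\varphi_A\varphi_C\}\{\varphi_B\varphi_D\},\qquad
\{\varphi_A\varphi_D\}\{\varphi_B\varphi_C\}.
\]
Under $(\varphi_A\varphi_C)(\varphi_B\varphi_D)$ each pairing is mapped to itself: the first two pairings are swapped internally, and the third is sent to $\{\varphi_C\varphi_B\}\{\varphi_D\varphi_A\}$, which is the same pairing. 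So $g$ fixes every class.

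\emph{Step 2: each disk is $g$-invariant.} Let $\bar g$ be the linear part of $g$. Because $g$ fixes a class, $\bar g$ sends the Smyth contour $Q$ (with edge vectors $c_i\nu_i$) to a congruent copy of itself, possibly traversed in reverse and cyclically relabelled. Uniqueness of the Plateau solution then forces the Plateau disk, and hence its conjugate $\Sigma^{*}$, to be equivariant. Uniqueness is the one real input, and I would obtain it exactly as in the remark on the forward direction, via Nitsche's theorem \cite{NitscheUniq}. For the \PP{} contour \eqref{eq:Qcontour}, with interior angles $\pi/3,\pi/2,\pi/2,\pi/2$, the total curvature is $2\pi/3+3\pi/2=13\pi/6<4\pi$. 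Every quadrilateral has total curvature at most $4\pi$, so for the other two contours I would only need to check that the inequality is strict, i.e.\ that they are nondegenerate. This is the step I expect to need the most care: one must make sure the induced congruence of the conjugate disks is realized by $g$ itself, inside $\Qu$, and not by some other isometry with the same linear part. Since $\Sigma^{*}$ and $g(\Sigma^{*})$ are both free-boundary disks in $\Qu$ with the same face order, and the construction places $\Sigma^{*}$ in $\Qu$ by a unique homothety (Proposition~\ref{prop:scaling}), the translation part is forced.

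\emph{Step 3: corners of the \PP{} disk.} Its face order is $(\varphi_A,\varphi_C,\varphi_B,\varphi_D)$. A corner lies on the edge shared by two consecutive faces, and $\varphi_X\cap\varphi_Y$ is the edge joining the two vertices other than $X,Y$. This gives the four edges
\[
\varphi_A\cap\varphi_C=BD,\qquad \varphi_C\cap\varphi_B=AD,\qquad \varphi_B\cap\varphi_D=AC,\qquad \varphi_D\cap\varphi_A=BC.
\]
Since $g$ swaps $A\leftrightarrow C$ and $B\leftrightarrow D$, it reverses $AC$ and $BD$ and interchanges $AD$ with $BC$. Each edge carries exactly one corner, and that corner is fixed by $g$ on the two reversed edges. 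A point of an edge that is fixed by a map reversing the edge is its midpoint, which gives $\tfrac12(A+C)=(0,0,\tfrac12)$ on $AC$ and $\tfrac12(B+D)=(\tfrac12,\tfrac12,\tfrac12)$ on $BD$. One checks $|AC|=1$ and $|BD|=\sqrt3$, and that $|AD|=|BC|=\sqrt2$. On these last two edges $g$ only relates the two corners to each other, so they lie in $g$-conjugate positions. Since $\operatorname{Sym}(\Qu)=\{1,g\}$, no further symmetry can pin them down. Thus precisely two corners are forced to be midpoints, as claimed.
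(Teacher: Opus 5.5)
Your proposal is correct and is essentially the paper's argument. The paper likewise deduces $g$-invariance from the fact that $g$ fixes all three cyclic orders, and then reads off the corners from $g$ reversing $AC$ and $BD$ while swapping $AD$ with $BC$, exactly as in your Steps~1 and~3; your Step~2 makes explicit, via Plateau uniqueness, a step the paper asserts without comment. One detail there is worth stating: $g$ \emph{reverses} the cyclic order $(\varphi_A,\varphi_C,\varphi_B,\varphi_D)$, so uniqueness makes the Plateau disk invariant under a reflection with linear part $-\bar g$ (in the plane $x+y=\tfrac12$ for the contour \eqref{eq:Qcontour}), and conjugation, acting through an antiholomorphic reparametrization, turns this into a symmetry of $\Sigma^{*}$ with linear part $\bar g$.
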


This matches the structure of \cite[Fig.~1a]{GK} exactly: their marked
points $O'$ and $P'$ are flat points bisecting a cube edge and a body
diagonal respectively, and their fourth datum is a \emph{mirror pair} at
$Q'$, of which they remark that ``there is, however, no simple algebraic
expression for the coordinates of the singular point $Q$''. That remark
is the obstruction to a short closed-form corner list here, and it is why
we do not attempt one.

\subsection{A lattice-free identification test}
\label{subsec:invariant}

The bookkeeping of \S\ref{sec:lattice} required guessing the lattice
constant $L$. It can be eliminated, which both shortens that section and
settles the present question. Let $\Sigma$ be a free-boundary minimal
disk in a kaleidoscopic cell $\Qu$, meeting the four faces orthogonally,
with corners on edges of dihedral angles
$\delta_{1},\dots,\delta_{4}$.

\begin{lemma}[Total curvature]\label{lem:gb}
	Each boundary arc is a planar geodesic, and at a corner on an edge of
	dihedral angle $\delta$ the tangent plane of $\Sigma$ is perpendicular to
	that edge, so the interior angle of $\Sigma$ equals $\delta$. Hence by
	Gauss--Bonnet
	\begin{equation}\label{eq:gb}
	\int_{\Sigma}K\,dA=-2\pi+\sum_{i=1}^{4}\delta_{i}.
	\end{equation}
\end{lemma}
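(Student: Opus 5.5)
The plan has three parts: show that each free-boundary arc is a planar geodesic, compute the interior angle of $\Sigma$ at each corner, and then apply Gauss--Bonnet.

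\emph{Boundary arcs.} Let $\gamma$ be a boundary arc of $\Sigma$ lying in the face plane $\Pi$, with $\Sigma$ meeting $\Pi$ orthogonally. Along $\gamma$ the unit normal $N$ of $\Sigma$ is tangent to $\Pi$, and $N$ is perpendicular to $\gamma'$. The curvature vector $\gamma''$ lies in $\Pi$ and is perpendicular to $\gamma'$, so it is parallel to $N$. Hence the geodesic curvature $\langle \gamma'', \eta\rangle$ vanishes, where $\eta$ is the inner conormal, which is normal to $\Pi$. This is the same boundary dictionary as in \S\ref{subsec:smyth}: a free boundary met orthogonally is a planar geodesic of the surface and a line of curvature. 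It follows that $\int_{\partial\Sigma}k_g\,ds=0$ along the smooth part of the boundary.

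\emph{Corners.} At a corner $p$ on the edge $e=\Pi_i\cap\Pi_j$, the two incoming arcs are tangent at $p$ to $\Pi_i$ and $\Pi_j$ respectively. The conormal of the arc in $\Pi_i$ is $\pm\nu_i$, and similarly the conormal of the arc in $\Pi_j$ is $\pm\nu_j$. Both conormals lie in $T_p\Sigma$, so $T_p\Sigma\supset\operatorname{span}(\nu_i,\nu_j)=e^{\perp}$; that is, the tangent plane at $p$ is the plane perpendicular to $e$. Inside this plane the two arc tangents are the unit vectors in $\Pi_i\cap e^{\perp}$ and $\Pi_j\cap e^{\perp}$ pointing into $T$. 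These are exactly the two rays cutting out the dihedral wedge, so the angle between them is $\delta$. The surface lies locally inside the wedge, which fixes the choice of angle as $\delta$ rather than $\pi-\delta$ or $2\pi-\delta$. The exterior turning angle at the corner is therefore $\pi-\delta$.

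To make this rigorous, I would apply Schwarz reflection to get regularity up to the corner. Reflecting in both faces $2\pi/\delta$ times (using that $\delta=\pi/n$, since $T$ is a kaleidoscopic cell) closes $\Sigma$ up around $e$. This produces a minimal surface that is smooth at $p$, or at worst has a branch point. The corner angle of $\Sigma$ is then $1/(2n)$ of the total angle around $p$, which is $2\pi$, giving $\pi/n=\delta$. I expect this to be the main obstacle. Ruling out a branch point needs embeddedness, or the observation that the tangent plane is $e^{\perp}$ with the arcs meeting transversally. I would try to argue that a branch point of order $m$ would give a corner angle of $(m+1)\delta$ exceeding the wedge, which is incompatible with $\Sigma\subset T$ being embedded.

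\emph{Gauss--Bonnet.} For the disk $\Sigma$, with $\chi=1$, Gauss--Bonnet reads
\[
\int_\Sigma K\,dA+\int_{\partial\Sigma}k_g\,ds+\sum_{i=1}^4(\pi-\delta_i)=2\pi .
\]
The geodesic-curvature term vanishes by the first step, so $\int_\Sigma K\,dA=-2\pi+\sum_i\delta_i$.

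As a consistency check, for $\Sigma^{*}_{A}$ the four corners lie on $\pi/3$ edges, giving $-2\pi+4\pi/3=-2\pi/3$. For $\Sigma^{*}_{B}$ the corners lie on two $\pi/2$ edges and two $\pi/3$ edges, giving $-2\pi+\pi+2\pi/3=-\pi/3$. Both match Theorem~\ref{thm:main}(v).
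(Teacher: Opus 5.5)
Your proposal is correct and follows the paper's argument: free-boundary arcs are planar geodesics, the tangent plane at each corner is perpendicular to the edge so the interior angle is $\delta$, and Gauss--Bonnet with exterior angles $\pi-\delta_i$ gives the formula. Your Schwarz-reflection argument for regularity at the corners adds a justification the paper takes for granted. Its exclusion of a corner branch point needs only that $\Sigma$ lies locally in the dihedral wedge, not embeddedness.
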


\noindent
(Check: for $\Sigma^{*}_{A}$, four corners with $\delta=\pi/3$ give
$-2\pi+4\pi/3=-2\pi/3$; for $\Sigma^{*}_{B}$, angles
$\pi/2,\pi/3,\pi/2,\pi/3$ give $-2\pi+5\pi/3=-\pi/3$. Both agree with
Theorem~\ref{thm:main}(v).)

\begin{proposition}[The invariant $\mathcal{J}$]\label{prop:J}
	Suppose $\Sigma$ extends by reflection to a triply periodic minimal
	surface with conventional cubic cell of edge $L$, Euler characteristic
	$\chi$ per cell, and normalized area $\sigma=\Area_{\rm cell}/L^{2}$.
	Then the quantity
	\begin{equation}\label{eq:J}
	\boxed{\ \mathcal{J}(\Sigma):=
		\frac{\left(\int_{\Sigma}K\,dA\right)\Vol(\Qu)^{2}}{\Area(\Sigma)^{3}}
		\ =\ \frac{2\pi\chi}{\sigma^{3}}\ }
	\end{equation}
	is independent of $L$, of the scale, and of how many fundamental patches
	$\Sigma$ comprises. It is therefore an invariant of the extended surface
	alone, computable from purely local data.
\end{proposition}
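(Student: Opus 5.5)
The plan is to use the two simultaneous conditions \eqref{eq:threechecks} of \S\ref{sec:lattice} and eliminate $L$ algebraically. Write $V=\Vol(\Qu)$, $A=\Area(\Sigma)$ and $\mathcal{K}=\int_\Sigma K\,dA$. The reflection extension tiles the conventional cube of edge $L$ by $N=L^{3}/V$ copies of $\Qu$. Each copy carries an isometric copy of $\Sigma$, because the extension is generated by reflections in the faces of $\Qu$ and $\Sigma$ meets them orthogonally (Proposition~\ref{prop:onlyreflection}). Consequently
\[
\sigma L^{2}=N A=\frac{L^{3}A}{V},\qquad 2\pi\chi=N\mathcal{K}=\frac{L^{3}\mathcal{K}}{V}.
\]
For the second identity I will invoke Gauss--Bonnet on the closed quotient surface (the cell with opposite faces identified): the total curvature per cell is $2\pi\chi$, and it is additive over the tiles. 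The interfaces between tiles are geodesic arcs, and the corner angle contributions cancel after gluing, so the identity holds as stated.

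From the first relation I get $L=\sigma V/A$. Substituting this into the second gives
\[
2\pi\chi=\frac{\sigma^{3}V^{3}}{A^{3}}\cdot\frac{\mathcal{K}}{V}=\frac{\sigma^{3}V^{2}\mathcal{K}}{A^{3}},
\]
which rearranges to \eqref{eq:J}.

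The invariance claims then follow by inspection. Under a homothety by $\lambda$, the numerator scales as $\lambda^{0}\cdot\lambda^{6}$ and the denominator as $\lambda^{6}$, since total curvature is scale invariant. If $\Sigma$ is replaced by a union of $m$ patches in a cell $m$ times larger, then $\mathcal{K}$ and $V$ scale by $m$ and $A$ by $m$, giving an overall factor $m\cdot m^{2}/m^{3}=1$. The right-hand side $2\pi\chi/\sigma^{3}$ contains no $L$ at all. For the locality claim, Lemma~\ref{lem:gb} expresses $\mathcal{K}$ through the corner dihedral angles, so only $A$ and $V$ are needed beyond local data.

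The main obstacle is the counting step $N=L^{3}/V$ with every tile carrying a full congruent copy of $\Sigma$. This requires that the conventional cubic cell be a union of whole fundamental domains of the reflection group, which holds because it is a fundamental domain of a translation subgroup of finite index. It also requires that $\chi$ and $\sigma$ refer to that same cell, which must be an honest period cell of the extended surface. If the true period lattice is coarser or finer, both $\sigma L^{2}$ and $2\pi\chi$ rescale by the same index, and $\mathcal{J}$ is unchanged. I will remark on this explicitly, and sanity-check it against Proposition~\ref{prop:lattice}: for $\PP$ with $\chi=-4$ and $\sigma=3K/K'$, and for $\Sigma^{*}_{A}$ in $T$, both sides agree.
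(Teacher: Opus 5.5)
Your proof is correct and is essentially the paper's. The paper writes $\Area(\Sigma)=\sigma L^{2}/n$, $\int_{\Sigma}K\,dA=2\pi\chi/n$, $\Vol(\Qu)=L^{3}/n$ and substitutes, which is your computation with $N=n$ (it eliminates $n$ where you eliminate $L$).

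One step is misjustified, though the conclusion is unaffected. The cubic cell is generally \emph{not} a union of whole tiles. For $T$ with $L_{\DD}=4$, its faces $x=\mathrm{const}$ are not mirrors of $W(\At)$, and for $\PP$ with $L_{\PP}=2$ the period $(2,0,0)$ is not even in $W$. The count $N=L^{3}/V$ nevertheless holds, because $A/V$ and $\mathcal{K}/V$ are the area and total curvature per unit volume of the extended surface, hence the same for every period cell. This is in effect your closing remark about coarser or finer lattices.
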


\begin{proof}
	If $\Qu$ is one of $n$ cells per conventional cube then
	$\Area(\Sigma)=\sigma L^{2}/n$, $\int K=2\pi\chi/n$ and
	$\Vol(\Qu)=L^{3}/n$; substitute.
\end{proof}

For the two classical surfaces, using $\sigma_{\PP}=3\KK/\KKp$,
$\chi_{\PP}=-4$ and $\sigma_{\DD}=3\KKp/\KK$, $\chi_{\DD}=-16$,
\begin{equation}\label{eq:JPD}
\mathcal{J}_{\PP}=-\frac{8\pi}{27}\left(\frac{\KKp}{\KK}\right)^{3}
=-1.948741\ldots,
\qquad
\mathcal{J}_{\DD}=-\frac{32\pi}{27}\left(\frac{\KK}{\KKp}\right)^{3}
=-1.778517\ldots .
\end{equation}
Both values are reproduced \emph{exactly} by the disks of
Theorem~\ref{thm:main}: $\mathcal{J}(\Sigma^{*}_{A})=\mathcal{J}_{\PP}$
and $\mathcal{J}(\Sigma^{*}_{B})=\mathcal{J}_{\DD}$, identically in
$\KK,\KKp$. Applying \eqref{eq:gb} and \eqref{eq:J} to the three disks in
$\Qu$, where $\Vol(\Qu)^{2}=1/36$, gives the ensuing.

\begin{table}[h]
	\centering
	\caption{The three disks in $\Qu$. Areas of the second and third are
		Brakke's Surface Evolver values.}
	\label{tab:Qdisks}
	\begin{tabular}{llllc}
		\toprule
		corner edges & $\sum\delta_{i}$ & $\int K\,dA$ & $\Area$
		& $\mathcal{J}$\\
		\midrule
		$\sqrt3,\sqrt2,1,\sqrt2$
		& $\tfrac{11\pi}{6}$ & $-\tfrac{\pi}{6}$
		& $\KK/(4\KKp) \approx 0.195425240337014$ & $-1.948741$\\
		$1,\sqrt2,1,\sqrt2$
		& $\tfrac{3\pi}{2}$ & $-\tfrac{\pi}{2}$
		& $0.288674897531445$ & $-1.813804$\\
		$1,1,1,\sqrt3$
		& $\tfrac{4\pi}{3}$ & $-\tfrac{2\pi}{3}$
		& $0.292539844325784$ & $-2.323815$\\
		\midrule
		\multicolumn{4}{r}{for comparison:\quad $\mathcal{J}_{\PP}$}
		& $-1.948741$\\
		\multicolumn{4}{r}{$\mathcal{J}_{\DD}$} & $-1.778517$\\
		\bottomrule
	\end{tabular}
\end{table}

\begin{remark}[The answer is no, and why]\label{rem:noD}
	The first row reproduces $\mathcal{J}_{\PP}$ exactly, confirming \PP{} by
	a route that never mentions the lattice. The second and third rows differ
	from $\mathcal{J}_{\DD}$ by $2.0\%$ and $30.7\%$ respectively. Since
	$\mathcal{J}$ is an invariant of the extended surface, neither of the
	other two disks can be a patch of \DD. This is the formal content of the
	``evidently no'' above; Brakke's visual method then corroborates rather
	than carries the conclusion. For the record, and without pursuing it,
	\eqref{eq:J} predicts $\sigma=3.46410\ldots$ if the second surface has
	$\chi=-12$ per cubic cell, and $\sigma=3.51048\ldots$ if the third has
	$\chi=-16$; testing these against tabulated data for Schoen's I-WP and
	Neovius' surface is left to the interested reader. We note only, as a 
	caution, that $\sigma=3.46410\ldots$ agrees with
	$2\sqrt3=3.4641016\ldots$ to six digits; equality would force the second
	area to be exactly $1/(2\sqrt3)=0.288675134594813\ldots$, whereas
	Brakke's value is $0.288674897531445$, differing by $2.4\times10^{-7}$.
	Assuming his digits are reliable at that level, the coincidence is
	spurious.
\end{remark}

\begin{remark}
	Proposition~\ref{prop:J} supersedes the two-condition test of
	\S\ref{sec:lattice}: instead of solving for $L$ and checking an integer,
	one evaluates a single scale-free number. Readers revisiting
	Proposition~\ref{prop:lattice} may prefer \eqref{eq:J}, which also
	dispenses with the need to know the patch count in advance.
\end{remark}

What is left for an interested reader is not the Plateau problem itself
--- that was solved by Schwarz, and parametrized exactly in
\cite{GKMK} --- but the formal passage from it to the free-boundary disk:
the analog, for $W(\Ct)$, of the contour identification and
conjugation argument carried out for $W(\At)$ in
\S\ref{sec:addmA}. The obstruction is the one noted in
\S\ref{subsec:Qcorners}: with $|\!\operatorname{Sym}(\Qu)|=2$, symmetry
forces only two of the four corners, and the remaining pair is the
singular point for which \cite[\S4]{GK} records that no simple algebraic
expression is known.

\subsection{Verification code for Addendum B}

\begin{verbatim}
ClearAll["Global`*"];
(* --- B.1  the tetrahedron ------------------------------------------- *)
TT = {{0,0,0},{1,0,0},{0,0,1},{0,1,1}};          (* A, B, C, D *)
lbl = {"A","B","C","D"};
volTT = Abs[Det[{TT[[2]]-TT[[1]], TT[[3]]-TT[[1]], TT[[4]]-TT[[1]]}]]/6;
Print["edges = ", InputForm[Sort[Norm[#[[1]]-#[[2]]] & /@ Subsets[TT,{2}]]]];
Print["Vol = ", volTT, "   (expect 1/6 = Vol(T)/4)"];

ar[a_,b_,c_] := Norm[Cross[b-a, c-a]]/2;
fa = Table[ar @@ Delete[TT, i], {i,4}];          (* fa[[i]] = |phi_i| *)
Print["face areas = ", InputForm[Simplify[fa]],
"    all equal? ", SameQ @@ Simplify[fa], "   (expect False)"];
Print["S = ", Simplify[Total[fa]], ",   r = ", Simplify[3 volTT/Total[fa]],
"  == (Sqrt[2]-1)/2 ? ", Simplify[3 volTT/Total[fa] - (Sqrt[2]-1)/2] === 0];

nu = {{1,0,1}/Sqrt[2], {-1,0,0}, {0,1,-1}/Sqrt[2], {0,-1,0}};
Print["outward? ", Table[Simplify[nu[[i]].(TT[[j]] - Mean[Delete[TT,i]])] <= 0,
{i,4},{j,4}] // Flatten // Union, "   (expect {True})"];
Print["sum c.nu = ", Simplify[Total[fa nu]], "   (expect {0,0,0})"];

(* --- B.2  dihedral angles, indexed by the EDGE they sit on ---------- *)
shared[i_,j_] := Complement[Range[4], {i,j}];    (* phi_i ^ phi_j = that edge *)
dih = Association @@ Flatten@Table[
With[{kl = shared[i,j]},
StringJoin[lbl[[kl]]] -> {Simplify[Norm[TT[[kl[[1]]]] - TT[[kl[[2]]]]]],
Simplify[ArcCos[-nu[[i]].nu[[j]]]]}],
{i,3},{j,i+1,4}];
Print["edge -> {length, dihedral}:"];
Print[InputForm[dih]];
Print["sum of dihedrals = ", Simplify[Total[Last /@ Values[dih]]],
"   (expect 7Pi/3 -- same as T, as both are Coxeter cells)"];

(* --- B.3  the three Smyth contours, and the user's edge lists ------- *)
cycles = {{1,3,2,4}, {1,2,3,4}, {1,3,4,2}};
contour[cyc_] := Most@FoldList[Plus, {0,0,0}, (fa nu)[[cyc]]];
cornerEdges[cyc_] := Table[With[{kl = shared[cyc[[k]], cyc[[Mod[k+1,4,1]]]]},
{StringJoin[lbl[[kl]]], Simplify[Norm[TT[[kl[[1]]]] - TT[[kl[[2]]]]]],
Simplify[ArcCos[-nu[[cyc[[k]]]] . nu[[cyc[[Mod[k+1,4,1]]]]]]]}], {k,4}];
Do[Module[{c = contour[cyc], ce = cornerEdges[cyc]},
Print["--- cycle ", cyc, " ---"];
Print["  corner edges   = ", ce[[All,1]],
"   lengths = ", InputForm[ce[[All,2]]]];
Print["  sum delta_i    = ", Simplify[Total[ce[[All,3]]]],
"   =>  Int K dA = ", Simplify[-2 Pi + Total[ce[[All,3]]]]];
Print["  contour sides  = ",
InputForm[Simplify[Norm[#2-#1] & @@@ Partition[c,2,1,1]]]];
Print["  diagonals      = ",
InputForm[Simplify[{Norm[c[[3]]-c[[1]]], Norm[c[[4]]-c[[2]]]}]]];
Print["  angles         = ", Simplify[Table[
ArcCos[Normalize[c[[Mod[k-1,4,1]]]-c[[k]]] . 
Normalize[c[[Mod[k+1,4,1]]]-c[[k]]]], {k,4}]]]], {cyc, cycles}];

(* --- B.4  the P contour vs the tetrahedral four edges --------------- *)
cP = contour[{1,3,2,4}];
Print["P contour (doubled) = ", Simplify[2 cP]];
Print["tetrahedral 4 edges = ", {{0,0,0},{1,0,1},{1,1,0},{0,1,1}}];
Print["vertices differing  = ",
Count[MapThread[Unequal, {Simplify[2 cP],
{{0,0,0},{1,0,1},{1,1,0},{0,1,1}}}], True], "   (expect 1)"];

(* --- B.5  the half-turn g, and which corners are forced ------------- *)
g[p_] := {p[[2]], p[[1]], -p[[3]]} + {0,0,1};
pv = Table[First@FirstPosition[TT, g[TT[[i]]]], {i,4}];
Print["g on vertices = ", pv, "   (expect {3,4,1,2}, i.e., (AC)(BD))"];
Print["g isometry?  ", Union@Flatten@Table[
Simplify[Norm[g[TT[[i]]]-g[TT[[j]]]] - Norm[TT[[i]]-TT[[j]]]],
{i,4},{j,4}], "   (expect {0})"];
Print["g fixes the P cycle (up to rotation/reversal)? ",
MemberQ[Join[NestList[RotateLeft, #, 3] &@ {1,3,2,4},
NestList[RotateLeft, #, 3] &@ Reverse[{1,3,2,4}]],
pv[[{1,3,2,4}]]]];
Print["forced corners = ", {(TT[[1]]+TT[[3]])/2, (TT[[2]]+TT[[4]])/2},
"   on edges AC (len 1) and BD (len Sqrt[3])"];
Print["free corners lie on edges AD, BC, swapped by g."];

(* --- B.6  the invariant J ------------------------------------------- *)
KK = EllipticK[1/4];  KP = EllipticK[3/4];
JP = -(8 Pi/27) (KP/KK)^3;      (* sigma_P = 3K/K',  chi_P = -4  *)
JD = -(32 Pi/27) (KK/KP)^3;     (* sigma_D = 3K'/K,  chi_D = -16 *)
Print["J_P = ", N[JP,12], "     J_D = ", N[JD,12]];
Jof[intK_, area_] := intK volTT^2/area^3;
intK  = {-Pi/6, -Pi/2, -2 Pi/3};                 (* from B.3, Gauss-Bonnet *)
areas = {KK/(4 KP), 0.288674897531445`15, 0.292539844325784`15};
JJ = MapThread[Jof, {intK, areas}];
Print["J(disk 1,2,3) = ", N[JJ,12]];
Print["disk 1 == P, symbolically ? ", Simplify[JJ[[1]] - JP] === 0];
Print["relative gap to J_D: ", N[(JJ - JD)/JD, 4],
"   (disk 1 irrelevant; disks 2,3 exclude D)"];

(* --- B.7  the L = 2 branch, and the 2 Sqrt[3] near-miss ------------- *)
Print["chi on branch L=2  = ", Simplify[24 intK/Pi], "   (expect {-4,-12,-16})"];
Print["sigma = 12*area    = ", N[12 areas, 12]];
Print["check sigma_1 == 3K/K' ? ", Simplify[12 areas[[1]] - 3 KK/KP] === 0];
Print["sigma_2            = ", N[12 areas[[2]], 15]];
Print["2 Sqrt[3]          = ", N[2 Sqrt[3], 15]];
Print["area_2             = ", areas[[2]]];
Print["1/(2 Sqrt[3])      = ", N[1/(2 Sqrt[3]), 16]];
Print["difference         = ", N[areas[[2]] - 1/(2 Sqrt[3]), 3],
"   => sigma_2 =/= 2 Sqrt[3]"];

(* --- B.8  cross-check: J reproduces the main theorem --------------- *)
volT = 2/3;
Print["J(Sigma*_A) == J_P ? ",
Simplify[(-2 Pi/3) volT^2/(KK/KP)^3 - JP] === 0];
Print["J(Sigma*_B) == J_D ? ",
Simplify[(-Pi/3) volT^2/(KP/(2 KK))^3 - JD] === 0];

(* --- B.9  which sides of the tetrahedral four edges survive *)
tet  = {{0,0,0},{1,0,1},{1,1,0},{0,1,1}};
newc = {{0,0,0},{1,0,1},{1,1,0},{0,1,0}};
sd[p_] := Simplify[Norm[#2-#1] & @@@ Partition[p,2,1,1]];
Print["tet sides = ", InputForm[sd[tet]]];
Print["new sides = ", InputForm[sd[newc]]];
Print["sides surviving = ", Count[MapThread[Equal, {sd[tet], sd[newc]}], True],
"   (expect 2)"];
Print["vertices differing = ", Count[MapThread[Unequal, {tet, newc}], True],
"   (expect 1)"];
ang[p_] := Simplify[Table[ArcCos[Normalize[p[[Mod[k-1,4,1]]]-p[[k]]] .
Normalize[p[[Mod[k+1,4,1]]]-p[[k]]]]/Degree, {k,4}]];
Print["tet angles = ", ang[tet], "   new angles = ", ang[newc]];
Print["Count[60] = ", Count[ang[newc], 60], " <-> surviving pairs"];

(* --- B.10  classical identification: area of the Plateau disk *)
KK = EllipticK[1/4]; KP = EllipticK[3/4];
P0 = Sqrt[2] + 1;  rT = (Sqrt[2]-1)/2;        (* Smyth contour, cell inradius *)
s  = Simplify[Solve[s^2 (KK/(4 KP)) == s P0 rT/2, s][[2,1,2]]];
Print["homothety s = ", Simplify[s], "  == K'/K ? ", Simplify[s - KP/KK] === 0];
A0 = Simplify[s P0 rT/2];
Print["area in (D.6) = ", Simplify[A0], " = ", N[A0, 16]];
Print["== K'/(4K) ? ", Simplify[A0 - KP/(4 KK)] === 0];
Print["doubled: ", N[4 A0, 16], "   K'/K = ", N[KP/KK, 16]];
Print["Int K: one patch ", -Pi/6, "  vs tetrahedral ", -2 Pi/3, "   ratio ", 4];
\end{verbatim}

\begin{figure}[H]
	\vspace*{3cm}   
	\centering
	\includegraphics[width=\textwidth,height=\textheight,keepaspectratio]{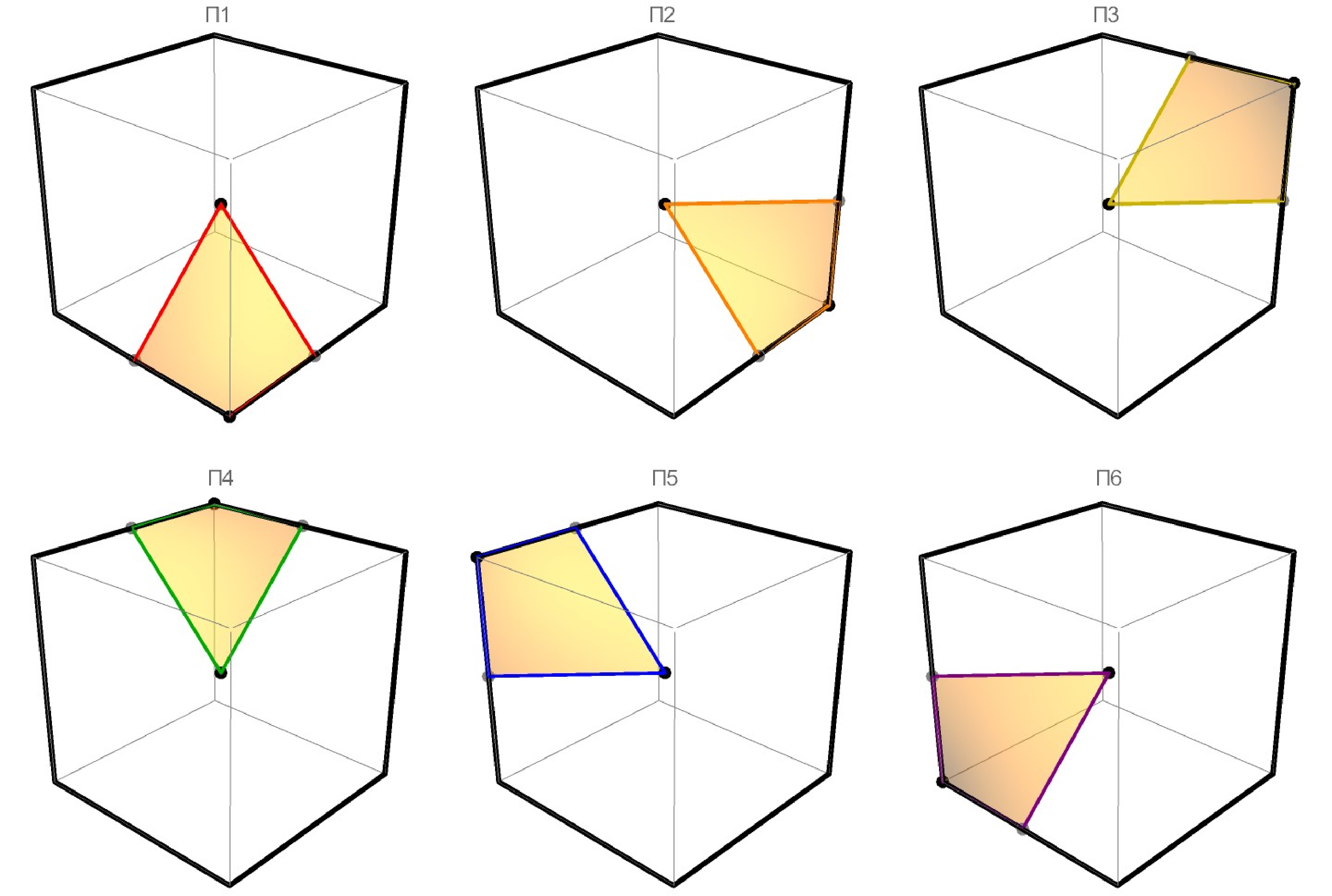}
	\caption{The six skew quadrilaterals of the cubic six edges.	
		The heavy black closed curve is Schwarz's hexagon --- the Petrie polygon
		of the cube, formed by six of its twelve edges and passing through six of
		its eight vertices; the two omitted vertices, $(0,1,0)$ and $(1,0,1)$,
		are an antipodal pair on the body diagonal that carries a $\bar3$ axis,
		and they are flat points of $\DD$. The three two-fold axes of the surface
		cut the $\DD$ disk spanning this hexagon into the six congruent patches
		$\Pi_{1},\dots,\Pi_{6}$ shown here. Each $\Pi_{k}$ has vertices $O$ (the
		cube center, black dot), the two midpoints $m_{k},m_{k+1}$ of consecutive
		hexagon edges (grey dots), and the hexagon vertex $h_{k+1}$ between them
		(black dot); its sides are therefore two interior radii of length
		$1/\sqrt2$ and two half-edges of length $1/2$, its diagonals are
		$\sqrt3/2$ and $1/\sqrt2$, and its interior angles are
		$60^{\circ},90^{\circ},90^{\circ},90^{\circ}$, the $60^{\circ}$ angle
		being at $O$ in every panel. Each $\Pi_{k}$ is thus congruent to the
		contour \eqref{eq:Qcontour}: explicitly, $\Pi_{1}$ is the image of
		\eqref{eq:Qcontour} under the half-turn
		$(x,y,z)\mapsto(-x,-y,z)+(1,\tfrac12,0)$ about the line
		$x=\tfrac12$, $y=\tfrac14$. The quadrilaterals are indexed by the
		\emph{faces} of the cube rather than its vertices --- the three vertices
		of $\Pi_{k}$ other than $O$ are coplanar in a single face, namely
		$z=0,x=1,y=1,z=1,x=0,y=0$ for $k=1,\dots,6$ --- which is why there are
		six and not eight. The shaded films are bilinear interpolants drawn to
		display the combinatorics; they are \emph{not} the minimal surfaces,
		whose exact parametrization is \cite[Eqs.~(5a)--(5c)]{GKMK}.}
	\label{fig:sixquadsd}
\end{figure}

\begin{figure}[H]
	\vspace*{3cm}   
	\centering
	\includegraphics[width=0.65\textwidth]{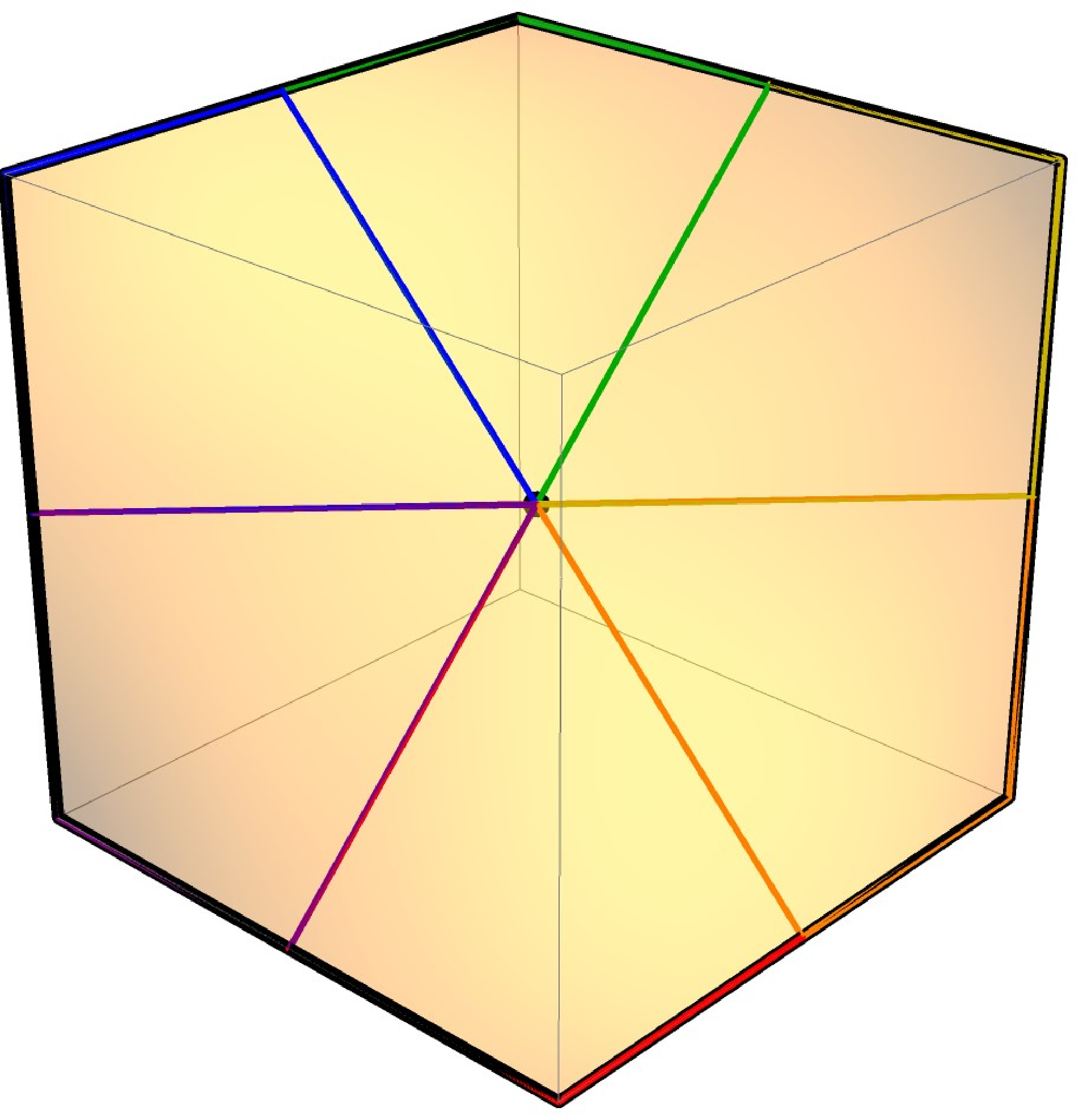}
	\caption{The same six quadrilaterals superimposed, in the colors of
		Figure~\ref{fig:sixquadsd}. All six meet at the cube center, each
		contributing its $60^{\circ}$ angle, for a total of $360^{\circ}$; each
		of the six hexagon edges is halved and shared between two neighbors.
		Since every boundary arc is straight, hence geodesic, Gauss--Bonnet gives
		$\int_{\Pi_{k}}\!K\,dA=-2\pi+\tfrac{11\pi}{6}=-\tfrac{\pi}{6}$ for each
		patch, and $-\pi$ for the six-patch piece spanning the hexagon. Again the
		films are bilinear stand-ins, not minimal surfaces.}
	\label{fig:sixquadsd1}
\end{figure}

\begin{figure}[H]
	\vspace*{1cm}   
	\centering
	\includegraphics[width=\textwidth,height=\textheight,keepaspectratio]{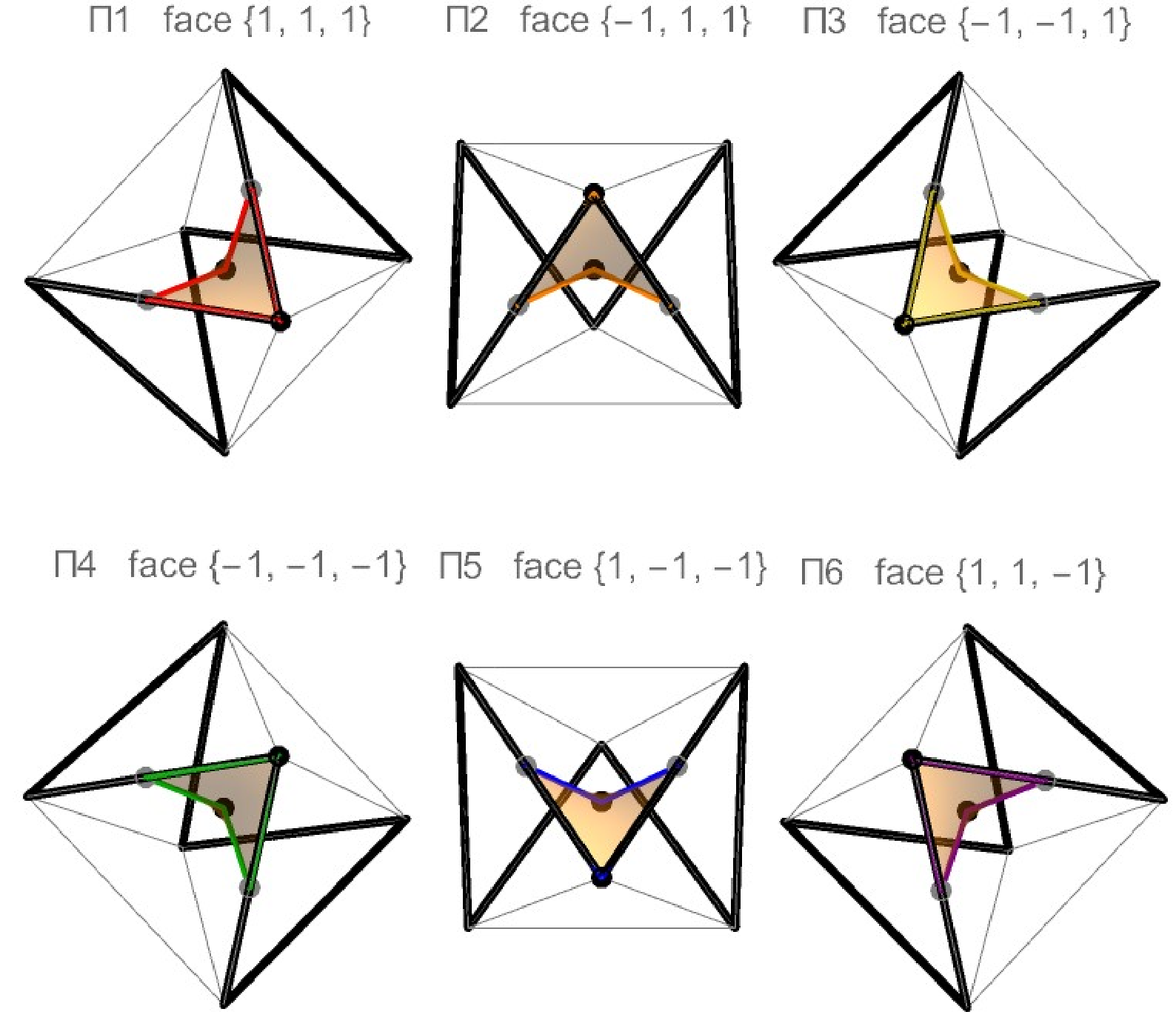}
	\caption{The six skew quadrilaterals of the octahedral six edges.
		The heavy black closed curve is Schwarz's second hexagon --- the Petrie
		polygon of the regular octahedron, formed by six of its twelve edges and
		passing through \emph{all} six of its vertices. The three two-fold axes
		of the surface cut the $\PP$ disk spanning this hexagon into the six
		congruent patches $\Pi_{1},\dots,\Pi_{6}$ shown here. Each $\Pi_{k}$ has
		vertices $O$ (the octahedron center, black dot), the two midpoints
		$m_{k},m_{k+1}$ of consecutive hexagon edges (grey dots), and the hexagon
		vertex $h_{k+1}$ between them (black dot). Here the two interior radii
		and the two half-edges have the \emph{same} length $1/\sqrt2$, so all
		four sides are equal; the diagonals are $1$ and $1/\sqrt2$, and the
		interior angles alternate $60^{\circ},90^{\circ},60^{\circ},90^{\circ}$,
		with a $60^{\circ}$ angle at $O$ and at $h_{k+1}$. Each $\Pi_{k}$ is
		therefore congruent to the octahedral four edges $\Oct$ of
		\S\ref{subsec:terminology} --- a coincidence of shape, not of structure,
		since $\Oct$ is built from four \emph{whole} octahedron edges whereas
		$\Pi_{k}$ is built from two half-edges and two interior radii. The
		quadrilaterals are indexed by the \emph{faces} of the octahedron: the
		three vertices of $\Pi_{k}$ other than $O$ are coplanar in a single face
		$\pm x\pm y\pm z=1$, and the six faces so used omit the antipodal pair
		$-x+y-z=1$, $x-y+z=1$, whose normals lie along the $\langle111\rangle$
		body diagonal carrying a $\bar3$ inversion axis --- the direction of the
		flat points of $\PP$. This is why there are six and not eight. The shaded
		films are bilinear interpolants drawn to display the combinatorics; they
		are \emph{not} the minimal surfaces, whose exact parametrization is
		\cite[Eqs.~(4a)--(4c)]{GK}. Note the scale: $\Pi_{k}\cong\Oct$ exactly, 
	    whereas the corresponding Smyth contour of the disphenoid $T$ is $2\cdot\Oct$
        (Lemma~\ref{lem:contour}(b)).}
	\label{fig:sixquadsp}
\end{figure}

\begin{figure}[H]
	\vspace*{2cm}   
	\centering
	\includegraphics[width=0.65\textwidth]{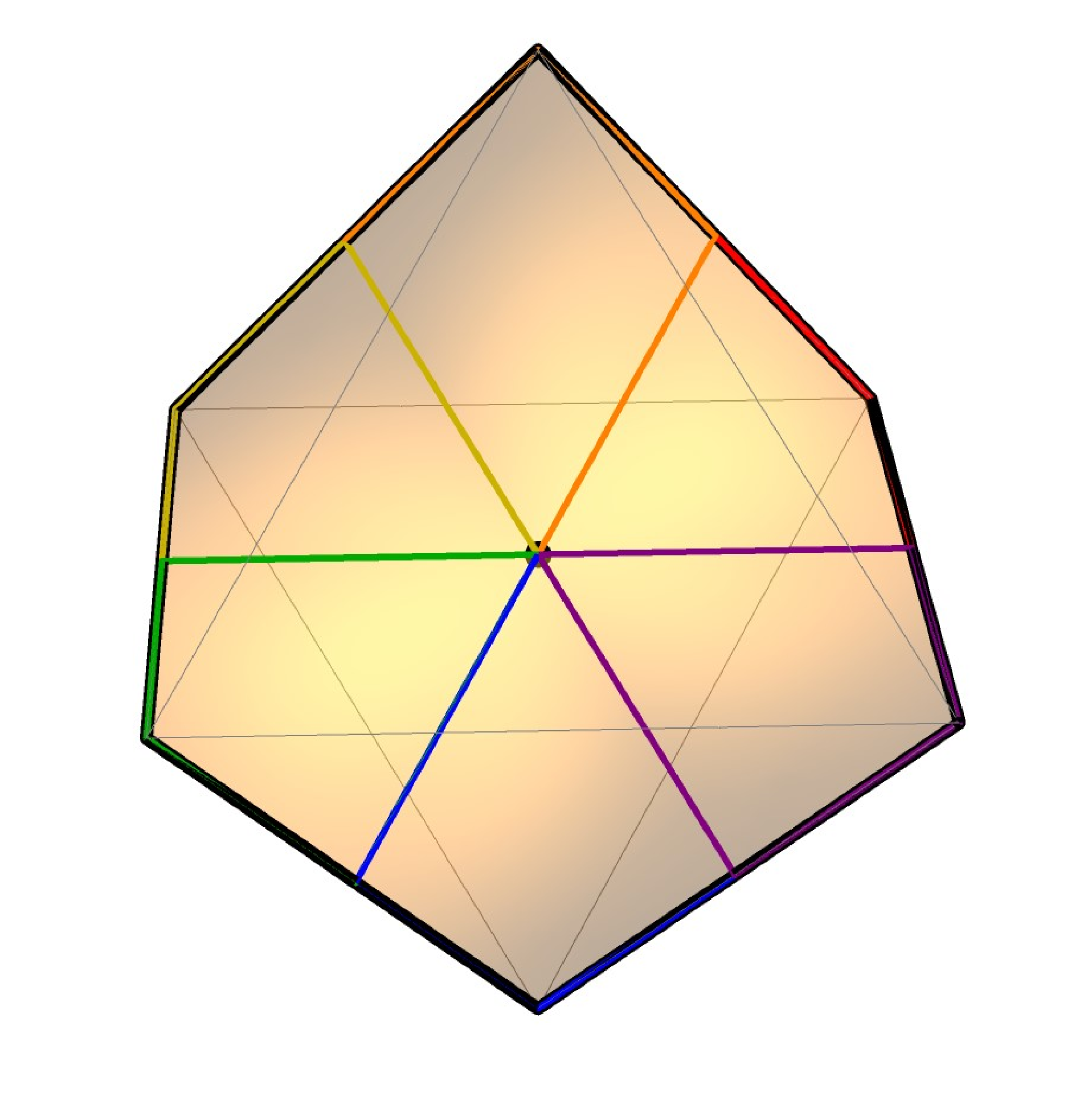}
	\caption{The same six quadrilaterals superimposed, in the colors of
		Figure~\ref{fig:sixquadsp}. All six meet at the octahedron center, each
		contributing its $60^{\circ}$ angle, for a total of $360^{\circ}$; each
		of the six hexagon edges is halved and shared between two neighbors.
		Every boundary arc is straight, hence geodesic, so Gauss--Bonnet gives
		$\int_{\Pi_{k}}\!K\,dA=-2\pi+\tfrac{5\pi}{3}=-\tfrac{\pi}{3}$ for each
		patch, and $-2\pi$ for the six-patch piece spanning the hexagon. Since a
		fundamental patch of $\PP$ or $\DD$ carries $-\pi/6$, each $\Pi_{k}$ here
		comprises \emph{two} fundamental patches, whereas each quadrilateral of
		Figure~\ref{fig:sixquadsd} comprises exactly one; the octahedral hexagon
		piece is thus twice as large, in patch count, as the cubic one. Again the
		films are bilinear stand-ins, not minimal surfaces.}
	\label{fig:sixquadsp1}
\end{figure}
\ \

\section*{Errata and addenda}
Brakke's numerical estimate $0.288674897531445$ of the Schoen I-WP area was mistaken (and easily corrected); in fact, an exact expression is known \cite{CK}:
\[               
\tfrac16 \sqrt3 \approx 0.288675134594813.
\]
The Neovius area likewise possesses an exact expression \cite{ADSN}:
\[
\tfrac14 K\!\left(\tfrac23\right) / K\!\left(\tfrac13\right) \approx 0.292539844325784
\]
and thus the two ``other'' Smyth disks in a quadrirectangular tetrahedron are well understood.  But a proof of the latter result does not appear in \cite{ADSN} nor seemingly elsewhere in the literature, as far as is known.

\clearpage

\subsection*{Acknowledgements}
I thank Ken Brakke for Surface Evolver computations and for his
disphenoid pages \cite{BrakkeDisph}, which supplied the decisive hint.
Anthropic Claude Opus 5.0 played a crucial role in turning 
my conjectures into theorems: to say that the AI model merely ``assisted'' 
would be an understatement.  It was used in an absolutely essential research 
capacity throughout every aspect of the paper.  No AI funding whatsoever for 
this study was received.  My interest in minimal surfaces began years ago 
(see \cite{FinchDP}).

\end{document}